\newcommand{\Hess}{\operatorname{Hess}}
\newcommand{\Ric}{\operatorname{Ric}}
\newcommand{\grad}{\operatorname{grad}}
\newcommand{\trace}{\operatorname{tr}}
\newcommand{\IR}{{\mathbb{R}}}
\newcommand{\IN}{{\mathbb{N}}}
\newcommand{\Q}{{\mathcal{Q}}}
\newcommand{\K}{{\mathcal{K}}}
\newcommand{\N}{{\mathcal{N}}}
\newcommand{\A}{\mathcal{A}}
\newcommand{\B}{\mathcal{B}}
\renewcommand*{\v}{\mathbf{v}}
\newcommand{\w}{\mathbf{w}}
\newcommand{\x}{\mathbf{x}}
\newcommand{\eChar}{\begin{enumerate}[(i)]}
\newcommand{\eCharR}{\begin{enumerate}[(a)]}
\newcommand{\eBr}{\begin{enumerate}[(1)]}
\newcommand{\vol}{\operatorname{vol}}
\newcommand{\diag}{\operatorname{diag}}
\DeclareMathOperator*{\argmax}{arg\,max}
\newcommand{\Abstract}
\title
{
Bakry-\'Emery curvature on graphs as an eigenvalue problem
}
\author[1]{David Cushing}
\author[2]{Supanat Kamtue}
\author[3]{Shiping Liu}
\author[2]{Norbert Peyerimhoff}
\affil[1]{School of Mathematics, Statistics and Physics, Newcastle University, Newcastle upon Tyne}
\affil[2]{Department of Mathematical Sciences, Durham University, Durham}
\affil[3]{School of Mathematical Sciences and CAS Wu Wen-Tsun Key Laboratory of Mathematics, University of Science and Technology of China, Hefei} 
\date{\today}
\date{\today}
\theoremstyle{plain}
\newtheorem{lemma}{Lemma}[section]
\newtheorem{theorem}[lemma]{Theorem}
\newtheorem{proposition}[lemma]{Proposition}
\newtheorem{corollary}[lemma]{Corollary}
\theoremstyle{definition}
\newtheorem{definition}[lemma]{Definition}
\newtheorem{remark}[lemma]{Remark}
\newtheorem{example}[lemma]{Example}
\numberwithin{equation}{section}
\begin{document}

\maketitle

\pagestyle{plain}

\begin{abstract}
 In this paper, we reformulate the Bakry-\'Emery curvature on a weighted graph in terms of the smallest eigenvalue of a rank one perturbation of the so-called curvature matrix using Schur complement. This new viewpoint allows us to show various curvature function properties in a very conceptual way. We show that the curvature, as a function of the dimension parameter, is analytic, strictly monotone increasing and strictly concave until a certain threshold after which the function is constant. Furthermore, we derive the curvature of the Cartesian product using the crucial observation that the curvature matrix of the product is the direct sum of each component. Our approach of the curvature functions of graphs can be employed to establish analogous results for the curvature functions of weighted Riemannian manifolds. Moreover, as an application, we confirm a conjecture (in a general weighted case) of the fact that the curvature does not decrease under certain graph modifications.
\end{abstract}

\section{Introduction and statements of result}

The Ricci curvature is a fundamental notion in Riemannian geometry. It is also an essential ingredient in Einstein's formulation of general relativity. Lower Ricci curvature bounds on a Riemannian manifold allow one to extract various global geometric and topological information \cite{Jost,Petersen}. The notion of Ricci curvature or its lower bound has been extended in various ways to general metric measure spaces. One of such extensions is Bakry-\'Emery's curvature dimension inequalities $CD(\K, N)$ \cite{Bakry94,BE85}. Bakry and \'Emery demonstrated that lower Ricci curvature bounds can be understood entirely in terms of the Laplace-Beltrami operator: On an n-dimensional Riemannain manifold $(M^n,g)$, for any $N\in [n,\infty]$, the Ricci curvature is lower bounded by $\K$ at a point $x\in M$ if and only if the inequality $CD(\K,N)$, which can be formulated purely in terms of the Laplace-Beltrami operator, holds at $x$ \cite[pp.93-94]{Bakry94}.

Bakry-\'Emery theory has been a source of spectacular mathematical results \cite{BGL}. In recent years, the discrete Bakry-\'Emery theory on graphs has become an active emerging research field. There are a growing number of articles investigating this theory, see e.g., \cite{CLY14,CKLLS20,CKKLP20,CKPW20,CLMP20,CLP20,Elworthy91,FS18,GL17,Hua19,HL17,HL19,HM17,HMW19,JL14,KM18,KMY17,KKRT16,LM18,LL15,LY10,LMP18,LMP17,LMPR19,LP18,L19,MST20,Mun17,Mun19,MR20,Salez211,Salez212,Schmuckenschlager98,SWZ19}.
Let us mention here important related works on non-linear discrete curvature dimension inequalities, see e.g., \cite{BHLLMY13,DKZ17,GLLY19,HLLY19,Mun18}.

A basic fact about the \emph{optimal} lower Ricci curvature bound at a point $x$ of a Riemannian manifold $(M^n, g)$ is that it is equal to the smallest eigenvalue of the Ricci curvature tensor at $x$ (when treated as a symmetric $(1,1)$-tensor) \cite[Section 3.14]{Petersen}.

In this paper, we provide an analogue of this basic fact in discrete Bakry-\'Emery theory. That is, we reformulate the \emph{optimal} lower curvature bound $\K$ in Bakry-\'Emery's curvature dimension inequality $CD(\K,N)$ at a vertex $x$ of a weighted graph as the smallest eigenvalue of a rank-one perturbation of the so-called \emph{curvature matrix} (see Theorem \ref{thm:eigenvalue_main}). The curvature matrix at $x$ is of size $m\times m$, where $m$ is the number of neighbours of $x$ in the graph. This might be surprising at first glance: Graphs are discrete and there are no way to define any curvature tensor directly. For instance, there are even no chain rule and the Laplacian is not a diffusion operator \cite{LY10,BHLLMY13}. We achieve our result and conceive the concept of curvature matrix by combing an idea of Schmuckenschl\"ager \cite{Schmuckenschlager98} with the trick of Schur complements \cite{BV04,Car86,Gal19}, see also \cite[Proposition 5.13]{CLP20}. This new viewpoint leads to a confirmation of \cite[Conjecture 6.13]{CLP20} concerning the monotonicity of the Bakry-\'Emery curvature under certain graph modifications.

We further study the Bakry-\'Emery curvature as a function of the dimension parameter. Building upon the new viewpoint, we study the shape of the Bakry-\'Emery curvature functions systematically, especially the relation between the shape of the function and the spectrum of the curvature matrix. Very interestingly, the curvature matrix of a Cartesian product of two graphs is simply the direct sum of the curvature matrix of each graph.
 We use this to prove that the curvature function of Cartesian product is the star product (see Definition \ref{def:star_product}) of the curvature function of each factor.

The method we developed is also applicable to the setting of weighted Riemannian manifolds. Our results about the curvature functions of graphs can be transferred to the weighted manifold setting straightforwardly. In particular, we derive an analogous result about the curvature functions of Cartesian products of weighted Riemannian manifolds.

In the sequel, we will survey our results in more detail.

Let $G=(V,w,\mu)$ be a weighted graph consisting of a vertex set $V$, a vertex measure $\mu:V \to \IR^{+}$, and an edge-weight function $w:V\times V \to \IR^{+}\cup\{0\}$ which is a symmetric function with $w_{xx}=0$ for all $x\in V$. Two vertices $x,y\in V$ are adjacent if and only if $w_{xy}>0$. The graph $G$ is assumed to be \emph{locally finite}, that is, each vertex has only finitely many neighbours. 
For $r\in \IN$, the \emph{combinatorial sphere} (resp. \emph{ball}) of radius $r$ centered at $x\in V$, denoted by $S_r(x)$ (resp. $B_r(x)$), is the set of all vertices whose minimum number of edges from $x$ is equal to (resp. less than or equal to) $r$. In particular, $S_1(x)$ contains all neighbours of $x$.

Furthermore, let $d_x := \sum_{y\in V} w_{xy}$ be the \emph{vertex degree} of $x$, and $p_{xy}:=\frac{w_{xy}}{\mu_x}$ be the \emph{transition rate} from $x$ to $y$. In the special case of $d_x=\mu_x$ (that is, $ \sum_{y\in V}p_{xy}=1$) for all $x\in V$, the terms $p_{xy}$ can be understood as transition probabilities of a reversible Markov chain.
Another special situation is a \emph{non-weighted} (or combinatorial) graph $G=(V,E)$ where $E$ is the set of edges (without loops and multiple edges), that is, $\mu\equiv 1$ and $w_{xy}=1$ iff $x$ is adjacent to $y$, and $w_{xy}=0$ otherwise.

The Laplacian $\Delta:C(V) \to C(V)$ (where $C(V)$ is the vector space of all functions $f:V\to \IR$) is given by
\[\Delta f(x):=\frac{1}{\mu_x}\sum_{y\in V}w_{xy}(f(y)-f(x))=\sum_{y\in V}p_{xy}(f(y)-f(x)).\]
The Laplacian associated to non-weighted graphs is also known as the \emph{non-normalised Laplacian}.

The Laplacian $\Delta$ gives rise to the symmetric bilinear forms $\Gamma$ and $\Gamma_2$, namely,
\begin{align*}
2\Gamma(f,g) &:= \Delta(fg) - f\Delta g - g\Delta f,\\
2\Gamma_2(f,g) &:= \Delta(\Gamma(f,g)) - \Gamma(f,\Delta g) - \Gamma(g,\Delta f),
\end{align*}
with additional notations $\Gamma (f):=\Gamma (f,f)$ and $\Gamma_2(f):=\Gamma_2(f,f)$.

These bilinear forms are important for the following Ricci curvature notion due to Bakry-\'Emery \cite{BE85}, which is motivated by a fundamental identity in Riemannian Geometry called Bochner's formula.

\begin{definition}[Bakry-\'Emery curvature]\label{defn:BEcurvature} Let $G=(V,w,\mu)$ be a locally finite weighted graph. Let $\K\in \mathbb{R}$ and $N\in (0,\infty]$. We say that a vertex $x\in V$ satisfies the Bakry-\'Emery's \emph{curvature-dimension inequality} $CD(\K,N)$, if for any $f:V\to \mathbb{R}$, we have
\begin{equation}\label{eq:CDineq}
	\Gamma_2(f)(x)\geq \frac{1}{N}(\Delta f(x))^2+\K\Gamma(f)(x),
\end{equation}
where $N$ is a dimension parameter and $\K$ is regarded as a lower Ricci curvature bound at $x$. The Bakry-\'Emery curvature, denoted by $\K(G,x;N)$, is then defined to be the largest $\K$ such that $x$ satisfies $CD(\K,N)$.
\end{definition}

The \emph{Bakry-\'Emery curvature function} of $x$, namely $\K_{G,x}(N):=\K(G,x;N)$ can be reformulated as the solution to the following semidefinite programming:
\begin{align} \label{eq:SDP_defn}
	&\text{maximize}\,\,\, K   \tag{$P$} \\
	&\text{subject to}\,\,\,\Gamma_2(x)-\frac{1}{N}\Delta(x)\Delta(x)^\top - K\Gamma(x) \succeq 0, \nonumber
\end{align}
where the symmetric matrices $\Gamma(x)$ and $\Gamma_2(x)$ correspond to the symmetric bilinear forms $\Gamma$ and $\Gamma_2$ at $x$. The explicit expression of these matrices is given in Appendix \ref{sect:appendix}. Here, $M\succeq 0$ (resp. $M\succ 0$) means $M$ is positive semidefinite (resp. positive definite). The above computing method has been studied by Schmuckenschl\"ager \cite{Schmuckenschlager98}, and later on in \cite{LP18}, \cite{LMP19} and \cite{CLP20}.

In this paper, we reformulate the above semidefinite programming problem as a smallest eigenvalue problem by employing the Schur complement of a square block matrix $M_{22}$ in $M= \begin{pmatrix}
		M_{11} & M_{12} \\
		M_{21} & M_{22}
	\end{pmatrix}$, namely $M/M_{22}:=M_{11}-M_{12}M_{22}^{-1}M_{21}$, applied to the matrix
\begin{equation*}
	\Gamma_2(x)_{\hat{1}}=\left(
	\begin{array}{cc}
		\Gamma_2(x)_{S_1,S_1} & \Gamma_2(x)_{S_1,S_2} \\
		\Gamma_2(x)_{S_2,S_1} & \Gamma_2(x)_{S_2,S_2} \\
	\end{array}
	\right).
\end{equation*}
Here the matrix $\Gamma_2(x)_{\hat{1}}$ refers to the principle submatrix of $\Gamma_2(x)$ obtained by removing its first row and column corresponding to the central vertex $x$. The matrix $\Gamma_2(x)_{S_i,S_j}$ refers to the submatrix of $\Gamma_2(x)$ whose rows and columns are indexed by the vertices of the combinatorial spheres $S_i(x)$ and $S_j(x)$.

We use the notation $Q(x):=\Gamma_2(x)_{\hat{1}} / \Gamma_2(x)_{S_2,S_2}$ for simplicity, and define
\begin{align} \label{eq:A_defn}
A_\infty(x) &:= 2\diag(\mathbf{v}_0(x))^{-1} Q(x) \diag(\mathbf{v}_0(x))^{-1},  \nonumber \\
A_N(x) &:= A_\infty(x) - \frac{2}{N}\mathbf{v}_0(x)\mathbf{v}_0(x)^\top,
\end{align}
where $\mathbf{v}_0(x) := (\sqrt{p_{xy_1}} \ \sqrt{p_{xy_2}} \ ... \ \sqrt{p_{xy_m}})^\top$ with $S_1(x)=\{y_1,y_2,...,y_m\}$ labelling the neighbours of $x$. Note that the matrices $Q(x), A_\infty(x), A_N(x)$ are all symmetric matrices, and that $A_N(x)$ is a rank one perturbation of $A_\infty(x)$. All our subsequent results are based on the following theorem.
\begin{theorem} \label{thm:eigenvalue_main}
Let $G=(V,w,\mu)$ be a weighted graph. For $x\in V$ and $N\in (0,\infty]$,
the Bakry-\'Emery curvature $\K_{G,x}(N)$ is the smallest eigenvalue of the symmetric matrix $A_N(x)$, that is,
\begin{equation*}
	\K_{G,x}(N) =  \lambda_{\min}(A_N(x)).
\end{equation*}
\end{theorem}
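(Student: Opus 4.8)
The plan is to transform the positive-semidefiniteness constraint defining $\K_{G,x}(N)$ in the program $(P)$ into the single matrix inequality $A_N(x) - K I \succeq 0$; once this is achieved the theorem follows at once, since the largest $K$ with $A_N(x) \succeq K I$ is exactly $\lambda_{\min}(A_N(x))$. Throughout, write $M(K) := \Gamma_2(x) - \frac{1}{N}\Delta(x)\Delta(x)^\top - K\Gamma(x)$ for the constraint matrix, indexed by the ball $B_2(x)$.

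First I would eliminate the central vertex. Because $\Gamma$, $\Gamma_2$ and $\Delta$ all annihilate constants, the all-ones vector satisfies $M(K)\mathbf{1} = 0$. Writing any $f$ as $f(x)\mathbf{1} + g$ with $g(x)=0$ gives $f^\top M(K) f = g^\top M(K) g$, which depends only on the entries of $g$ off $x$; hence $M(K)\succeq 0$ if and only if the principal submatrix $M(K)_{\hat 1}$ (with the $x$-row and column deleted) is positive semidefinite. Since deletion of a row and column is linear, $M(K)_{\hat 1} = \Gamma_2(x)_{\hat 1} - \frac1N \Delta(x)_{\hat 1}\Delta(x)_{\hat 1}^\top - K\Gamma(x)_{\hat 1}$, and I can read off its block structure relative to the splitting $S_1(x)\cup S_2(x)$. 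Using $\Gamma(f)(x)=\tfrac12\sum_y p_{xy}(f(y)-f(x))^2$ and $\Delta f(x)=\sum_y p_{xy}(f(y)-f(x))$, one checks that $\Gamma(x)_{\hat 1}$ and $\Delta(x)_{\hat 1}$ are supported on $S_1(x)$, with $\Gamma(x)_{S_1,S_1}=\tfrac12\diag(\mathbf{v}_0(x))^2$ and with $\Delta(x)_{\hat 1}$ equal to $\diag(\mathbf{v}_0(x))\mathbf{v}_0(x)$ on $S_1(x)$ and vanishing on $S_2(x)$. Consequently the blocks of $M(K)_{\hat 1}$ touching $S_2(x)$ coincide with those of $\Gamma_2(x)_{\hat 1}$ and are independent of $K$.

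The second step is the Schur complement. The key structural input is that the block $\Gamma_2(x)_{S_2,S_2}$ is positive definite (from the explicit formula it is diagonal with strictly positive entries, each vertex of $S_2(x)$ having a neighbour in $S_1(x)$). Since this block does not involve $K$, the Schur complement criterion gives $M(K)_{\hat 1}\succeq 0$ if and only if $M(K)_{\hat 1}/\Gamma_2(x)_{S_2,S_2}\succeq 0$. Because the off-diagonal blocks of $M(K)_{\hat 1}$ are exactly those of $\Gamma_2(x)_{\hat 1}$, this Schur complement equals
\[
Q(x) - \frac1N \diag(\mathbf{v}_0(x))\mathbf{v}_0(x)\mathbf{v}_0(x)^\top\diag(\mathbf{v}_0(x)) - \frac K2 \diag(\mathbf{v}_0(x))^2,
\]
where $Q(x)=\Gamma_2(x)_{\hat 1}/\Gamma_2(x)_{S_2,S_2}$ is precisely the matrix isolated in the statement.

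Finally I would conjugate by the positive-definite matrix $\diag(\mathbf{v}_0(x))^{-1}$, which preserves positive semidefiniteness, and multiply by $2$. Using the definitions $A_\infty(x)=2\diag(\mathbf{v}_0(x))^{-1}Q(x)\diag(\mathbf{v}_0(x))^{-1}$ and $A_N(x)=A_\infty(x)-\frac2N\mathbf{v}_0(x)\mathbf{v}_0(x)^\top$, the three terms turn into $A_\infty(x)$, $\frac2N\mathbf{v}_0(x)\mathbf{v}_0(x)^\top$ and $KI$ respectively, so the constraint becomes $A_N(x)-KI\succeq 0$. Therefore $x$ satisfies $CD(K,N)$ exactly when $K\le \lambda_{\min}(A_N(x))$, and taking the supremum yields $\K_{G,x}(N)=\lambda_{\min}(A_N(x))$. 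I expect the main obstacle to be the second step: verifying that $\Gamma_2(x)_{S_2,S_2}$ is positive definite and correctly identifying the block entries of $\Gamma_2(x)_{\hat 1}$ from the explicit $\Gamma_2$ formula, since the entire Schur-complement reduction — and in particular the emergence of $Q(x)$ — hinges on the invertibility of that block.
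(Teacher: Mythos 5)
Your proposal is correct and follows essentially the same route as the paper: Schmuckenschl\"ager's reduction to functions vanishing at $x$ (deleting the row and column of the central vertex), the Schur complement with respect to the $\Gamma_2(x)_{S_2,S_2}$ block to produce $Q(x)$, and conjugation by $\diag(\mathbf{v}_0(x))^{-1}$ to reach the constraint $A_N(x)-K\,\mathrm{Id}\succeq 0$. Your two added justifications --- that $M(K)\mathbf{1}=0$ forces $M(K)\succeq 0$ to be equivalent to $M(K)_{\hat 1}\succeq 0$, and that $\Gamma_2(x)_{S_2,S_2}$ is a diagonal matrix with entries $\tfrac14 p^{(2)}_{xz}>0$ and hence positive definite --- are both accurate and fill in details the paper leaves implicit.
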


Theorem \ref{thm:eigenvalue_main} is proved in Section \ref{sect:curvature_reformulation}. This concept of the curvature expression as eigenvalues was discussed in \cite[Section 5]{CLP20} in the special case that a vertex $x$ is $S_1$-out regular. Henceforth we will use the simplified notations $\mathbf{v}_0$, $Q$, $A_\infty$ and $A_N$ for the vector $\mathbf{v}_0(x)$ and the matrices $Q(x)$, $A_\infty(x)$ and $A_N(x)$, where $x$ is a fixed vertex of $G$. We may refer to the matrix $A_\infty=A_\infty(x)$ as \textbf{\emph{the curvature matrix}} of $x$.

The relation $\K_{G,x}(N) =  \lambda_{\min}(A_N)$ allows us to investigate various properties of the curvature function $\K_{G,x}: (0,\infty] \to \IR$. Some of the results here were already introduced in \cite{CLP20} in the case of non-weighted graphs, but this paper presents a unified and simplified approach to these results by employing the variational description of minimal eigenvalues via the Rayleigh quotient
\[ \lambda_{\min{}} (A_N) = \inf_{v\not=0} \frac{v^\top A_N v}{v^\top v}.\]

We first describe the shape of the curvature functions (see proofs in Section \ref{sect:curv_fct_properties}).
\begin{theorem} \label{thm:continuous_and_threshold}
Let $G = (V,w,\mu)$ be a weighted graph, and fix $x \in V$. Then the curvature function $\K_{G,x}: (0,\infty] \to \IR$ is continuous and there exists a unique threshold $N_1 \in (0,\infty]$ (possibly, $N_1 = \infty$) with the following properties:
\begin{itemize}
\item[(i)] $\K_{G,x}$ is analytic, strictly monotone increasing and strictly concave on $(0,N_1]$ with \\ $\lim_{N \to 0} \K_{G,x}(N) = -\infty$ and $\lim_{N \to N_1} \K_{G,x}(N) =: K_1 <  \infty$.
\item[(ii)] $\K_{G,x}$ is constant on $[N_1,\infty]$ and equal to $K_1$.
\end{itemize}
\end{theorem}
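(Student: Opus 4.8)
The plan is to pass to the reparametrisation $t := 2/N \in [0,\infty)$, which is an analytic, strictly decreasing bijection from $(0,\infty]$ onto $[0,\infty)$ (with $N=\infty$ corresponding to $t=0$), and to study the single-variable function
\[ F(t) := \lambda_{\min}\!\left(A_\infty - t\,\v_0\v_0^\top\right), \qquad \K_{G,x}(N) = F(2/N), \]
which is legitimate by Theorem \ref{thm:eigenvalue_main}. First I would record the structural properties of $F$ coming from the Rayleigh quotient $\lambda_{\min}(M)=\inf_{\|v\|=1}v^\top M v$: since $t\mapsto v^\top(A_\infty - t\v_0\v_0^\top)v$ is affine with slope $-(v^\top\v_0)^2\le 0$, the function $F$ is an infimum of affine functions, hence concave, and each is non-increasing, hence $F$ is non-increasing. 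Taking the competitor $v=\v_0/\|\v_0\|$ (note $\v_0\ne 0$ because every $p_{xy_i}>0$) gives $F(t)\le \v_0^\top A_\infty\v_0/\|\v_0\|^2 - t\|\v_0\|^2\to-\infty$, and continuity of $F$ is continuity of eigenvalues in the matrix entries.

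Next I would locate the threshold. Write $\lambda_1=\lambda_{\min}(A_\infty)$ with eigenspace $E_1$, so $F(0)=\lambda_1$, and set $t_1:=\inf\{t\ge0: F(t)<\lambda_1\}$, which is finite by the limit above; put $N_1:=2/t_1\in(0,\infty]$. Using continuity, monotonicity and concavity of $F$ with $F(0)=\lambda_1$, I would show $\{t:F(t)=\lambda_1\}=[0,t_1]$ and that $F$ is \emph{strictly} decreasing on $[t_1,\infty)$, since a concave non-increasing function cannot be locally constant at a value strictly below its maximum without violating monotonicity of its supergradient. To identify $t_1$ explicitly I would decompose $\v_0$ in an orthonormal eigenbasis $u_1,\dots,u_m$ of $A_\infty$ (eigenvalues $\lambda_1\le\cdots\le\lambda_m$, $c_i:=u_i^\top\v_0$) and apply the matrix determinant lemma: for $z<\lambda_1$, $z$ is an eigenvalue of $A_\infty - t\v_0\v_0^\top$ iff $\sum_i c_i^2/(\lambda_i-z)=1/t$. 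The left-hand side is strictly increasing in $z$ on $(-\infty,\lambda_1)$, so a root below $\lambda_1$ exists exactly when $1/t$ is below its limit as $z\uparrow\lambda_1$. This yields the dichotomy: if $\v_0$ has nonzero component in $E_1$ then $t_1=0$ (i.e.\ $N_1=\infty$), while if $\v_0\perp E_1$ then $t_1=1/\!\sum_{\lambda_i>\lambda_1} c_i^2/(\lambda_i-\lambda_1)\in(0,\infty)$.

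For the regularity statements I would use the secular equation again: for $t>t_1$ the root $z=F(t)<\lambda_1$ is simple and solves an analytic equation whose $z$-derivative $\sum_i c_i^2/(\lambda_i-z)^2$ is strictly positive, so the analytic implicit function theorem makes $F$ real-analytic on $(t_1,\infty)$. Analyticity up to the endpoint follows by analytic inversion of the secular equation around $z=\lambda_1$ when $t_1>0$ (the $z$-derivative there is again positive), and, when $t_1=0$, from analytic perturbation theory for the lowest eigenvalue branch, which is simple for small $t>0$; this gives analyticity of $\K_{G,x}$ on $(0,N_1]$. I would then transfer everything through $N=2/t$: continuity, the limits $\lim_{N\to0}\K_{G,x}=-\infty$ and $\lim_{N\to N_1}\K_{G,x}=\lambda_1=:K_1<\infty$, constancy on $[N_1,\infty]$, strict monotonicity (composition of the strictly decreasing $F$ with the strictly decreasing $t=2/N$), and analyticity are all immediate. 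The one point needing care is strict \emph{concavity} in $N$, since the reparametrisation is nonlinear; the clean observation is
\[ \frac{d^2}{dN^2}\,\K_{G,x}(N) = \frac{4}{N^4}\Big(F''(t) + N\,F'(t)\Big),\qquad t=2/N, \]
so the general concavity $F''\le 0$ together with the strict monotonicity $F'(t)<0$ on $t>t_1$ forces the bracket to be strictly negative. Thus strict concavity in $N$ holds even though $F$ need not be strictly concave in $t$.

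I expect the main obstacle to be the threshold analysis: pinning down that $F$ is exactly constant on $[0,t_1]$ and strictly decreasing thereafter (rather than strictly decreasing everywhere, or flattening again later), together with the one-sided analyticity at $t_1$. Both are handled by combining the concavity and monotonicity of $F$ with the secular equation and the projection of $\v_0$ onto $E_1$; the change of variables to $N$, which might appear to threaten concavity, is in fact the benign step thanks to the displayed identity.
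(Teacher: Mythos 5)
Your argument is correct, and it reaches the theorem by a genuinely different route than the paper. The paper derives concavity and monotonicity from the superadditivity $\lambda_{\min}(A+B)\ge\lambda_{\min}(A)+\lambda_{\min}(B)$ applied to $A_N=A_{N'}+\bigl(\tfrac{2}{N'}-\tfrac{2}{N}\bigr)\v_0\v_0^\top$, characterises $N_1$ as the first $N$ at which $\lambda_{\min}(A_N)$ fails to be simple (Lemma \ref{lem:notsimple_eigenvalue}: non-simplicity forces constancy on $[N,\infty]$), gets analyticity from the analytic implicit function theorem applied to the characteristic polynomial wherever the bottom eigenvalue is simple (Lemma \ref{lem:analytic_at_simple}), and obtains \emph{strictness} of concavity only indirectly: an analytic concave function that were affine on a subinterval of $(0,N_1)$ would be affine on all of it, contradicting $\K_{G,x}(N)\to-\infty$. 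You instead substitute $t=2/N$ and run the secular-equation (matrix determinant lemma) analysis of the rank-one update $A_\infty-t\v_0\v_0^\top$. This buys several things the paper does not make explicit: a closed formula for the threshold, namely $N_1=2\sum_{\lambda_i>\lambda_1}c_i^2/(\lambda_i-\lambda_1)$ when $\v_0\perp E_{\min}(A_\infty)$ and $N_1=\infty$ otherwise, which re-derives Proposition \ref{prop:v0_eigenvalue}(iii) en route; genuine one-sided analyticity at the endpoint $N_1$, a point Lemma \ref{lem:analytic_at_simple} does not literally cover since $\lambda_{\min}(A_{N_1})$ is not simple there; and a direct proof of strict concavity from $\K_{G,x}''(N)=\tfrac{4}{N^4}\bigl(F''(t)+NF'(t)\bigr)$ with $F''\le 0$ and $F'<0$, instead of the analyticity trick. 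Two facts you use implicitly should be stated: (a) by interlacing for subtraction of a positive semidefinite rank-one term, any eigenvalue of $A_N$ strictly below $\lambda_1=\lambda_{\min}(A_\infty)$ is automatically the smallest eigenvalue and is simple, which is what identifies the unique secular root below $\lambda_1$ with $F(t)$ and underpins the perturbation argument when $t_1=0$; (b) the strict decrease of $F$ on $(t_1,\infty)$ needs the one-line concavity argument that constancy on a subinterval would propagate back to $t=0$ and contradict $F(0)=\lambda_1>F(t)$. What you give up relative to the paper is the structural lemma that loss of simplicity at some $N'$ forces constancy on all of $[N',\infty]$, which the paper reuses elsewhere (e.g.\ in Corollary \ref{cor:non-smallest_eigen_positive}); both proofs are sound.
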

In fact, the threshold $N_1$ is the minimal $N\in (0,\infty]$ for which $\lambda_{\min}(A_N)$ is not simple. Another interesting threshold is given when the curvature function vanishes. Here we have the following result.

\begin{proposition} \label{cor: N_threshold}
Assume that $A_\infty \succ 0$ (that is $\K_{G,x}(\infty)>0$). Then there exists a unique $N_0\in (0,\infty)$ such that $\K_{G,x}(N_0)=0$, and it is given by
\[ N_0= 2\mathbf{v}_0^\top A_\infty^{-1}\mathbf{v}_0 = 2\sum_{i,j} \sqrt{p_{xy_i}p_{xy_i}} (A_\infty^{-1})_{ij}. \]
\end{proposition}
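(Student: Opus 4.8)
The plan is to separate the qualitative claim (existence and uniqueness of a zero) from the quantitative claim (the closed-form value), using Theorem~\ref{thm:continuous_and_threshold} for the former and a rank-one determinant computation for the latter. First I would record that the hypothesis $A_\infty \succ 0$ is precisely the statement $\lambda_{\min}(A_\infty) = \K_{G,x}(\infty) > 0$, so that the constant value $K_1$ attained by $\K_{G,x}$ on $[N_1,\infty]$ is strictly positive. By Theorem~\ref{thm:continuous_and_threshold}, the function $\K_{G,x}$ is continuous and strictly monotone increasing on $(0,N_1]$ with $\lim_{N\to 0}\K_{G,x}(N) = -\infty$ and $\lim_{N\to N_1}\K_{G,x}(N) = K_1 > 0$. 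The intermediate value theorem then yields a value $N_0 \in (0,N_1)$ with $\K_{G,x}(N_0) = 0$, and strict monotonicity on $(0,N_1]$ together with the fact that $\K_{G,x} \equiv K_1 > 0$ on $[N_1,\infty]$ makes this zero unique; in particular $N_0 < N_1 \le \infty$, so $N_0 \in (0,\infty)$ as required.

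To pin down the value, I would use that $\K_{G,x}(N_0) = \lambda_{\min}(A_{N_0}) = 0$ forces $A_{N_0} = A_\infty - \frac{2}{N_0}\mathbf{v}_0\mathbf{v}_0^\top$ to be singular, i.e.\ $\det A_{N_0} = 0$. Since $A_\infty$ is invertible, the matrix determinant lemma gives
\[
\det\!\left(A_\infty - \tfrac{2}{N}\,\mathbf{v}_0\mathbf{v}_0^\top\right) = \det(A_\infty)\left(1 - \tfrac{2}{N}\,\mathbf{v}_0^\top A_\infty^{-1}\mathbf{v}_0\right),
\]
and because $\det(A_\infty) > 0$ while $\mathbf{v}_0^\top A_\infty^{-1}\mathbf{v}_0 > 0$ (as $A_\infty^{-1} \succ 0$ and $\mathbf{v}_0 \neq 0$), this determinant vanishes at the single value $N = 2\,\mathbf{v}_0^\top A_\infty^{-1}\mathbf{v}_0$. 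Matching this with the unique zero $N_0$ obtained in the first step yields the claimed formula, and the second displayed expression is just the entrywise expansion of the quadratic form $\mathbf{v}_0^\top A_\infty^{-1}\mathbf{v}_0$.

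The one point requiring care is ensuring that the vanishing of $\det A_N$ corresponds to $\lambda_{\min}(A_N)$ reaching zero, rather than to some interior eigenvalue crossing zero. This is in fact already guaranteed by the uniqueness established above: the determinant has exactly one root in $(0,\infty)$, which must therefore coincide with $N_0$. Alternatively, one can argue directly via Cauchy interlacing, which I expect to be the only genuinely delicate bookkeeping step: subtracting the rank-one positive semidefinite matrix $\frac{2}{N}\mathbf{v}_0\mathbf{v}_0^\top$ from $A_\infty$ can lower only the smallest eigenvalue below $\lambda_{\min}(A_\infty) > 0$, while every other eigenvalue of $A_N$ stays $\ge \lambda_{\min}(A_\infty) > 0$; hence whenever $A_N$ is singular its zero eigenvalue is necessarily $\lambda_{\min}(A_N)$. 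Everything else is a direct application of the already-established shape of the curvature function together with the standard rank-one determinant identity.
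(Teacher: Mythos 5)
Your proof is correct and follows essentially the same route as the paper: the intermediate value theorem for existence, then $\det A_{N_0}=0$ combined with the matrix determinant lemma to pin down the value. The only difference is that you derive uniqueness from the strict monotonicity in Theorem~\ref{thm:continuous_and_threshold}, whereas the paper gets it for free from the determinant identity (any zero of $\K_{G,x}$ forces $\det A_{N_0}=0$, which has a unique root in $N$); your extra interlacing remark is sound but unnecessary, since the argument only uses the implication $\lambda_{\min}(A_{N_0})=0 \Rightarrow \det A_{N_0}=0$, never its converse.
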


Next we prove in Section \ref{sect:bounds} the following curvature bounds. The upper bound, in particular, plays an important role in our curvature analysis, where we study the situation when this upper bound is attained (called curvature sharpness; see the definition below). The notion of curvature sharpness was introduced \cite{CLP20} and studied in, e.g., \cite{CKPW20}.
\begin{theorem}[Upper and lower curvature bounds] \label{thm:lower_upper_bound}
	Let $G=(V,w,\mu)$ be a weighted graph. Then we have for $x\in V$ and $N\in (0,\infty]$,
	\begin{equation} \label{eq:lower_upper_bound}
		\K_{G,x}(\infty) - \frac{2}{N}\frac{d_x}{\mu_x} \le \K_{G,x}(N) \stackrel{(^*)}{\le} \K^{0}_{\infty}(x) - \frac{2}{N}\frac{d_x}{\mu_x}
	\end{equation}
	with \[\K^{0}_{\infty}(x) := \frac{\v_0^\top A_\infty \v_0}{\v_0^\top \v_0} = \frac{1}{2}\left( \frac{d_x}{\mu_x}+ 3\frac{\mu_x}{d_x} p_{xx}^{(2)} - \frac{\mu_x}{d_x} \sum_{z\in S_2(x)} p_{xz}^{(2)}\right). \] Here
	we use the notation $p_{uv}^{(2)}:=\sum_{w\in V} p_{uw}p_{wv}$. Moreover, a vertex $x\in V$ is called \textbf{\emph{$N$-curvature sharp}} iff $(^*)$ in \eqref{eq:lower_upper_bound} holds with equality.
\end{theorem}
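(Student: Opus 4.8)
The plan is to work entirely within the eigenvalue formulation of Theorem~\ref{thm:eigenvalue_main}, replacing $\K_{G,x}(N)$ by $\lambda_{\min}(A_N)$ and exploiting the rank-one structure $A_N = A_\infty - \frac{2}{N}\v_0\v_0^\top$ through the Rayleigh quotient $\lambda_{\min}(A_N) = \inf_{v\neq 0} \frac{v^\top A_N v}{v^\top v}$. A preliminary observation settles the common term appearing in both bounds: since $\v_0 = (\sqrt{p_{xy_1}},\dots,\sqrt{p_{xy_m}})^\top$, we have $\v_0^\top\v_0 = \sum_i p_{xy_i} = \frac{1}{\mu_x}\sum_{y}w_{xy} = \frac{d_x}{\mu_x}$, so that $\frac{2}{N}\frac{d_x}{\mu_x} = \frac{2}{N}\v_0^\top\v_0$.

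For the lower bound, I would take an arbitrary unit vector $v$ and decompose
\[ v^\top A_N v = v^\top A_\infty v - \frac{2}{N}(v^\top\v_0)^2. \]
Bounding the first summand below by $\lambda_{\min}(A_\infty) = \K_{G,x}(\infty)$ and the second by Cauchy--Schwarz, $(v^\top\v_0)^2 \le \|v\|^2\|\v_0\|^2 = \frac{d_x}{\mu_x}$, gives $v^\top A_N v \ge \K_{G,x}(\infty) - \frac{2}{N}\frac{d_x}{\mu_x}$ uniformly in $v$; taking the infimum over unit vectors yields the left-hand inequality of \eqref{eq:lower_upper_bound}. For the upper bound, the natural move is to test the Rayleigh quotient against the single vector $\v_0$:
\[ \lambda_{\min}(A_N) \le \frac{\v_0^\top A_N\v_0}{\v_0^\top\v_0} = \frac{\v_0^\top A_\infty\v_0}{\v_0^\top\v_0} - \frac{2}{N}\v_0^\top\v_0 = \K^{0}_{\infty}(x) - \frac{2}{N}\frac{d_x}{\mu_x}, \]
which is exactly $(^*)$. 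Since this is precisely the Rayleigh quotient of $A_N$ at $\v_0$, equality in $(^*)$ holds iff $\v_0$ is a minimiser, i.e.\ an eigenvector of $A_N$ associated with $\lambda_{\min}(A_N)$; this is what $N$-curvature sharpness records.

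It remains to derive the explicit closed form of $\K^{0}_{\infty}(x)$. Writing $A_\infty = 2\diag(\v_0)^{-1}Q\diag(\v_0)^{-1}$ and using the clean identity $\diag(\v_0)^{-1}\v_0 = \mathbf{1}$ (the all-ones vector on $S_1(x)$), one obtains $\v_0^\top A_\infty\v_0 = 2\,\mathbf{1}^\top Q\,\mathbf{1} = 2\sum_{i,j}Q_{ij}$, so that $\K^{0}_{\infty}(x) = \frac{2\mu_x}{d_x}\sum_{i,j}Q_{ij}$. I would then expand $\sum_{i,j}Q_{ij}$ from the Schur complement $Q = \Gamma_2(x)_{\hat{1}}/\Gamma_2(x)_{S_2,S_2}$ using the explicit entries of $\Gamma_2(x)$ recorded in Appendix~\ref{sect:appendix}. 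The decisive structural fact that makes this feasible is that the block $\Gamma_2(x)_{S_2,S_2}$ is \emph{diagonal} (there are no $f(z)f(z')$ cross terms for distinct $z,z'\in S_2(x)$ in $\Gamma_2(f)(x)$, since the relevant contributions come from sums of squares), so its inverse is trivial and the subtracted term in the Schur complement splits into a sum of rank-one contributions indexed by $z\in S_2(x)$.

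I expect the main obstacle to be exactly this bookkeeping: one must collect the full row- and column-sums of the $S_1$-block and of the off-diagonal blocks of $\Gamma_2(x)$ and show that after cancellation only the two-step transition quantities $p_{xx}^{(2)}$ and $\sum_{z\in S_2(x)}p_{xz}^{(2)}$ survive, with the stated coefficients. The cleanest route is probably to track which contributions produce the term $\frac{d_x}{\mu_x}$, which produce $p_{xx}^{(2)}$ (return-to-$x$ walks $x \to y \to x$ through $S_1(x)$), and which produce $\sum_{z} p_{xz}^{(2)}$ (the diagonal $S_2$-block entries $\tfrac14 p_{xz}^{(2)}$), rather than manipulating the matrices entrywise.
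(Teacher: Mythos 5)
Your argument is correct and follows essentially the same route as the paper: the lower bound via the superadditivity of the Rayleigh quotient (your Cauchy--Schwarz step is exactly the paper's $\sup_v \frac{v^\top \v_0\v_0^\top v}{v^\top v}=\v_0^\top\v_0$), the upper bound by testing the Rayleigh quotient at $\v_0$, and the closed form for $\K^{0}_{\infty}(x)$ by reducing to $2\sum_{i,j}Q(x)_{y_iy_j}$ and exploiting the diagonality of $\Gamma_2(x)_{S_2,S_2}$, which is precisely the content of the paper's Lemma~\ref{lem:K0infty}. The bookkeeping you defer does close as you predict (the Schur correction sums over $i,j$ to $\sum_{z}p_{xz}^{(2)}$ and combines with the out-degree terms to give the coefficient $-1$), so no gap remains.
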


The next proposition clarifies the relation between curvature sharpness and the appearance of the following shapes of the curvature function $\K_{G,x}$:
\begin{itemize}
\item $\K_{G,x}(N)= c - \frac{2}{N}\frac{d_x}{\mu_x}$ (with a constant $c\in \IR$) for all $N$ near $0$, and
\item $\K_{G,x}(N)$ is constant for $N$ near $\infty$.
\end{itemize}

\begin{proposition} \label{prop:curv_sharp_shape}
If $x$ is $N_1$-curvature sharp for some $N_1\in (0,\infty]$, it is also $N$-curvature sharp for all $N \in (0,N_1]$. If $x$ is $N_1$-curvature sharp for a maximally chosen $N_1$, then this $N_1$ is the threshold mentioned in Theorem \ref{thm:continuous_and_threshold}, and hence $\K_{G,x}(N)=\K^{0}_{\infty}(x) - \frac{2}{N}\frac{d_x}{\mu_x}$ for all $N\in (0,N_1]$ and $\K_{G,x}$ is constant on $[N_1,\infty]$. Conversely, if $\K_{G,x}(N) = c - \frac{2}{N}\frac{d_x}{\mu_x}$ for some constant $c\in \IR$ on some nontrivial interval $(N',N'')$, then $x$ is $N''$-curvature sharp.
\end{proposition}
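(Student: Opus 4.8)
The plan is to translate every occurrence of ``$N$-curvature sharp'' into the eigenvalue language of Theorem \ref{thm:eigenvalue_main}. The starting observation is that the upper bound $(^*)$ in Theorem \ref{thm:lower_upper_bound} is exactly the Rayleigh quotient of $A_N$ at the test vector $\v_0$: since $\v_0^\top \v_0 = \sum_i p_{xy_i} = d_x/\mu_x$, one checks
\[ \frac{\v_0^\top A_N \v_0}{\v_0^\top \v_0} = \frac{\v_0^\top A_\infty \v_0}{\v_0^\top \v_0} - \frac{2}{N}\v_0^\top \v_0 = \K^{0}_{\infty}(x) - \frac{2}{N}\frac{d_x}{\mu_x}. \]
Using $\lambda_{\min}(A_N) = \inf_{v\neq 0} v^\top A_N v / v^\top v$, I conclude that $x$ is $N$-curvature sharp if and only if $\v_0$ realizes this infimum, which for a symmetric matrix is equivalent to $\v_0$ being an eigenvector of $A_N$ for the eigenvalue $\lambda_{\min}(A_N)$. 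This reformulation is the backbone of the whole argument.

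Next I would record a structural fact that decouples the $N$-dependence. Since $A_N \v_0 = A_\infty \v_0 - \tfrac{2}{N}(\v_0^\top \v_0)\v_0$, the vector $\v_0$ is an eigenvector of $A_N$ for one $N$ iff it is an eigenvector of $A_\infty$, and this property is then independent of $N$; if $A_\infty \v_0 = \mu_0 \v_0$ (so necessarily $\mu_0 = \K^{0}_{\infty}(x)$), then $A_N \v_0 = (\mu_0 - \tfrac{2}{N}\|\v_0\|^2)\v_0$. Moreover, the rank-one perturbation $-\tfrac2N \v_0\v_0^\top$ annihilates $\v_0^\perp$, so every $A_\infty$-eigenvector orthogonal to $\v_0$ stays an $A_N$-eigenvector with unchanged eigenvalue. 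Hence, whenever $\v_0$ is an $A_\infty$-eigenvector, the spectrum of $A_N$ consists of $\mu_0 - \tfrac2N\|\v_0\|^2$ together with the fixed eigenvalues of $A_\infty$ on $\v_0^\perp$, whose minimum I denote $\mu_*$. As $N\mapsto \mu_0 - \tfrac2N\|\v_0\|^2$ is strictly increasing, the set of $N$ at which it is $\le \mu_*$, i.e. at which $\v_0$ gives $\lambda_{\min}(A_N)$ and $x$ is $N$-curvature sharp, is an interval $(0,N^*]$. This yields the first assertion (sharpness at $N_1$ propagates to all $N\le N_1$) and pins down the maximal sharpness value $N^*$.

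For the second assertion I take $N_1 = N^*$ maximal. Sharpness on $(0,N_1]$ gives $\K_{G,x}(N) = \K^{0}_{\infty}(x) - \tfrac{2}{N}\tfrac{d_x}{\mu_x}$ directly. At $N=N_1$ we have $\mu_0 - \tfrac{2}{N_1}\|\v_0\|^2 = \mu_*$, so $\lambda_{\min}(A_{N_1})$ is attained both by $\v_0$ and by a $\mu_*$-eigenvector in $\v_0^\perp$ and is therefore not simple, whereas for $N<N_1$ it equals $\mu_0 - \tfrac2N\|\v_0\|^2 < \mu_*$ and is simple; by the characterization stated just after Theorem \ref{thm:continuous_and_threshold}, $N_1$ is precisely the threshold. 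For $N\ge N_1$ one has $\mu_0 - \tfrac2N\|\v_0\|^2 \ge \mu_*$, so $\lambda_{\min}(A_N)=\mu_*$ is constant, giving constancy on $[N_1,\infty]$. The degenerate case $\mu_0\le \mu_*$ (i.e. $N_1=\infty$) is handled identically.

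The main obstacle is the converse, where only the functional form of $\K_{G,x}$ on an interval is assumed and one must extract that the minimizing eigenvector is forced to be $\v_0$. I would first locate the interval: as $c - \tfrac2N\tfrac{d_x}{\mu_x}$ is strictly increasing while $\K_{G,x}$ is constant on $[N_1,\infty]$, the interval $(N',N'')$ must lie in $(0,N_1)$, where $\lambda_{\min}(A_N)$ is simple and hence admits an analytic unit eigenvector $v(N)$. Differentiating $\lambda_{\min}(A_N)=v(N)^\top A_N v(N)$ by the Hellmann–Feynman formula gives $\tfrac{d}{dN}\lambda_{\min}(A_N)=\tfrac{2}{N^2}(v(N)^\top \v_0)^2$, while differentiating the assumed expression gives $\tfrac{2}{N^2}\|\v_0\|^2$. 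Equating forces $(v(N)^\top \v_0)^2 = \|\v_0\|^2 = \|v(N)\|^2\|\v_0\|^2$, so equality in Cauchy–Schwarz yields $v(N)\parallel \v_0$ on $(N',N'')$. Thus $\v_0$ is a $\lambda_{\min}$-eigenvector of $A_N$ there, i.e. $x$ is $N$-curvature sharp throughout $(N',N'')$; passing to the limit $N\uparrow N''$, with $\lambda_{\min}(A_N)$ and the Rayleigh quotient at $\v_0$ both continuous, gives $N''$-curvature sharpness. The delicate points to verify carefully are the simplicity and analyticity of $v(N)$ on the interval and the continuity argument at the endpoint $N''$.
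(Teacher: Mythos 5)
Your argument is correct. The forward half (sharpness at $N_1$ propagates down to all $N\le N_1$, identification of the maximal $N_1$ with the threshold, and the resulting shape of $\K_{G,x}$) is essentially the paper's own proof: both reduce $N$-curvature sharpness to $\mathbf{v}_0\in E_{\min}(A_N)$, observe that $\mathbf{v}_0$ is an $A_N$-eigenvector for one $N$ iff it is an $A_\infty$-eigenvector, and then read everything off the explicit spectrum $\{\mu_0-\tfrac{2}{N}\|\mathbf{v}_0\|^2\}\cup\sigma(A_\infty|_{\mathbf{v}_0^\perp})$. Where you genuinely diverge is the converse. The paper stays with the Rayleigh quotient: it writes $A_{N''}=A_{N'}+(\tfrac{2}{N'}-\tfrac{2}{N''})\mathbf{v}_0\mathbf{v}_0^\top$, bounds $\lambda_{\min}(A_{N''})$ from above by $\lambda_{\min}(A_{N'})+(\tfrac{2}{N'}-\tfrac{2}{N''})\mathbf{v}_0^\top\mathbf{v}_0$ term by term, and notes that the assumed functional form forces equality in this superadditivity estimate, which can only happen if $\mathbf{v}_0$ lies in $E_{\min}(A_{N'})$ and hence in $E_{\min}(A_{N''})$. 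You instead localize $(N',N'')$ inside $(0,N_1)$ where $\lambda_{\min}(A_N)$ is simple, differentiate via Hellmann--Feynman to get $\tfrac{d}{dN}\lambda_{\min}(A_N)=\tfrac{2}{N^2}(\mathbf{v}_0^\top v(N))^2$, and extract $v(N)\parallel\mathbf{v}_0$ from the equality case of Cauchy--Schwarz, finishing with continuity at $N''$. Both routes are valid; the paper's is more elementary (no differentiability of the eigenvector is needed, only the infimum characterization of $\lambda_{\min}$ and continuity at the endpoints), whereas yours leans on the standard but unproved-in-the-paper fact that a simple eigenvalue of an analytic symmetric family admits an analytic normalized eigenvector. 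In exchange, your derivative identity $\tfrac{d}{dN}\lambda_{\min}(A_N)=\tfrac{2}{N^2}(\mathbf{v}_0^\top v(N))^2$ is a slightly stronger piece of information: it quantifies how far the curvature function is from the curvature-sharp upper bound at every $N$ below the threshold, not just under the hypothesis of exact equality on an interval.
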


The following proposition provides insights into relations between curvature sharpness and the spectrum of the curvature matrix $A_\infty$.
\begin{proposition} \label{prop:v0_eigenvalue}
Let $G=(V,w,\mu)$ be a weighted graph and fix a vertex $x\in V$. Denote $E_{\min{}}(A_\infty)$ to be the minimal eigenspace of $A_\infty$.
\begin{itemize}
\item[(i)] $\mathbf{v}_0$ is an eigenvector of $A_\infty$ if and only if $x$ is $N_1$-curvature sharp for some $N_1\in (0,\infty]$.
\item[(ii)] $\mathbf{v}_0 \in E_{\min{}}(A_\infty)$ if and only if $x$ is $\infty$-curvature sharp.
\item[(iii)] $\mathbf{v}_0$ is perpendicular to $E_{\min{}}(A_\infty)$ if and only if $\K_{G,x}$ is constant on $[N_1, \infty]$ for some $N_1<\infty$.
\end{itemize}
\end{proposition}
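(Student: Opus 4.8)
The plan is to recast each of the three equivalences in terms of the Rayleigh quotient $v \mapsto v^\top A_N v / v^\top v$, whose infimum equals $\lambda_{\min}(A_N) = \K_{G,x}(N)$ by Theorem~\ref{thm:eigenvalue_main}, and which attains this minimum exactly on $E_{\min}(A_N)$. I would first record two elementary observations. Since $\mathbf{v}_0^\top \mathbf{v}_0 = \sum_i p_{xy_i} = d_x/\mu_x$, a one-line computation gives
\[ \frac{\mathbf{v}_0^\top A_N \mathbf{v}_0}{\mathbf{v}_0^\top \mathbf{v}_0} = \frac{\mathbf{v}_0^\top A_\infty \mathbf{v}_0}{\mathbf{v}_0^\top \mathbf{v}_0} - \frac{2}{N}\,\mathbf{v}_0^\top \mathbf{v}_0 = \K^{0}_{\infty}(x) - \frac{2}{N}\frac{d_x}{\mu_x}, \]
so the upper bound $(^*)$ in Theorem~\ref{thm:lower_upper_bound} is precisely the Rayleigh quotient of $A_N$ at $\mathbf{v}_0$; consequently $x$ is $N$-curvature sharp if and only if $\mathbf{v}_0 \in E_{\min}(A_N)$. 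Secondly, from $A_N \mathbf{v}_0 = A_\infty \mathbf{v}_0 - \frac{2}{N}\frac{d_x}{\mu_x}\mathbf{v}_0$ it follows that $\mathbf{v}_0$ is an eigenvector of $A_N$ if and only if it is an eigenvector of $A_\infty$, for every $N$, the perturbation merely shifting the eigenvalue in the $\mathbf{v}_0$-direction.

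Part (ii) is then immediate: $\infty$-curvature sharpness means $\lambda_{\min}(A_\infty) = \K^{0}_{\infty}(x) = \mathbf{v}_0^\top A_\infty \mathbf{v}_0/\mathbf{v}_0^\top\mathbf{v}_0$, which by the variational characterisation of $\lambda_{\min}$ holds exactly when $\mathbf{v}_0 \in E_{\min}(A_\infty)$ (the first observation with $N=\infty$). For part (i), if $x$ is $N_1$-sharp then $\mathbf{v}_0 \in E_{\min}(A_{N_1})$ is an eigenvector of $A_{N_1}$, hence of $A_\infty$ by the second observation. Conversely, if $A_\infty \mathbf{v}_0 = \mu \mathbf{v}_0$, then $\mathbf{v}_0$ is an eigenvector of every $A_N$ with eigenvalue $\mu - \frac{2}{N}d_x/\mu_x$; completing $\mathbf{v}_0/\|\mathbf{v}_0\|$ to an orthonormal eigenbasis of $A_\infty$, the other eigenvalues of $A_N$ coincide with those of $A_\infty$ on $\mathbf{v}_0^\perp$ and are independent of $N$, while the $\mathbf{v}_0$-eigenvalue tends to $-\infty$ as $N \to 0$. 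Hence for small enough $N_1>0$ the eigenvalue along $\mathbf{v}_0$ is the smallest, giving $\mathbf{v}_0 \in E_{\min}(A_{N_1})$ and $N_1$-sharpness.

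Part (iii) is where I expect the real work to lie. By Theorem~\ref{thm:continuous_and_threshold} and continuity, $\K_{G,x}$ is constant on $[N_1,\infty]$ for some finite $N_1$ if and only if $\lambda_{\min}(A_N) = \lambda_{\min}(A_\infty)$ for some finite $N$, since $A_N \preceq A_\infty$ forces $\lambda_{\min}(A_N) \le \lambda_{\min}(A_\infty)$ and $\lambda_{\min}(A_N) \to \lambda_{\min}(A_\infty)$ as $N\to\infty$. Assuming constancy, I would argue by contraposition: if $\mathbf{v}_0 \not\perp E_{\min}(A_\infty)$, let $P$ be the orthogonal projection onto $E_{\min}(A_\infty)$ and $u = P\mathbf{v}_0/\|P\mathbf{v}_0\|$; then $u^\top A_N u = \lambda_{\min}(A_\infty) - \frac{2}{N}\|P\mathbf{v}_0\|^2 < \lambda_{\min}(A_\infty)$ for every finite $N$, so the function never reaches $\lambda_{\min}(A_\infty)$. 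For the converse, assuming $\mathbf{v}_0 \perp E_{\min}(A_\infty)$, I would show $\lambda_{\min}(A_N) \ge \lambda_{\min}(A_\infty)$ for all large $N$: splitting a unit vector as $w = w_0 + w_1$ with $w_0 \in E_{\min}(A_\infty)$, $w_1 \perp E_{\min}(A_\infty)$, and using $w^\top\mathbf{v}_0 = w_1^\top\mathbf{v}_0$ together with the gap to the second-smallest eigenvalue $\lambda'$ of $A_\infty$, one obtains
\[ w^\top A_N w \ge \lambda_{\min}(A_\infty)\|w_0\|^2 + \Big(\lambda' - \tfrac{2}{N}\|\mathbf{v}_0\|^2\Big)\|w_1\|^2 \ge \lambda_{\min}(A_\infty) \]
once $N \ge 2\|\mathbf{v}_0\|^2/(\lambda' - \lambda_{\min}(A_\infty))$. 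The delicate points are checking that $\mathbf{v}_0 \perp E_{\min}(A_\infty)$ with $\mathbf{v}_0 \ne 0$ forces $A_\infty$ to have a strictly larger second eigenvalue $\lambda'$ (so the threshold is finite), and controlling the cross term via $(w^\top\mathbf{v}_0)^2 = (w_1^\top\mathbf{v}_0)^2 \le \|w_1\|^2\|\mathbf{v}_0\|^2$; this spectral-gap estimate carries the essential content of the statement.
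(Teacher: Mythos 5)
Your proposal is correct. For parts (i) and (ii) you follow essentially the same route as the paper: curvature sharpness at $N$ is identified with $\mathbf{v}_0 \in E_{\min{}}(A_N)$ via the Rayleigh quotient, the forward direction of (i) uses that an eigenvector of $A_N$ is an eigenvector of $A_\infty$ (and vice versa), and the converse diagonalises $A_N$ in an eigenbasis of $A_\infty$ containing $\mathbf{v}_0$; the paper additionally records the explicit threshold $N_1 = 2\mathbf{v}_0^\top\mathbf{v}_0/(\lambda-\lambda_1)$, whereas you only observe that the $\mathbf{v}_0$-eigenvalue drops below the rest for small $N$, which suffices. Where you genuinely diverge is part (iii): the paper's combined proof of Propositions \ref{prop:v0_eigenvalue} and \ref{prop:curv_sharp_shape} never explicitly addresses (iii) (it stops after the converse statement of Proposition \ref{prop:curv_sharp_shape}), and the natural route within the paper's framework would be through Lemmas \ref{lem:notsimple_eigenvalue} and \ref{lem:threshold_nonsimp} on non-simple minimal eigenvalues. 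Your argument instead works directly with the orthogonal decomposition $w = w_0 + w_1$ relative to $E_{\min{}}(A_\infty)$: the contrapositive of the forward direction via the test vector $P\mathbf{v}_0/\|P\mathbf{v}_0\|$ is clean and correct, and the converse via the spectral-gap estimate $w^\top A_N w \ge \lambda_{\min{}}(A_\infty)\|w_0\|^2 + (\lambda' - \frac{2}{N}\|\mathbf{v}_0\|^2)\|w_1\|^2$, combined with $A_N \preceq A_\infty$, yields constancy on $[N_1,\infty]$ for the explicit threshold $N_1 = 2\|\mathbf{v}_0\|^2/(\lambda'-\lambda_{\min{}}(A_\infty))$. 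The two points you flag as delicate are indeed the ones to check, and both go through: $\mathbf{v}_0 \ne 0$ (its entries are $\sqrt{p_{xy_i}}>0$) rules out $E_{\min{}}(A_\infty) = \IR^m$ when $\mathbf{v}_0 \perp E_{\min{}}(A_\infty)$, so the gap $\lambda' - \lambda_{\min{}}(A_\infty)$ is strictly positive, and the cross term is controlled by Cauchy--Schwarz exactly as you say. In short, your treatment of (iii) is self-contained and supplies an argument the paper leaves implicit.
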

The proofs of Propositions \ref{prop:curv_sharp_shape} and \ref{prop:v0_eigenvalue} are provided in Section \ref{sect:spectral_relation}.

\begin{remark} \label{rem:example_not curvature_sharp}
If $\mathbf{v}_0$ is an eigenvector of $A_\infty$ corresponding to a non-smallest eigenvalue of $A_\infty$, then $\mathbf{v}_0$ is perpendicular to $E_{\min{}}(A_\infty) $. The converse is not true; a counterexample is the non-weighted Cartesian product $P_3\times P_2$, discussed in Example \ref{ex:Emin_perp_v0}. In this example,  $\mathbf{v}_0$ is perpendicular to $E_{\min{}}(A_\infty) $ but it is not an eigenvector of $A_\infty$, and its curvature function $\K_{G,x}$ is strictly increasing and strictly concave (but not curvature sharp) on $(0,N_1]$ and constant on $[N_1, \infty]$.
\end{remark}

In Section \ref{sect:Cartesian}, we discuss an important property of the curvature matrix $A_\infty$, that is, the curvature matrix of the the Cartesian product of two graphs is simply the direct sum of the curvature matrices of each graph.

\begin{definition}[weighted Cartesian product]
Given two weighted graphs $G,G'$ and two fixed positive numbers $\alpha,\beta\in \IR^{+}$, the weighted Cartesian product $G\times_{\alpha,\beta} G'$ is defined with the following weight function and vertex measure: for $x,y\in G$ and $x',y'\in G'$,
\begin{align*}
w_{(x,x')(y,x')} &:= \alpha w_{xy} \mu_{x'},\\
w_{(x,x')(x,y')} &:= \beta w_{x'y'} \mu_{x},\\
\mu_{(x,x')} &:=\mu_x \mu_{x'}.
\end{align*}
\end{definition}

The parameters $\alpha$ and $\beta$ serve two purposes.
\begin{enumerate}
\item In the case of non-weighted graphs $G$ and $G'$ (i.e.,  $\mu \equiv 1$ and $w\in \{0,1\}$), the choice of $\alpha=\beta=1$ gives the usual Cartesian product graph $G\times G'$.
\item In the case of $G$ and $G'$ representing Markov chains (i.e., when $\sum_{y} w_{xy}=\mu_x$ and $\sum_{y'} w_{x'y'}=\mu_{x'}$), the choice of $\alpha+\beta=1$ gives the weighted product $G\times_{\alpha,\beta} G$ which represents the random walk with probability $\alpha$ and $\beta$ following horizontal and vertical edges, respectively.
\end{enumerate}

\begin{theorem} \label{thm:cartesian_curv_matrix}
The curvature matrix of the product $G\times_{\alpha,\beta} G'$ is the weighted direct sum of the curvature matrices $G$ and $G'$:
\begin{align*}
A^{G\times_{\alpha,\beta} G'}_\infty((x,x'))= \alpha A^{G}_\infty(x) \oplus \beta A^{G'}_\infty(x').
\end{align*}
\end{theorem}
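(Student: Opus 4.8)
My plan is to reduce the identity to a clean block decomposition of $\Gamma_2$ on the product followed by a single cancellation lemma for the mixed second-order terms. Write $\Delta^{(1)},\Gamma^{(1)},\Gamma_2^{(1)}$ for the operators of $G$ applied in the first variable (second variable frozen), and $\Delta^{(2)},\Gamma^{(2)},\Gamma_2^{(2)}$ for those of $G'$ in the second variable. From the definition of $G\times_{\alpha,\beta}G'$ one reads off $p_{(x,x')(y,x')}=\alpha p^G_{xy}$ and $p_{(x,x')(x,y')}=\beta p^{G'}_{x'y'}$, so that $\Delta^{G\times_{\alpha,\beta}G'}=\alpha\Delta^{(1)}+\beta\Delta^{(2)}$. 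Substituting this into $2\Gamma(f)=\Delta(f^2)-2f\Delta f$ and $2\Gamma_2(f)=\Delta\Gamma(f)-2\Gamma(f,\Delta f)$ and expanding, the first-order identity $\Gamma=\alpha\Gamma^{(1)}+\beta\Gamma^{(2)}$ holds with no cross term, while at second order one obtains
\[2\Gamma_2^{G\times_{\alpha,\beta}G'}(f)=2\alpha^2\Gamma_2^{(1)}(f)+2\beta^2\Gamma_2^{(2)}(f)+\alpha\beta\,C(f),\]
where $C(f)=\Delta^{(2)}\Gamma^{(1)}f+\Delta^{(1)}\Gamma^{(2)}f-2\Gamma^{(1)}(f,\Delta^{(2)}f)-2\Gamma^{(2)}(f,\Delta^{(1)}f)$ collects the mixed terms.

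Next I would track, at the base point $(x,x')$, which vertices each summand actually sees. Labelling the horizontal and vertical spheres $H_i=\{(y,x'):y\in S_i^G(x)\}$, $V_i=\{(x,y'):y'\in S_i^{G'}(x')\}$, and the diagonal set $D=\{(y,y'):y\in S_1^G(x),\,y'\in S_1^{G'}(x')\}\subseteq S_2$, a short inspection of the four operators shows: $\alpha^2\Gamma_2^{(1)}$ is supported on $\{(x,x')\}\cup H_1\cup H_2$ and there equals $\alpha^2$ times the corresponding entries of $\Gamma_2^G(x)$; $\beta^2\Gamma_2^{(2)}$ is supported on $\{(x,x')\}\cup V_1\cup V_2$ and equals $\beta^2\Gamma_2^{G'}(x')$; and the mixed form $C$ is supported on $\{(x,x')\}\cup H_1\cup V_1\cup D$ (no operator combination reaches $H_2$ or $V_2$ through $C$). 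In particular $H_2$, $V_2$ and $D$ are pairwise uncoupled, so the $S_2$-block is block diagonal,
\[\Gamma_2((x,x'))_{S_2,S_2}=\alpha^2\,\Gamma_2^G(x)_{S_2,S_2}\ \oplus\ \beta^2\,\Gamma_2^{G'}(x')_{S_2,S_2}\ \oplus\ B_D,\]
with $B_D$ the $D$-block coming from $\alpha\beta C$.

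Because the $S_2$-block is block diagonal, the Schur complement $Q((x,x'))=\Gamma_2((x,x'))_{\hat 1}/\Gamma_2((x,x'))_{S_2,S_2}$ splits into three independent corrections, one for each of $H_2$, $V_2$, $D$. The $H_2$-correction is $\alpha^2$ times the pure $G$-data and contributes exactly $\alpha^2 Q^G(x)$ in the $H_1$-block; likewise the $V_2$-correction contributes $\beta^2 Q^{G'}(x')$ in the $V_1$-block, and these two do not interact. Everything therefore reduces to the claim that the $D$-correction together with the $S_1$-part of $\alpha\beta C$ vanishes, i.e. that the Schur complement of $C$ with respect to its $D$-variables is the zero matrix. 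I expect this cancellation to be the main obstacle; it is the graph analogue of the fact that mixed Hessian entries can be set to zero without affecting the curvature. I would prove it by showing that $C(f)(x,x')$ depends on $f$ only through the mixed differences $f(y,y')-f(y,x')-f(x,y')+f(x,x')$. Equivalently, the polarised form $C(\cdot,\cdot)$ vanishes whenever one argument is additive, $(x,x')\mapsto g(x)+h(x')$; by bilinearity it suffices to test $g$ alone and $h$ alone, and each case collapses because $\Gamma^{(1)},\Delta^{(1)}$ annihilate functions of the second variable and vice versa, exactly as in the computation $C(g+h)=0$. Granting this, for any prescribed $S_1$-values one chooses the diagonal values so that all mixed differences vanish, forcing $C=0$; as $C$ is nonnegative in the diagonal directions this is the minimum, so the $D$-Schur complement is zero and $Q((x,x'))=\alpha^2 Q^G(x)\oplus\beta^2 Q^{G'}(x')$.

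It remains to pass from $Q$ to $A_\infty$. Since the horizontal and vertical transition rates are $\alpha p^G_{xy}$ and $\beta p^{G'}_{x'y'}$, the weight vector of the product factors as $\mathbf{v}_0((x,x'))=\sqrt{\alpha}\,\mathbf{v}_0^G(x)\oplus\sqrt{\beta}\,\mathbf{v}_0^{G'}(x')$, whence $\diag(\mathbf{v}_0((x,x')))^{-1}=\tfrac{1}{\sqrt\alpha}\diag(\mathbf{v}_0^G(x))^{-1}\oplus\tfrac{1}{\sqrt\beta}\diag(\mathbf{v}_0^{G'}(x'))^{-1}$. Substituting into $A_\infty=2\diag(\mathbf{v}_0)^{-1}Q\diag(\mathbf{v}_0)^{-1}$, the factor $\alpha^2$ from the horizontal block of $Q$ meets the two factors $1/\sqrt\alpha$ and produces $\alpha\,A_\infty^G(x)$, and likewise $\beta^2$ becomes $\beta\,A_\infty^{G'}(x')$, while the off-diagonal blocks stay zero. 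This yields $A_\infty^{G\times_{\alpha,\beta}G'}((x,x'))=\alpha A_\infty^G(x)\oplus\beta A_\infty^{G'}(x')$, as claimed. Finally, where $\Gamma_2((x,x'))_{S_2,S_2}$ fails to be invertible I would read every Schur complement as a partial minimisation of $\Gamma_2(f)(x,x')$ over the $S_2$-values (equivalently, via the Moore--Penrose inverse); block diagonality of the $S_2$-block makes the pseudoinverse block diagonal as well, so the three-way splitting is unaffected.
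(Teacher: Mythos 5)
Your proof is correct, but it follows a genuinely different route from the paper's. The paper proves the theorem by brute force: it plugs the product transition rates into the explicit entry formulas \eqref{eq:Q_ondiag}--\eqref{eq:Q_offdiag} for $Q$, checks entry by entry that the horizontal and vertical blocks rescale to $\alpha^2 Q^G(x)$ and $\beta^2 Q^{G'}(x')$, and kills the mixed entries by the pointwise cancellation $2\alpha\beta p_{xy}p_{x'y'}-4\tfrac{(\alpha\beta p_{xy}p_{x'y'})^2}{2\alpha\beta p_{xy}p_{x'y'}}=0$, which rests on the two combinatorial facts that $(y,x')$ and $(x,y')$ share exactly one $S_2$-neighbour $(y,y')$ and that $(y,y')$ has exactly two $S_1$-neighbours. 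You instead work at the level of operators, writing $\Gamma_2^{G\times_{\alpha,\beta}G'}=\alpha^2\Gamma_2^{(1)}+\beta^2\Gamma_2^{(2)}+\tfrac{\alpha\beta}{2}C$, observing that the $S_2$-supports of the three pieces are the disjoint sets $H_2$, $V_2$, $D$ so that the Schur complement decouples, and reducing everything to the single lemma that the mixed form $C$ has vanishing Schur complement over $D$. That lemma is right, and your justification is essentially complete: once you know $C(f)(x,x')$ depends on $f$ only through the mixed differences $\delta_{yy'}$, the form must equal its own $D$-block (a positive diagonal matrix, with entries $\tfrac14 p^{(2)}_{(x,x')(y,y')}=\tfrac{\alpha\beta}{2}p_{xy}p_{x'y'}$) evaluated at $\delta$, so it is globally nonnegative and its partial minimum over the $D$-variables is exactly zero --- indeed one can compute directly that $C(f)(x,x')=\sum_{y,y'}p_{xy}p_{x'y'}\delta_{yy'}^2$. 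I would only ask you to state that step in this form rather than as ``nonnegative in the diagonal directions,'' since diagonal nonnegativity alone would not rule out the minimum being strictly negative. Your closing remark about the Moore--Penrose inverse is harmless but unnecessary: $\Gamma_2(x)_{S_2,S_2}$ is always diagonal with entries $p^{(2)}_{xz}/4>0$. What the two approaches buy: the paper's computation is self-contained given the appendix formulas and verifies the claim with no auxiliary lemma; yours isolates the conceptual reason the cross terms disappear (the mixed form is a sum of squares of discrete mixed second differences, the graph analogue of the vanishing of mixed Hessian entries on a Riemannian product) and would transfer more directly to other product-type constructions and to the manifold setting of Section \ref{sect:weighted_manifolds}.
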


As a consequence, we give a new proof (in a more general case of weighted graphs) of the fact that the curvature function of a Cartesian product is the star product of the curvature function in each factor (see Theorem \ref{thm:cartesian_curv_fct} below).

\begin{definition}[star product {\cite[Definition 7.1]{CLP20}}] \label{def:star_product}
Let $f_1,f_2: (0,\infty] \to \IR$ be continuous and monotone increasing functions with $\lim_{t\to 0} f_1(t)=\lim_{t\to 0} f_2(t)=-\infty$.
Then the function $f_1\ast f_2: (0,\infty] \to \IR$ is defined by
\begin{equation*}
f_1 \ast f_2 (t) := f_1(t_1) = f_2(t_2),
\end{equation*}
where $t_1+t_2=t$ such that $f_1(t_1)=f_2(t_2)$.
\end{definition}
Let us remark also that the star product is commutative and associative \cite[Propositions 7.5 and 7.6]{CLP20}.

\begin{theorem} \label{thm:cartesian_curv_fct}
The curvature function of the product $G\times_{\alpha,\beta} G'$ satisfies the following inequalities:
\begin{equation*}
\min \{ \alpha\K_{G,x}, \beta\K_{G',x'}\} \le  \K_{G\times_{\alpha,\beta} G',(x,x')}\le \max \{ \alpha\K_{G,x}, \beta\K_{G',x'}\}.
\end{equation*}
Consequently, we have $\K_{G\times_{\alpha,\beta} G',(x,x')} = (\alpha\K_{G,x}) \ast (\beta\K_{G',x'})$.
\end{theorem}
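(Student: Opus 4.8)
The plan is to reduce everything to the spectral picture of Theorem \ref{thm:eigenvalue_main} and to exploit the block structure coming from Theorem \ref{thm:cartesian_curv_matrix}. First I would compute $\mathbf{v}_0$ for the product at $(x,x')$: since the horizontal and vertical transition rates are $p_{(x,x')(y,x')} = \alpha p_{xy}$ and $p_{(x,x')(x,y')} = \beta p_{x'y'}$, one gets $\mathbf{v}_0^{G\times G'} = (\sqrt{\alpha}\,(\mathbf{v}_0^G)^\top, \sqrt{\beta}\,(\mathbf{v}_0^{G'})^\top)^\top$. Combined with Theorem \ref{thm:cartesian_curv_matrix}, this exhibits $A_N$ of the product as the rank-one downward perturbation
\[ A_N^{G\times G'} = \bigl(\alpha A_\infty^G \oplus \beta A_\infty^{G'}\bigr) - \tfrac{2}{N}\mathbf{w}\mathbf{w}^\top, \qquad \mathbf{w} = \mathbf{v}_0^{G\times G'}, \]
whose diagonal blocks are precisely $\alpha A_N^G$ and $\beta A_N^{G'}$, but which carries an off-diagonal coupling $-\tfrac{2}{N}\sqrt{\alpha\beta}\,\mathbf{v}_0^G(\mathbf{v}_0^{G'})^\top$. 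Taming this coupling is the heart of the matter.

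For the lower bound I would fix an arbitrary splitting $N = N_1 + N_2$ with $N_1,N_2 > 0$ and a test vector $v = (a^\top, b^\top)^\top$, writing the quadratic form with $s := \sqrt{\alpha}\,(\mathbf{v}_0^G)^\top a$ and $t := \sqrt{\beta}\,(\mathbf{v}_0^{G'})^\top b$. The coupling contributes $-\tfrac{2}{N}(s+t)^2$, and the elementary inequality $\tfrac{(s+t)^2}{N} \le \tfrac{s^2}{N_1} + \tfrac{t^2}{N_2}$ (Cauchy--Schwarz) lets me split it across the two factors, yielding $v^\top A_N^{G\times G'} v \ge a^\top(\alpha A_{N_1}^G)a + b^\top(\beta A_{N_2}^{G'})b$. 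Since $\lambda_{\min}(\alpha A_{N_1}^G) = \alpha\K_{G,x}(N_1)$ and $\lambda_{\min}(\beta A_{N_2}^{G'}) = \beta\K_{G',x'}(N_2)$ by Theorem \ref{thm:eigenvalue_main} and positivity of $\alpha,\beta$, the Rayleigh quotient is bounded below by $\min\{\alpha\K_{G,x}(N_1), \beta\K_{G',x'}(N_2)\}$. This is the left-hand inequality, valid for every splitting.

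For the matching bound I would pass to the balancing splitting $N = N_1^\ast + N_2^\ast$ for which $\alpha\K_{G,x}(N_1^\ast) = \beta\K_{G',x'}(N_2^\ast) =: \lambda^\ast$; such a splitting exists by the continuity and monotonicity of Theorem \ref{thm:continuous_and_threshold} (this is exactly the well-definedness of the star product of Definition \ref{def:star_product}), and both $N_i^\ast$ are finite when $N$ is. Taking unit minimal eigenvectors $a^\ast$ of $\alpha A_{N_1^\ast}^G$ and $b^\ast$ of $\beta A_{N_2^\ast}^{G'}$, I would test $A_N^{G\times G'}$ against $v = (a^{\ast\top}, \gamma b^{\ast\top})^\top$ and pick the scalar $\gamma$ so that the Cauchy--Schwarz step becomes an equality, i.e.\ $s/N_1^\ast = t/N_2^\ast$. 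For this $v$ the form collapses to $\lambda^\ast\|v\|^2$, giving $\K_{G\times G',(x,x')}(N) \le \lambda^\ast$. Together with the lower bound at the same splitting this forces $\K_{G\times G',(x,x')}(N) = \lambda^\ast = (\alpha\K_{G,x}) \ast (\beta\K_{G',x'})(N)$; the stated $\min$/$\max$ sandwich for an arbitrary splitting then follows at once from monotonicity of the factor functions.

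The main obstacle I anticipate is this upper-bound step in its degenerate regimes: if $(\mathbf{v}_0^{G'})^\top b^\ast = 0$ (equivalently $(\mathbf{v}_0^G)^\top a^\ast = 0$), that is, when a factor sits at or beyond its curvature-sharpness threshold in the sense of Proposition \ref{prop:v0_eigenvalue}, the scalar $\gamma$ cannot be chosen freely and one must instead use a one-sided test vector (e.g.\ $\gamma = 0$) and check directly that the form still equals $\lambda^\ast\|v\|^2$. The boundary value $N = \infty$ must be treated separately, but there $A_\infty^{G\times G'} = \alpha A_\infty^G \oplus \beta A_\infty^{G'}$ makes $\K_{G\times G',(x,x')}(\infty) = \min\{\alpha\K_{G,x}(\infty), \beta\K_{G',x'}(\infty)\}$ immediate, matching the star product at infinity. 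Verifying the $\mathbf{v}_0$ computation and the finiteness of the $N_i^\ast$ are the remaining routine points.
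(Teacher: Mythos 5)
Your proposal is correct and follows essentially the same route as the paper: the paper's abstract Theorem \ref{thm:eigen_product} establishes exactly your two bounds, with your Cauchy--Schwarz splitting $\frac{(s+t)^2}{N_1+N_2}\le \frac{s^2}{N_1}+\frac{t^2}{N_2}$ being the same perfect-square identity the paper uses to show its coupling matrix $J$ is positive semidefinite, and your choice of $\gamma$ (to force equality in that step) matching the paper's choice of the coefficients $c_1,c_2$ that annihilate the square term. The only cosmetic difference is that you work at the balancing splitting and rederive the star-product identity directly (handling the degenerate case $(\mathbf{v}_0^{G'})^\top b^\ast=0$ by hand, as you rightly flag), whereas the paper proves the $\min$/$\max$ sandwich for every splitting and then invokes the abstract characterization of the star product from \cite{CLP20}.
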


In Section \ref{sect:weighted_manifolds}, we discuss analogous results in the smooth setting of weighted manifolds. Consider a weighted Riemannian manifold $(M^n, g, e^{-V} d{\vol}_g)$ of dimension $n$, with a metric $g$, the volume element $d{\vol}_g$, and a smooth real function $V:M\to \IR$.
The Bakry-\'Emery curvature function $\K_{M,V,x}: (0,\infty]\to \mathbb{R}$ at $x\in M$ is defined as
\[ \K_{M,V,x}(N):= \inf_{v\in S_x(M)} \Ric_{N+n,V}(v,v),  \qquad \forall N\in (0,\infty],\]
where $S_x(M)$ is the space of unit tangent vectors at $x$, and \[\Ric_{N,V} := \Ric + \Hess V - \frac{\grad V \otimes \grad V}{N-n},\,\,\forall N\in (n, \infty], \] where we follow the notation in \cite[Equation (14.36)]{Villani}.
We define $\K_{M,V,x}$ as a function on the interval $(0,\infty]$ instead of on $(n,\infty]$ to make it compatible with the curvature functions of graphs.
All the results (Theorems \ref{thm:continuous_and_threshold}, \ref{thm:lower_upper_bound} and \ref{thm:cartesian_curv_fct}, Propositions \ref{cor: N_threshold}, \ref{prop:curv_sharp_shape} and \ref{prop:v0_eigenvalue}) have analogous counterparts in the manifold case. For example, the upper bound is $\K_{M,V,x}(N) \le \K^0_\infty(x) - \frac{1}{N}\|\grad V\|^2$ with
\begin{equation*}
\K^0_\infty(x)=\Ric_x\left(\frac{\grad V}{\|\grad V\|}, \frac{\grad V}{\|\grad V\|}\right)+\frac{\grad V(x)}{\|\grad V(x)\|}\left(\|\grad V\|\right).
\end{equation*}
We also show that the Cartesian product of two weighted manifolds $(M_i^{n_i}, g_i, e^{-V_i} d{\vol}_{g_i})$, $i\in \{1,2\}$ has the Bakry-\'Emery curvature function
\begin{equation}
\K_{M_1\times M_2 , V_1\oplus V_2, (x_1,x_2)} = \K_{M_1,V_1,x_1} \ast \K_{M_2,V_2,x_2}.
\end{equation}
Furthermore, we may define the \emph{generalised scalar curvature} for a weighted Riemannian manifold $(M,g, e^{-V}d{\vol})$ to be the trace of the Ricci tensor
\begin{equation} \label{eq:scalar_curv_defn}
S_{M,V,x}(N) := \trace \Ric_{N+n,V},\,\,\forall\,\,N\in (0,\infty].
\end{equation}
In Example \ref{ex:2-sphere}, we investigate curvature sharpness properties of weighted $2$-spheres and derive explicit formulas for the curvatures $\K_{M,V,x}$ and $S_{M,V,x}$.
At the end of Section \ref{sect:weighted_manifolds}, we also discuss an interesting connection between curvature sharpness and Ricci solitons (see Theorem \ref{thm:soliton}).

In Section \ref{sect:geom_property}, we prove the following curvature results related to the geometric structure of $B_2(x)$.  First, we define for a graph $G$ an analogue to the generalised scalar curvature, namely
$$S_{G,x}(N):= \trace A_N, \,\,\forall\,\,N\in (0,\infty].$$ In contrast to Ricci curvature, this scalar curvature can be formulated explicitly for non-weighted graphs
in terms of the vertex degrees, the number of triangles and the size of $S_2(x)$.

Let us denote $S_1(x)=\{y_1,\ldots,y_{d_x}\}$. At a vertex $x$ in a non-weighted graph, we define the out-degree $d^+_{y_i}$ of $y_i\in S_1(x)$  to be the number of neighbours of $y_i$ in $S_2(x)$ and the in-degree $d^-_{z}$ of $z\in S_2(x)$ to be the number of neighbors of $z$ in $S_1(x)$.
\begin{proposition} \label{prop:scalar_curv}
Let $G=(V,w,\nu)$ be a non-weighted graph. Then
\begin{itemize}
  \item [(i)] The curvature matrix at a vertex $x\in V$ is given by
  \begin{equation}\label{eq:CurMatrixNonweighted}
  A_\infty(x)=-2\Delta_{S_1(x)}-2\Delta_{S_1'(x)}+J+\frac{3-d_x}{2}\mathrm{Id}-\frac{1}{2}\diag((d_{y_1}^+,\ldots, d_{y_{d_x}}^+)^\top),
  \end{equation}
  where $J$ is the $d_x\times d_x$ all-one matrix, $\Delta_{S_1(x)}$ is the Laplacian matrix of the subgraph of $G$ induced by $S_1(x)$ and $\Delta_{S_1'(x)}$ is the Laplacian matrix of the weighted graph with vertex set $S_1(x)$, vertex measure $\mu\equiv 1$, and edge weights $w_{y_iy_j}^{S_1'(x)}=\sum_{z\in S_2(x)}\frac{w_{y_iz}w_{y_jz}}{d^-_{z}}$ for $i\neq j$ and $0$ otherwise.
  \item [(ii)]The generalised scalar curvature at a vertex $x\in V$ is given by
\begin{equation} \label{eq:scalar_compute_nonweight}
S_{G,x}(N) = d_x-\frac{d_x^2}{2} + \frac{3}{2}\sum_{y\in S_1(x)} d_y + \sharp_{\triangle}(x) - 2|S_2(x)| - \frac{2}{N}d_x,
\end{equation}
where $\sharp_{\triangle}(x)$ denotes the number of triangles ($3$-cycles) containing the vertex $x$. In particular, for a $d$-regular tree, we have $S_{G,x}(N)=d(3-d) - \frac{2d}{N}$.
\end{itemize}
\end{proposition}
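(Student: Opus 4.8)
The plan is to prove (i) by directly evaluating the curvature matrix $A_\infty(x)=2\diag(\mathbf{v}_0)^{-1}Q\diag(\mathbf{v}_0)^{-1}$ and then to obtain (ii) by taking its trace. For a non-weighted graph one has $\mu\equiv 1$ and $p_{xy_i}=1$ for every neighbour $y_i$, so $\mathbf{v}_0=(1,\dots,1)^\top$ is the all-ones vector and $\diag(\mathbf{v}_0)^{-1}=\mathrm{Id}$; hence $A_\infty(x)=2Q(x)=2\bigl(\Gamma_2(x)_{\hat 1}/\Gamma_2(x)_{S_2,S_2}\bigr)$ and the entire task is to compute this Schur complement. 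First I would read off the entries of $\Gamma_2(x)$ on $B_2(x)$ from the explicit formulas of Appendix \ref{sect:appendix}. The key structural observation is that, for distinct $z,z'\in S_2(x)$, the quadratic form $\Gamma_2(f)(x)$ contains no product $f(z)f(z')$: each summand of $\Delta\Gamma(f)(x)$ and of $\Gamma(f,\Delta f)(x)$ couples an $S_2$-value only to an $S_1$-value or to itself. Consequently $\Gamma_2(x)_{S_2,S_2}$ is diagonal, its entry at $z$ being a positive multiple of the in-degree $d_z^-$, while the $(y_i,z)$ entry of $\Gamma_2(x)_{S_1,S_2}$ is a multiple of $w_{y_iz}$.

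Because $\Gamma_2(x)_{S_2,S_2}$ is diagonal, the Schur correction $-\Gamma_2(x)_{S_1,S_2}\,\Gamma_2(x)_{S_2,S_2}^{-1}\,\Gamma_2(x)_{S_2,S_1}$ splits into a sum of rank-one matrices, one per $z\in S_2(x)$, whose $(y_i,y_j)$ entry is proportional to $w_{y_iz}w_{y_jz}/d_z^-$. Summing over $z$ reproduces exactly the weights $w^{S_1'(x)}_{y_iy_j}=\sum_{z}w_{y_iz}w_{y_jz}/d_z^-$ of the auxiliary weighted graph $S_1'(x)$, which after reinstating the diagonal assembles into the term $-2\Delta_{S_1'(x)}$. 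The remaining $\Gamma_2(x)_{S_1,S_1}$ block (restricted to $f(x)=0$) then supplies the adjacency and degree data of the induced subgraph on $S_1(x)$, giving $-2\Delta_{S_1(x)}$, together with the all-ones matrix $J=\mathbf{1}\mathbf{1}^\top$ (reflecting the $(\Delta f(x))^2$-type self-coupling that appears via $\Gamma(f,\Delta f)(x)$), the scalar term $\tfrac{3-d_x}{2}\mathrm{Id}$, and the out-degree correction $-\tfrac12\diag(d^+_{y_1},\dots,d^+_{y_{d_x}})$. Matching all of these constants to produce precisely \eqref{eq:CurMatrixNonweighted} is the bookkeeping-heavy core of the argument, and I expect the main obstacle to be pinning down the exact numerical coefficients in the $\Gamma_2$ entries so that the Schur division yields exactly $w_{y_iz}w_{y_jz}/d_z^-$ with overall factor $-2$.

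For (ii) I would use $A_N=A_\infty-\tfrac{2}{N}\mathbf{v}_0\mathbf{v}_0^\top$, so that $S_{G,x}(N)=\trace A_N=\trace A_\infty-\tfrac{2}{N}\|\mathbf{v}_0\|^2$; since $\|\mathbf{v}_0\|^2=\sum_i p_{xy_i}=d_x$, this accounts for the term $-\tfrac{2}{N}d_x$. It then remains to trace the five summands of \eqref{eq:CurMatrixNonweighted}. With the paper's Laplacian convention $\Delta=W-D$ the trace of a Laplacian block equals minus the sum of its degrees, so $\trace\Delta_{S_1(x)}=-2\sharp_{\triangle}(x)$ (edges inside $S_1(x)$ being in bijection with triangles through $x$) and $\trace\Delta_{S_1'(x)}=-\sum_{z\in S_2(x)}(d_z^--1)=-\bigl(\sum_i d^+_{y_i}-|S_2(x)|\bigr)$, while $\trace J=d_x$, $\trace\bigl(\tfrac{3-d_x}{2}\mathrm{Id}\bigr)=\tfrac{d_x(3-d_x)}{2}$ and $\trace\bigl(-\tfrac12\diag(d^+)\bigr)=-\tfrac12\sum_i d^+_{y_i}$. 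Finally I would substitute the identity $\sum_{y\in S_1(x)}d_y=d_x+2\sharp_{\triangle}(x)+\sum_i d^+_{y_i}$ (each $y\in S_1(x)$ is adjacent to $x$, to vertices inside $S_1(x)$ whose total count over $y$ is $2\sharp_{\triangle}(x)$, and to $d^+_y$ vertices of $S_2(x)$) to eliminate the edge count $\sum_i d^+_{y_i}$; the sum then collapses to \eqref{eq:scalar_compute_nonweight}. The $d$-regular tree is a one-line check: $\sharp_{\triangle}=0$, $\sum_{y\in S_1(x)}d_y=d^2$ and $|S_2(x)|=d(d-1)$, giving $S_{G,x}(N)=d(3-d)-\tfrac{2d}{N}$.
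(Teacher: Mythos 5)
Your proposal is correct and follows essentially the same route as the paper: the paper's proof simply specializes the general weighted formulas \eqref{eq:curMatrixLaplacian} and \eqref{eq:scalar_compute} from Appendix \ref{sect:appendix} to $\mu\equiv 1$ and $p_{uv}\in\{0,1\}$, and those formulas are obtained exactly as you describe --- the diagonal $\Gamma_2(x)_{S_2,S_2}$ block makes the Schur correction a sum over $z\in S_2(x)$ of rank-one matrices with entries proportional to $w_{y_iz}w_{y_jz}/d_z^-$, yielding $-2\Delta_{S_1'(x)}$, while the $S_1\times S_1$ block supplies the remaining terms. The coefficient bookkeeping you defer in part (i) is precisely the content of \eqref{eq:Q_ondiag}--\eqref{eq:Q_offdiag} and \eqref{eq:curMatrixLaplacian}, and your trace computation in part (ii), including the identity $\sum_{y\in S_1(x)}d_y=d_x+2\sharp_{\triangle}(x)+\sum_i d^+_{y_i}$ and the tree check, is accurate.
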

 It follows from \eqref{eq:scalar_compute_nonweight} that the scalar curvature is larger in the presence of more triangles or a smaller two-sphere. Secondly, we provide a sufficient criterion for curvature sharpness.
\begin{theorem} \label{thm:curv_sharp_criterion}
Let $G=(V,w,\mu)$ be a weighted graph. A vertex $x\in V$ is $N$-curvature sharp for some $N\in(0,\infty]$ if the following two homogeneity properties of $x$ are satisfied:
\begin{itemize}
	\item $x$ is $S_1$-in regular: $p^{-}(y)=p_{yx}$ is independent of $y\in S_1(x)$,
	\item $x$ is $S_1$-out regular: $p^{+}(y)=\sum_{z\in S_2(x)} p_{yz}$ is independent of $y\in S_1(x)$.
\end{itemize}
\end{theorem}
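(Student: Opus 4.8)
The plan is to reduce the claim to the single statement that $\mathbf{v}_0$ is an eigenvector of the curvature matrix $A_\infty$, and then to verify this by a direct evaluation of the row sums of $Q$ that is made tractable by the block structure of $\Gamma_2(x)_{\hat 1}$. By Proposition \ref{prop:v0_eigenvalue}(i), $x$ is $N_1$-curvature sharp for some $N_1 \in (0,\infty]$ as soon as $\mathbf{v}_0$ is an eigenvector of $A_\infty$. Since $A_\infty = 2\diag(\mathbf{v}_0)^{-1} Q \diag(\mathbf{v}_0)^{-1}$ and $\diag(\mathbf{v}_0)^{-1}\mathbf{v}_0 = \mathbf{1}$, we have $A_\infty \mathbf{v}_0 = 2\diag(\mathbf{v}_0)^{-1} Q \mathbf{1}$; hence $\mathbf{v}_0$ is an eigenvector if and only if the vector of row sums $Q\mathbf{1}$ is proportional to $(p_{xy_1},\dots,p_{xy_m})^\top$. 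This proportionality is what I will establish under the two homogeneity assumptions.

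First I would record the block structure feeding the Schur complement $Q = \Gamma_2(x)_{S_1,S_1} - \Gamma_2(x)_{S_1,S_2}\Gamma_2(x)_{S_2,S_2}^{-1}\Gamma_2(x)_{S_2,S_1}$. Working with the quadratic form $2\Gamma_2(f)(x) = \Delta\Gamma(f)(x) - 2\Gamma(f,\Delta f)(x)$ with $f(x)=0$ (the overall normalisation of $\Gamma_2(x)$ is immaterial for the eigenvector property), one checks that a value $f(z)$ at $z\in S_2$ enters only through the squares $(f(z)-f(y_i))^2$ inside $\Gamma(f)(y_i)$ and linearly through $\Delta f(y_i)$; in particular no product $f(z)f(z')$ with $z\neq z'$ ever occurs. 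Hence $\Gamma_2(x)_{S_2,S_2}$ is \emph{diagonal}, with entries $\tfrac12\sum_i p_{xy_i}p_{y_iz}$, while $(\Gamma_2(x)_{S_1,S_2})_{iz} = -p_{xy_i}p_{y_iz}$. Using the identity $\sum_j p_{xy_j}p_{y_jz} = 2\,(\Gamma_2(x)_{S_2,S_2})_{zz}$, the row sums of the correction term collapse to
\[
\left(\Gamma_2(x)_{S_1,S_2}\,\Gamma_2(x)_{S_2,S_2}^{-1}\,\Gamma_2(x)_{S_2,S_1}\,\mathbf{1}\right)_i
= 2p_{xy_i}\sum_{z\in S_2} p_{y_iz} = 2\,p_{xy_i}\,p^{+}(y_i),
\]
which is proportional to $p_{xy_i}$ exactly when $x$ is $S_1$-out regular.

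Next I would compute $\left(\Gamma_2(x)_{S_1,S_1}\mathbf{1}\right)_i$, i.e. the gradient at $\phi=\mathbf{1}$ of $\phi\mapsto 2\Gamma_2(f)(x)$ with $f(x)=0$, $f|_{S_1}=\phi$ and $f|_{S_2}=0$. The result splits into four types of contributions: an \emph{inward} part proportional to $p_{xy_i}p^{-}(y_i)$ (coming from the $z=x$ squares); an \emph{outward} part proportional to $p_{xy_i}p^{+}(y_i)$ (the diagonal remnant of the $(f(z)-f(y_i))^2$ squares); a \emph{degree} part proportional to $\tfrac{d_x}{\mu_x}p_{xy_i}$ (from $\Gamma(f)(x)$ in $\Delta\Gamma(f)(x)$ together with $(\Delta f(x))^2$); and a \emph{spherical} part $T_i := p_{xy_i}\sum_j p_{y_iy_j} - \sum_j p_{xy_j}p_{y_jy_i}$ coming from the edges inside $S_1$. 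The inward part is proportional to $p_{xy_i}$ under $S_1$-in regularity, the outward part (both here and in the Schur correction) under $S_1$-out regularity, and the degree part automatically.

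The main obstacle is the spherical term $T_i$: the hypotheses say nothing directly about the intra-sphere rates $p_{y_iy_j}$, so a priori $T_i$ need not be proportional to $p_{xy_i}$. The key observation resolving this is that $S_1$-in regularity, $p_{y_ix}=w_{xy_i}/\mu_{y_i}=p^{-}$, forces $\mu_{y_i}=w_{xy_i}/p^{-}$, whence
\[
p_{xy_i}p_{y_iy_j} = \frac{w_{xy_i}}{\mu_x}\cdot\frac{w_{y_iy_j}}{\mu_{y_i}} = \frac{p^{-}}{\mu_x}\,w_{y_iy_j},
\]
which is symmetric in $i$ and $j$ because $w$ is symmetric. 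This detailed-balance identity gives $\sum_j p_{xy_j}p_{y_jy_i} = p_{xy_i}\sum_j p_{y_iy_j}$, i.e. $T_i=0$; equivalently, the spherical block of $Q$ is a reversible, zero-row-sum weighted Laplacian and annihilates $\mathbf{1}$. Combining all four contributions, under both regularities one obtains $(Q\mathbf{1})_i = \lambda\, p_{xy_i}$ for a single constant $\lambda$, so that $A_\infty\mathbf{v}_0 = 2\diag(\mathbf{v}_0)^{-1}Q\mathbf{1} = 2\lambda\,\mathbf{v}_0$. Thus $\mathbf{v}_0$ is an eigenvector of $A_\infty$, and Proposition \ref{prop:v0_eigenvalue}(i) yields that $x$ is $N$-curvature sharp for some $N\in(0,\infty]$.
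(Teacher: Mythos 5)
Your proposal is correct and follows essentially the same route as the paper: reduce via Proposition \ref{prop:v0_eigenvalue}(i) to showing $\mathbf{v}_0$ is an eigenvector of $A_\infty$, equivalently that the row sums of $Q$ are proportional to $p_{xy_i}$, and then observe that the intra-$S_1$ contribution vanishes under $S_1$-in regularity (your detailed-balance identity $p_{xy_i}p_{y_iy_j}=\tfrac{p^-}{\mu_x}w_{y_iy_j}$ is exactly the identity the paper uses in the form $\tfrac{p_{xy_j}p_{y_jy_i}}{p_{xy_i}}=p_{y_iy_j}\tfrac{p_{y_jx}}{p_{y_ix}}$), while the remaining inward, outward and degree terms are constant multiples of $p_{xy_i}$ under the two regularity hypotheses. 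The only difference is that you rederive the block entries of $\Gamma_2(x)_{\hat 1}$ and the Schur correction from scratch, whereas the paper reads them off from the Laplacian decomposition \eqref{eq:Gamma2S1S1Laplacian}--\eqref{eq:QLaplacian} in the Appendix; the computations agree.
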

In the case of the non-weighted graphs, the $S_1$-in regularity is always satisfied ($p^{-}(y)=1$), and we even have equivalence between $S_1$-out regularity and $N$-curvature sharpness for some $N\in (0,\infty]$ \cite[Corollary 5.10]{CLP20}. In fact, one can check directly from \eqref{eq:CurMatrixNonweighted} the following fact in the case of non-weighted graphs: $\textbf{v}_0$ is an eigenvector of $A_\infty$ if and only if $x$ is $S_1$-out regular. Therefore, our Proposition \ref{prop:v0_eigenvalue}(i) is a substantial extension of \cite[Corollary 5.10]{CLP20} in the case of general weighted graphs..

 Our final result states that the curvature is nondecreasing under certain graph modifications.
\begin{theorem}\label{thm:graph_modify_conjecture}
Let $G=(V,w,\mu)$ be a weighted graph and fix a vertex $x\in V$. Assume that $x$ is $S_1$-in regular, i.e., $p^{-}(y)=p_{yx}$ is independent of $y\in S_1(x)$. Consider a modified weighted graph $\widetilde{G}$ obtained from $G$ by one of the following operations:
	\begin{itemize}
		\item[(O1)] Increase the edge-weight between a fixed pair $y,y'\in S_1(x)$ with $y\not=y'$ by $\tilde{w}_{yy'}=w_{yy'}+C_1$ for any constant $C_1>0$.
		\item[(O2)] Delete a vertex $z_0\in S_2(x)$ and remove all of its incident edges, i.e., $\tilde{w}_{yz_0}=0$ for all $y\in S_1(x)$. Increase the edge-weight between all pairs $y,y'\in S_1(x)$ with $y\not=y'$ by
		\begin{equation} \label{eq:edge_modify}
		\tilde{w}_{yy'}=w_{yy'}+C_2 w_{yz_0}w_{z_0y'}
		\end{equation}
		with any constant $\displaystyle C_2 \ge \frac{p^{-}(y)}{\mu_x p_{xz_0}^{(2)}}$.
	\end{itemize}
Then $\K_{\widetilde{G},x}(N) \ge \K_{G,x}(N)$ for any $N\in (0,\infty]$.
\end{theorem}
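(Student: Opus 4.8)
The plan is to use the eigenvalue reformulation from Theorem~\ref{thm:eigenvalue_main}, namely $\K_{G,x}(N) = \lambda_{\min}(A_N(x))$ together with the Rayleigh quotient description of the smallest eigenvalue. Since both graphs $G$ and $\widetilde{G}$ share the same vertex $x$ with the same neighbours $S_1(x)$, the same transition rates $p_{xy}$ for $y \in S_1(x)$ (the operations (O1) and (O2) only alter edge-weights \emph{within} $S_1(x)$ or \emph{between} $S_1(x)$ and $S_2(x)$, not the edges from $x$ itself), the vector $\mathbf{v}_0$ is identical for both graphs, and the rank-one perturbation term $\frac{2}{N}\mathbf{v}_0\mathbf{v}_0^\top$ is the same. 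Therefore it suffices to compare the curvature matrices $A_\infty(x)$ and $\widetilde{A}_\infty(x)$ and to show that $\widetilde{A}_\infty(x) \succeq A_\infty(x)$ as symmetric matrices; then $\widetilde{A}_N(x) \succeq A_N(x)$ for every $N$, and the monotonicity of $\lambda_{\min}$ under the Loewner order immediately gives $\K_{\widetilde{G},x}(N) = \lambda_{\min}(\widetilde{A}_N(x)) \ge \lambda_{\min}(A_N(x)) = \K_{G,x}(N)$.

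**First I would** make the reduction to the inequality $\widetilde{A}_\infty \succeq A_\infty$ precise, which by the definition \eqref{eq:A_defn} is equivalent to $\widetilde{Q}(x) \succeq Q(x)$ for the Schur complements $Q(x) = \Gamma_2(x)_{\hat{1}}/\Gamma_2(x)_{S_2,S_2}$, since conjugation by the fixed positive diagonal matrix $\diag(\mathbf{v}_0)^{-1}$ preserves the Loewner order. The explicit form of the curvature matrix in the non-weighted case, equation~\eqref{eq:CurMatrixNonweighted}, and its weighted analogue (from the Appendix) is the right computational tool here: I expect $A_\infty$ to decompose as a sum of a Laplacian-type term $-2\Delta_{S_1(x)}$ coming from edges inside $S_1(x)$, plus a term $-2\Delta_{S_1'(x)}$ built from the $S_2(x)$-interaction weights $w_{y_iy_j}^{S_1'(x)} = \sum_{z}\frac{w_{y_iz}w_{y_jz}}{d_z^-}$ (more precisely, the weighted version using $p^-$), plus diagonal contributions. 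The key structural point is that the $S_1$-in regularity hypothesis $p^-(y) = p_{yx}$ being constant makes the Schur complement term involving $S_2(x)$ take a clean form, essentially a weighted graph Laplacian on $S_1(x)$ whose edge weights are nonnegative combinations of the $w_{yz}w_{zy'}$ products.

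**The main step** is to verify that each operation increases $A_\infty$ in the Loewner order. For (O1), increasing $w_{yy'}$ by $C_1 > 0$ adds to $-2\Delta_{S_1(x)}$ exactly the term $2 C_1 (\e_y - \e_{y'})(\e_y - \e_{y'})^\top$ up to the diagonal-measure normalization, which is manifestly positive semidefinite (it is $C_1$ times a rank-one Laplacian of a single edge); since the $S_2(x)$-structure and the $\mathbf{v}_0$ term are untouched, $\widetilde{A}_\infty - A_\infty \succeq 0$ follows directly. For (O2), deleting $z_0$ removes its contribution to the Schur complement while simultaneously adding the edge-weights $C_2 w_{yz_0}w_{z_0y'}$ inside $S_1(x)$; the algebra here is more delicate because the Schur complement is nonlinear in the $S_2$-block, but under $S_1$-in regularity the contribution of a single vertex $z_0$ to $Q(x)$ should reduce to a rank-one (or low-rank) negative-semidefinite correction proportional to $\frac{p_{xz_0}^{(2)}}{p^-}(\cdots)$, and the hypothesis $C_2 \ge \frac{p^-(y)}{\mu_x p_{xz_0}^{(2)}}$ is precisely what guarantees that the compensating rank-one positive term from the added $S_1$-edges dominates it.

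**The hard part will be** the (O2) computation: carefully identifying how a single $S_2$-vertex $z_0$ contributes to the Schur complement $Q(x)$ and showing that, after removing $z_0$ and adding the prescribed intra-$S_1$ edges, the net change $\widetilde{Q}(x) - Q(x)$ is positive semidefinite. I anticipate that the contribution of $z_0$ to the Schur complement, after using $S_1$-in regularity to simplify the relevant entries of $\Gamma_2(x)_{S_2,S_2}$ and $\Gamma_2(x)_{S_1,S_2}$, factors through the single column indexed by $z_0$ and yields a rank-one term of the form $-\kappa \, \mathbf{u}\mathbf{u}^\top$ with $\mathbf{u} = (w_{y_1 z_0},\ldots,w_{y_m z_0})^\top$ and $\kappa$ controlled by $1/(\mu_x p_{xz_0}^{(2)})$ times the in-regular constant; the added edges contribute $+C_2 \mathbf{u}\mathbf{u}^\top$ up to the same factoring, so the threshold condition on $C_2$ is exactly the statement that $C_2 \ge \kappa$, closing the argument. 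I would therefore organize the proof as: (1) reduce to $\widetilde{A}_\infty \succeq A_\infty$; (2) dispense with (O1) via the single-edge Laplacian observation; (3) isolate the rank-one $z_0$-correction to the Schur complement using $S_1$-in regularity and match it against the added $S_1$-edge term to obtain the sharp constant $C_2$.
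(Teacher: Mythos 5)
Your reduction is exactly the paper's: $\mathbf{v}_0$ is unchanged by both operations, so it suffices to show $\widetilde{Q}-Q\succeq 0$, and the paper likewise verifies this entrywise from \eqref{eq:Q_ondiag}--\eqref{eq:Q_offdiag}. However, both of your positivity arguments have gaps where the sketched structure is not what actually occurs.

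For (O1), the perturbation is \emph{not} ``manifestly'' a single-edge Laplacian, and it is not positive semidefinite in general. Increasing $w_{yy'}$ changes $p_{yy'}$ and $p_{y'y}$ by the different amounts $C_1/\mu_y$ and $C_1/\mu_{y'}$, and these enter the diagonal of $Q$ with the asymmetric coefficients $3p_{xy}$ and $p_{xy'}$ from \eqref{eq:Q_ondiag}. The resulting nontrivial $2\times 2$ block of $\widetilde{Q}-Q$ is
$\frac{C_1}{4\mu_x}\left(\begin{smallmatrix} 3p_{yx}+p_{y'x} & -2p_{yx}-2p_{y'x}\\ -2p_{yx}-2p_{y'x} & p_{yx}+3p_{y'x}\end{smallmatrix}\right)$,
whose determinant is $-\frac{C_1^2}{16\mu_x^2}(p_{yx}-p_{y'x})^2\le 0$; it is indefinite whenever $p_{yx}\neq p_{y'x}$ and collapses to the edge Laplacian $\frac{C_1 p_{yx}}{\mu_x}(\e_y-\e_{y'})(\e_y-\e_{y'})^\top$ only when $p_{yx}=p_{y'x}$. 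So the $S_1$-in regularity hypothesis is doing essential work already in (O1) (in the decomposition \eqref{eq:curMatrixLaplacian} it kills the extra diagonal term $\tfrac14\sum_{y'}(p_{xy}p_{yy'}-p_{xy'}p_{y'y})$, which is what spoils the Laplacian structure), whereas you state that the hypothesis is only needed to simplify the $S_2$-interaction part of the Schur complement. That misdiagnosis would leave you claiming positivity of an indefinite matrix.

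For (O2), the anticipated rank-one cancellation $+C_2\mathbf{u}\mathbf{u}^\top$ versus $-\kappa\,\mathbf{u}\mathbf{u}^\top$ does not materialise. Deleting $z_0$ does remove a subtracted rank-one Schur term, adding $\frac{1}{p^{(2)}_{xz_0}}\mathbf{q}\mathbf{q}^\top\succeq 0$ with $\mathbf{q}=(p_{xy_i}p_{y_iz_0})_i$, but it also removes the out-degree contributions $\frac34 p_{xy_i}p_{y_iz_0}$ from the diagonal of $Q$ --- a \emph{negative} diagonal perturbation you do not account for. Moreover the added intra-$S_1$ edges contribute (under in-regularity) a weighted sum of single-edge Laplacians, which is of the form $\mathrm{diag}-\mathrm{rank\ one}$, not $+C_2\mathbf{u}\mathbf{u}^\top$; note that increasing $C_2$ actually makes the off-diagonal entries of $\widetilde{Q}-Q$ \emph{more negative}. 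The paper closes the argument quite differently: it computes that every row sum of $\widetilde{Q}-Q$ equals $\frac14 p_{xy_i}p_{y_iz_0}>0$ independently of $C_2$, and the hypothesis $C_2\ge p^{-}(y)/(\mu_x p^{(2)}_{xz_0})$ serves only to force the off-diagonal entries to be nonpositive, after which positive semidefiniteness follows from diagonal dominance. Your step (3) as written would not close; you would need to replace the rank-one matching by this (or an equivalent) diagonal-dominance argument.
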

The part (O2) of the above theorem confirms Conjecture 6.13 in \cite{CLP20} in the case of non-weighted graphs where we consider $\tilde{w}_{yy'}=w_{yy'} + 1$ for all pairs $y,y'\in S_1(x)$ of neighbours of $z_0$. In this special case, the constant $C_2=1$ is bigger or equal to the threshold \[\frac{p^{-}(y)}{\mu_x p_{xz_0}^{(2)}}=\frac{1}{p_{xz_0}^{(2)}}=:\frac{1}{\text{in-degree of } z_0}.\]
In fact, the $S_1$-in regularity condition at $x$ can be weakened to $S_1$-in regularity at $x$ for the involved vertices in $S_1(x)$. In the operation (O1) we only require $p_{yx}=p_{y'x}$, and in (O2) we require $p_{yx}$ is constant for all $y\in S_1(x)$ such that $w_{yz_0}\not=0$.

\textbf{Note:} \emph{After the submission of our first arXiv version, we became aware of the work by Siconolfi \cite{Siconolfi-proceeding, Siconolfi} in which the $\infty$-Bakry-\'Emery curvature $\K_{G,x}(\infty)$ is also formulated as an eigenvalue problem in the special case of non-weighted graphs.}

\section{Curvature reformulation} \label{sect:curvature_reformulation}

In this section, we prove the eigenvalue reformulation of the curvature (Theorem \ref{thm:eigenvalue_main}).
Recall the optimization problem which formulates the Bakry-\'Emery curvature $\K_{G,x}(N)$,
\begin{align} \label{eq:opt_problem_old}
	&\text{maximize}\,\,\, K   \tag{$P$} \\
	&\text{subject to}\,\,\,\Gamma_2(x)-\frac{1}{N}\Delta(x)\Delta(x)^\top - K\Gamma(x) \succeq 0, \nonumber
\end{align}
This curvature is a local concept and uniquely determined by the structure of the two-ball $B_2(x)$. In particular, the symmetric matrix $\Gamma_2(x)$ is of size $|B_2(x)|$, and the symmetric matrices $\Delta(x)\Delta(x)^\top$ and $\Gamma(x)$ are of sizes $|B_1(x)|$ (and trivially extended by zeros to matrices of sizes $|B_2(x)|$); see Appendix \ref{sect:appendix} for details.

Schmuckenschl\"ager \cite{Schmuckenschlager98} observed that the size of these matrices can be reduced by one: since $\Gamma_2(f), \Gamma(f), \Delta f$ all vanish for constant functions $f$, the curvature-dimension inequality $CD(\K,N)$ remains valid after shifting $f$ by an additive constant. It is therefore sufficient to verify $(\ref{eq:CDineq})$ for all functions $f:V\to \mathbb{R}$ with $f(x)=0$. This observation allows us remove from these matrices the row and column corresponding to the vertex $x$, and we are able to reformulate the above problem \eqref{eq:opt_problem_old} as
\begin{align} \label{eq:opt_problem_new}
	&\text{maximize}\,\,\, K   \tag{$P'$} \\
	&\text{subject to}\,\,\,M_{K,N}(x):=\left(\Gamma_2(x)-\frac{1}{N}\Delta(x)\Delta(x)^\top-K\Gamma(x)\right)_{S_1\cup S_2, S_1\cup S_2} \succeq 0, \nonumber
\end{align}

Next we recall the concept of the Schur complement, which allows us to further reduce the size of the involved symmetric matrices in \eqref{eq:opt_problem_new}.
\begin{lemma}[Schur complement] \label{lem:Schur}
Consider a real symmetric matrix $M= \begin{pmatrix}
	M_{11} & M_{12} \\
	M_{21} & M_{22}
\end{pmatrix}$,
where $M_{11}$ and $M_{22}$ are square submatrices, and assume that $M_{22} \succ 0$. The Schur complement $M/M_{22}$ is defined as
\begin{equation}\label{eq:Schur_defn}
	M/M_{22} := M_{11}-M_{12}M_{22}^{-1}M_{21}.
\end{equation}
Then $M/M_{22} \succeq 0$ if and only if $M \succeq 0$.
\end{lemma}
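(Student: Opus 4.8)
The plan is to exhibit an explicit block factorization of $M$ as a congruence transformation whose block-diagonal core is built from $M/M_{22}$ and $M_{22}$. Since a congruence preserves positive semidefiniteness and $M_{22}\succ 0$ is assumed by hypothesis, the claimed equivalence then drops out immediately.

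First I would record the elementary consequences of symmetry: because $M$ is symmetric we have $M_{21}=M_{12}^\top$, and because $M_{22}\succ 0$ its inverse $M_{22}^{-1}$ exists and is itself symmetric, so that $(M_{12}M_{22}^{-1})^\top = M_{22}^{-1}M_{12}^\top = M_{22}^{-1}M_{21}$. Next I would introduce the block-triangular matrix
\[ L:=\begin{pmatrix} I & M_{12}M_{22}^{-1} \\ 0 & I\end{pmatrix} \]
and verify, by direct block multiplication, the identity
\[ M = L\begin{pmatrix} M/M_{22} & 0 \\ 0 & M_{22}\end{pmatrix} L^\top. \]
Using the definition $M/M_{22}=M_{11}-M_{12}M_{22}^{-1}M_{21}$, the $(1,1)$-block of the right-hand side recombines to $(M/M_{22}) + M_{12}M_{22}^{-1}M_{21}=M_{11}$, while the off-diagonal blocks return exactly $M_{12}$ and $M_{21}$, and the $(2,2)$-block returns $M_{22}$; here the symmetry relations from the previous step are what guarantee the product is genuinely of the congruence form $LDL^\top$ rather than a one-sided product.

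Since $L$ is unipotent upper-triangular, it is invertible with $\det L = 1$, so $X\mapsto LXL^\top$ is a congruence. By Sylvester's law of inertia this map preserves the signs of the eigenvalues; in particular $M\succeq 0$ if and only if $\diag(M/M_{22},\,M_{22})\succeq 0$. A symmetric block-diagonal matrix is positive semidefinite precisely when each diagonal block is, and since $M_{22}\succ 0$ holds by assumption, this condition reduces exactly to $M/M_{22}\succeq 0$, which is the desired statement. (An equivalent route would be to complete the square in the quadratic form: for $v=(v_1,v_2)^\top$ one checks $v^\top M v = v_1^\top(M/M_{22})v_1 + (v_2+M_{22}^{-1}M_{21}v_1)^\top M_{22}(v_2+M_{22}^{-1}M_{21}v_1)$, and reads off both implications by choosing $v_2$ appropriately.)

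I expect no genuine obstacle here, as this is a classical lemma; the only point requiring care is the bookkeeping in the block multiplication, where the symmetry of $M_{22}^{-1}$ and the relation $M_{21}=M_{12}^\top$ must be invoked to confirm that the factorization is of the symmetric form $LDL^\top$, so that Sylvester's law of inertia applies and the conclusion is an honest statement about positive semidefiniteness rather than merely about invertibility.
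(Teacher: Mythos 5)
Your proof is correct. The paper itself does not prove this lemma but merely cites \cite[Proposition 2.1]{Gal19} and \cite[Proposition 5.13]{CLP20}, and your argument --- the congruence factorization $M = L\,\mathrm{diag}(M/M_{22}, M_{22})\,L^\top$ with $L$ unipotent block-triangular, equivalently completing the square in the quadratic form --- is precisely the standard proof found in those references, so there is nothing to compare beyond noting that your write-up supplies the details the paper omits.
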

The proof of this lemma can be found in, e.g., \cite[Proposition 2.1]{Gal19} or \cite[Proposition 5.13]{CLP20}. We aim to apply this lemma for the symmetric matrix $M_{K,N}(x)$ given in \eqref{eq:opt_problem_new}. Since $\Delta(x)$ and $\Gamma(x)$ have zero entries in the $S_2(x)$-structure, it means the matrix $M_{K,N}(x)$ has the following block structure:
\begin{align*}
	M_{K,N}(x) = \left(\begin{array}{cc}
		\Gamma_2(x)_{S_1,S_1}-\frac{1}{N}\Delta(x)_{S_1}\Delta(x)_{S_1}^\top-K\Gamma(x)_{S_1,S_1} & \Gamma_2(x)_{S_1,S_2} \\ 
		\Gamma_2(x)_{S_2,S_1} & \Gamma_2(x)_{S_2,S_1}
	\end{array}
	\right).
\end{align*}
By folding $M_{K,N}(x)$ into the upper left block, the Schur complement is given by
\begin{align*}
	&M_{K,N}(x)/ \Gamma_2(x)_{S_2,S_2}  \\
	&= \Gamma_2(x)_{S_1,S_1}-\frac{1}{N}\Delta(x)_{S_1}\Delta(x)_{S_1}^\top-K\Gamma(x)_{S_1,S_1} - \Gamma_2(x)_{S_1,S_2}\Gamma_2(x)_{S_2,S_2}^{-1}\Gamma_2(x)_{S_2,S_1}\\
	&= Q(x) -\frac{1}{N}\Delta(x)_{S_1}\Delta(x)_{S_1}^\top-K\Gamma(x)_{S_1,S_1},
\end{align*}
where $Q(x):=\Gamma_2(x)_{\hat{1}}/ \Gamma_2(x)_{S_2,S_2}$ denotes the folding of $\Gamma_2(x)_{\hat{1}}=
\begin{pmatrix}
	\Gamma_2(x)_{S_1,S_1} & \Gamma_2(x)_{S_1,S_2} \\
	\Gamma_2(x)_{S_2,S_1} & \Gamma_2(x)_{S_2,S_2}
\end{pmatrix}$.

The importance of $\Gamma_2(x)_{S_1,S_1}$ for a lower curvature bound was already mentioned in Schmuckenschl\"ager \cite[pp.194-195]{Schmuckenschlager98} (where he used the notation $A_{II}$).

Lemma \ref{lem:Schur} implies that
\begin{equation} \label{eq:K=argmax_old}
\K_{G,x}(N) = \argmax_{K}\left\{ Q(x) -\frac{1}{N}\Delta(x)_{S_1}\Delta(x)_{S_1}^\top-K\Gamma(x)_{S_1,S_1} \succeq 0 \right\}.
\end{equation}

We recall from Appendix \ref{sect:appendix} that $\Gamma(x)_{S_1,S_1}=\frac{1}{2}\diag(\Delta(x)_{S_1})$ and $\Delta(x)_{S_1}=(p_{xy_1} \ p_{xy_2} \ ... \ p_{xy_m})^\top$, where $S_1(x)=\{y_1,y_2,...,y_m\}$.

Denote the vector $\mathbf{v}_0 := \mathbf{v}_0(x) = (\sqrt{p_{xy_1}} \ \sqrt{p_{xy_2}} \ ... \ \sqrt{p_{xy_m}})^\top$. The maximum argument in \eqref{eq:K=argmax_old} does not change under the multiplication by $\diag(\mathbf{v}_0)^{-1} \succ 0$ both from left and right sides, that is,
\begin{equation} \label{eq:K=argmax_new}
	\K_{G,x}(N) = \argmax_{K}\left\{ \diag(\mathbf{v}_0)^{-1}Q(x)\diag(\mathbf{v}_0)^{-1} -\frac{1}{N}\mathbf{v}_0\mathbf{v}_0^\top-\frac{K}{2}{\rm Id} \succeq 0 \right\}.
\end{equation}

In other words,
\begin{align*}
\K_{G,x}(N) &= \lambda_{\min{}}(2\diag(\mathbf{v}_0)^{-1}Q(x)\diag(\mathbf{v}_0)^{-1} -\frac{2}{N}\mathbf{v}_0\mathbf{v}_0^\top)\\
&=\lambda_{\min{}}(A_\infty-\frac{2}{N}\mathbf{v}_0\mathbf{v}_0^\top) = \lambda_{\min{}}(A_N),
\end{align*}
where $A_\infty=A_\infty(x)$ and $A_N=A_N(x)$ are defined in \eqref{eq:A_defn}, and $\lambda_{\min{}}(A_N)$ denotes the smallest eigenvalue of $A_N$. This finishes the proof of Theorem \ref{thm:eigenvalue_main}.
\begin{remark} \label{rem:cube_K33}
It follows from the Appendix \eqref{eq:Q_ondiag}-\eqref{eq:A_infty_entries} that the curvature matrix at a vertex $x$ is completely determined by the weighted structure of the \emph{incomplete two-ball} around $x$, namely $B_2^{\rm inc}(x)$, which is obtained from the induced subgraph of $B_2(x)$ by removing all edges connecting vertices within $S_2(x)$. It is interesting to note however that two graphs can share the same curvature matrix, even when they have non-isomorphic $B_2^{\rm inc}(x)$. For example, both graphs $G_1$ and $G_2$, whose $B_2^{\rm inc}(x)$ are as in Figure \ref{fig:example_same_A_inf}, have their curvature matrix at $x$ equal to
$A^{G_1}_N(x) = 2{\rm Id}_4-\frac{2}{N}J_4 = A^{G_2}_N(x)$.

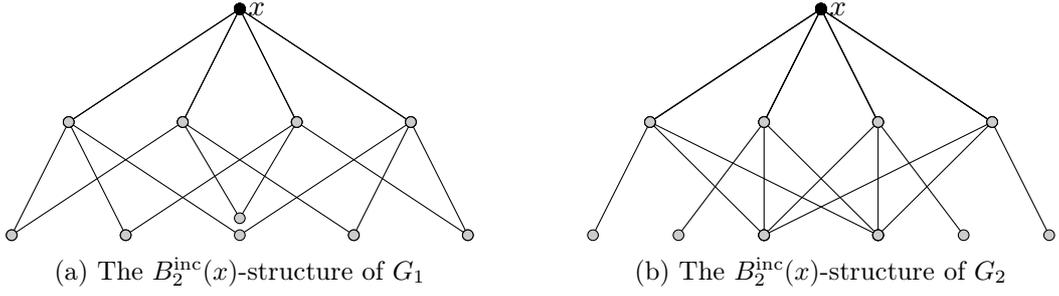
\begin{figure}[!htp]
\begin{center}
\tikzstyle{every node}=[circle, draw, fill=black!20, inner sep=0pt, minimum width=4pt]

    \begin{subfigure}[b]{0.45\textwidth}
        \centering
 \begin{tikzpicture}[scale=1.5]

 \foreach \x in {-.5,.5}
	{\draw (1.5,2) node{} -- (0,1) node{} -- (\x,0) node{};}
\foreach \x in {-.5,2.5}
	{\draw (1.5,2) node{} -- (1,1) node{} -- (\x,0) node{};}
\foreach \x in {.5,3.5}
	{\draw (1.5,2) node{} -- (2,1) node{} -- (\x,0) node{};}
\foreach \x in {2.5,3.5}
	{\draw (1.5,2) node{} -- (3,1) node{} -- (\x,0) node{};}
\draw (1,1) node{} -- (1.5,.15) node{} -- (2,1) node{};	
\draw (0,1) node{} -- (1.5,0) node{} -- (3,1) node{};	
\node at (1.5,2) [label=right: ${x}$, fill=black] {};
 	
\end{tikzpicture}

\caption{The $B_2^{\rm inc}(x)$-structure of $G_1$}
\label{fig:G_1}
    \end{subfigure}
\qquad
    \begin{subfigure}[b]{0.45\textwidth}
        \centering
 \begin{tikzpicture}[scale=1.5]

 \foreach \x in {-.5,1,2}
	{\draw (1.5,2) node{} -- (0,1) node{} -- (\x,0) node{};}
\foreach \x in {.25,1,2}
	{\draw (1.5,2) node{} -- (1,1) node{} -- (\x,0) node{};}
\foreach \x in {1,2,2.75}
	{\draw (1.5,2) node{} -- (2,1) node{} -- (\x,0) node{};}
\foreach \x in {1,2,3.5}
	{\draw (1.5,2) node{} -- (3,1) node{} -- (\x,0) node{};}
\node at (1.5,2) [label=right: ${x}$, fill=black] {};

\end{tikzpicture}

\caption{The $B_2^{\rm inc}(x)$-structure of $G_2$}
\label{fig:G_2}
    \end{subfigure}
   \end{center}
   \caption{Two graphs $G_1$ and $G_2$ with different $B_2^{\rm inc}(x)$-structures share the same curvature matrix. For example, $G_1$ can be the $4$-dimensional cube $\Q^4$.}
   \label{fig:example_same_A_inf}
\end{figure}

On the other hand, the curvature matrix $A_\infty(x)$ contains more information than the curvature function $\K_{G,x}$, and $A_\infty(x)$ cannot be recovered from $\K_{G,x}$. For example, it is shown below that the non-weighted cube $\Q^3$ and complete bipartite graph $K_{3,3}$ share the same curvature function, while having different curvature matrices.

\begin{multicols}{2}
For any vertex $x$ in $G=\Q^3$:
\begin{align*}
A_N^{G}(x) &= \begin{pmatrix} 2&&\\ &2&\\ &&2 \end{pmatrix} - \frac{2}{N}J_3,\\
\sigma(A_N^{G} (x)) &= \{ 2-\frac{6}{N}, 2, 2\}, \\
\K_{G,x}(N) &= 2-\frac{6}{N}.
\end{align*}

\columnbreak
For any vertex $x$ in $H=K_{3,3}$:
\begin{align*}
A_N^{H}(x) &= \begin{pmatrix} 8/3&-1/3&-1/3\\ -1/3&8/3&-1/3\\ -1/3&-1/3&8/3 \end{pmatrix} - \frac{2}{N}J_3,\\
\sigma(A_N^{H} (x)) &= \{ 2-\frac{6}{N}, 3, 3\}, \\
\K_{H,x}(N) &= 2-\frac{6}{N}.
\end{align*}
\end{multicols}

\end{remark}

%
%
%
\section{Properties of the curvature function $\K_{G,x}$} \label{sect:curv_fct_properties}

This section is devoted to the proof of Theorem \ref{thm:continuous_and_threshold} about properties of the curvature function $\K_{G,x}:(0,\infty] \to \IR$, which will be divided into small steps.

\begin{proposition} \label{prop:curv_fct_cont_limits}
The curvature function $\K_{G,x}:(0,\infty] \to \IR$ is continuous, monotone increasing and concave with $\lim_{N\to 0} \K_{G,x}(N)=-\infty$ and $\lim_{N\to \infty} \K_{G,x}(N) <\infty$.
\end{proposition}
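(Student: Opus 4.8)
The plan is to leverage Theorem~\ref{thm:eigenvalue_main} together with the Rayleigh-quotient description of the smallest eigenvalue, which reduces every assertion to an elementary statement about a pointwise infimum of a family of simple scalar functions. Writing $A_N = A_\infty - \frac{2}{N}\mathbf{v}_0\mathbf{v}_0^\top$ and using
\[ \K_{G,x}(N) = \lambda_{\min}(A_N) = \inf_{\|v\|=1}\Bigl( v^\top A_\infty v - \tfrac{2}{N}(v^\top \mathbf{v}_0)^2 \Bigr), \]
I would set, for each unit vector $v$, $g_v(N) := a_v - b_v/N$ with $a_v := v^\top A_\infty v$ and $b_v := 2(v^\top \mathbf{v}_0)^2 \ge 0$, so that $\K_{G,x} = \inf_{\|v\|=1} g_v$ is a pointwise infimum of the functions $g_v$ on $(0,\infty]$.

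Monotonicity and concavity then follow purely from the shape of the $g_v$. Each $g_v$ is monotone increasing on $(0,\infty]$ since $g_v'(N) = b_v/N^2 \ge 0$, and the pointwise infimum of increasing functions is increasing. Likewise each $g_v$ is concave, because $N\mapsto -1/N$ is concave and $b_v\ge 0$ (so $g_v''(N) = -2b_v/N^3 \le 0$), and the pointwise infimum of a family of concave functions is concave. Hence $\K_{G,x}$ is monotone increasing and concave. Finiteness and concavity on the open interval $(0,\infty)$ yield continuity there automatically, so only the endpoint $N=\infty$ needs separate attention.

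For the two limits and the continuity at $\infty$, I would argue as follows. Choosing the test vector $v = \mathbf{v}_0/\|\mathbf{v}_0\|$ (note $\|\mathbf{v}_0\|^2 = \sum_i p_{xy_i} = d_x/\mu_x > 0$) gives
\[ \K_{G,x}(N) \le \frac{\mathbf{v}_0^\top A_\infty \mathbf{v}_0}{\|\mathbf{v}_0\|^2} - \frac{2}{N}\|\mathbf{v}_0\|^2, \]
whose right-hand side tends to $-\infty$ as $N\to 0^+$, so $\lim_{N\to 0}\K_{G,x}(N) = -\infty$. For $N\to\infty$, the inequality $A_N \preceq A_\infty$ forces $\K_{G,x}(N) = \lambda_{\min}(A_N) \le \lambda_{\min}(A_\infty) < \infty$, so the increasing limit is finite; moreover $A_N \to A_\infty$ entrywise as $N\to\infty$ and eigenvalues depend continuously on the matrix, so $\lim_{N\to\infty}\K_{G,x}(N) = \lambda_{\min}(A_\infty) = \K_{G,x}(\infty)$, establishing continuity at the endpoint.

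I expect no serious obstacle here: the difficulty is conceptual rather than technical, namely recognising that the Rayleigh quotient exhibits $\K_{G,x}$ as an infimum of functions of the simple form $a - b/N$ with $b\ge 0$, after which monotonicity and concavity are immediate and both limits are read off from a single well-chosen test vector. The only point requiring minor care is the behaviour at $N=\infty$, where one must invoke continuity of eigenvalues (or, equivalently, monotonicity together with the uniform bound $A_N \preceq A_\infty$) to match the limit with the value $\lambda_{\min}(A_\infty)$.
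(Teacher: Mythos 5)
Your proof is correct, and while it rests on the same Rayleigh-quotient foundation as the paper's, it is packaged in a genuinely different and arguably cleaner way. The paper proves monotonicity and concavity by invoking the superadditivity $\lambda_{\min}(A+B)\ge\lambda_{\min}(A)+\lambda_{\min}(B)$ for specific decompositions of $A_N$ (e.g.\ $A_N = A_{N'} + (\tfrac{2}{N'}-\tfrac{2}{N})\mathbf{v}_0\mathbf{v}_0^\top$, and a slightly fiddly convexity computation with $\tfrac{\alpha}{N}+\tfrac{1-\alpha}{N'}-\tfrac{1}{\alpha N+(1-\alpha)N'}>0$ for concavity), and it obtains continuity from the continuous dependence of polynomial roots on coefficients applied to the characteristic polynomial. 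You instead exhibit $\K_{G,x}$ directly as the lower envelope $\inf_{\|v\|=1} \bigl(a_v - b_v/N\bigr)$ with $b_v\ge 0$, from which monotonicity and concavity are immediate (an infimum of increasing, respectively concave, functions is increasing, respectively concave), and continuity on $(0,\infty)$ comes for free from finiteness plus concavity, leaving only the endpoint $N=\infty$, which you handle correctly via $A_N\to A_\infty$ and eigenvalue continuity (or, equivalently, via the two-sided bound $\lambda_{\min}(A_\infty) - \tfrac{2}{N}\|\mathbf{v}_0\|^2 \le \K_{G,x}(N)\le \lambda_{\min}(A_\infty)$, which follows from Cauchy--Schwarz applied to $(v^\top\mathbf{v}_0)^2$). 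Your limit computations match the paper's in substance: the $N\to 0$ divergence via the single test vector $\mathbf{v}_0/\|\mathbf{v}_0\|$ is a sharper version of the paper's operator-norm bound. What your route buys is economy — one structural observation yields all four claims — while the paper's route has the advantage that the superadditivity inequality and its equality case are reused verbatim in Lemma~\ref{lem:notsimple_eigenvalue} and in the proof of Proposition~\ref{prop:curv_sharp_shape}, so the authors get those later arguments at no extra cost.
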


\begin{proof}
It is known that the zeros of a polynomial are continuous functions of the coefficients of the polynomial (see, e.g., \cite[Theorem (1,4)]{Marden}). In particular for the characteristic polynomial in $\lambda$, namely $\det(A_N-\lambda {\rm Id})$, it means the ordered set of eigenvalues of $A_N$, respecting their multiplicities, are continuous in $N$. In particular, $\K_{G,x}(N)=\lambda_{\min{}}(A_N)$ is continuous in $N$.

Monotonicity and concavity of $\K_{G,x}$ employ the crucial fact that, for symmetric matrices $A$ and $B$,
\[
\lambda_{\min{}}(A+B) = \inf_{v\not=0} \frac{v^\top (A+B) v}{v^\top v} \ge \inf_{v\not=0} \frac{v^\top A v}{v^\top v} + \inf_{v\not=0} \frac{v^\top B v}{v^\top v} = \lambda_{\min{}}(A)+\lambda_{\min{}}(B),
\]
and the inequality holds with equality iff $A$ and $B$ share an eigenvector corresponding to their minimal eigenvalues. Recall also that $\mathbf{v}_0\mathbf{v}_0^\top$ is a rank one matrix with the only nontrivial eigenvalue $\mathbf{v}^\top_0\mathbf{v}_0>0$, so $\lambda_{\max{}}(\mathbf{v}_0\mathbf{v}_0^\top)=\mathbf{v}^\top_0\mathbf{v}_0$ and $\lambda_{\min{}}(\mathbf{v}_0\mathbf{v}_0^\top)=0$.

For $0<N'<N\le \infty$, we have
\begin{align} \label{eq:monotone_ineq}
\lambda_{\min{}}(A_N) &=\lambda_{\min{}}\left(A_{N'}+\Big(\frac{2}{N'}-\frac{2}{N}\Big)\mathbf{v}_0\mathbf{v}_0^\top\right) \nonumber\\
&\ge \lambda_{\min{}}(A_{N'})+ \lambda_{\min{}} \Big( \underbrace{(\frac{2}{N'}-\frac{2}{N})}_{>0} \mathbf{v}_0\mathbf{v}_0^\top\Big)= \lambda_{\min{}}(A_{N'}),
\end{align}
Similarly, for $0<N'<N\le \infty$ and $\alpha\in (0,1)$, we have
\begin{align*}
\lambda_{\min{}}(A_{\alpha N+(1-\alpha)N'})
&= \lambda_{\min{}} \Big(\alpha A_{N} +(1-\alpha)A_{N'} +
2\underbrace{(\frac{\alpha}{N}+\frac{1-\alpha}{N'}-\frac{1}{\alpha N+(1-\alpha)N'})}_{> 0} \mathbf{v}_0\mathbf{v}_0^\top \Big) \\
&\ge \alpha \lambda_{\min{}}( A_{N}) + (1-\alpha)\lambda_{\min{}}( A_{N'}),
\end{align*}

To derive $\K_{G,x}(\infty)= \lim_{N\to \infty} \K_{G,x}(N) < \infty$ and $\lim_{N\to 0} \K_{G,x}(N) = -\infty $, we argue that
\begin{align*}
\lambda_{\min{}}(A_N) =\lambda_{\min{}}\left(A_{\infty}-\frac{2}{N}\mathbf{v}_0\mathbf{v}_0^\top\right)
 \to \lambda_{\min{}}(A_\infty) \qquad \text{as } N\to \infty,
\end{align*}
and
\[\lambda_{\min{}}(A_N) \le \|A_{\infty}\|+\lambda_{\min{}}\bigl(-\frac{2}{N}\mathbf{v}_0\mathbf{v}_0^\top\bigr) = \|A_{\infty} \| -\frac{2}{N}\mathbf{v}_0^\top\mathbf{v}_0  \to -\infty \qquad \text{as } N\to 0, \] where $\|\cdot\|$ denotes the operator norm.
\end{proof}

\begin{lemma} \label{lem:notsimple_eigenvalue}
If $\lambda_{\min{}}(A_{N'})$ is not simple for some $N'\in (0,\infty]$, then $\lambda_{\min{}}(A_{N})=\lambda_{\min{}}(A_{N'})$ for all $N\in [N',\infty]$. In other words, $\K_{G,x}$ is constant on $[N',\infty]$.
\end{lemma}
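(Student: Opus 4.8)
The plan is to exploit the fact that passing from $A_{N'}$ to $A_N$ for $N \ge N'$ amounts to adding a \emph{nonnegative} multiple of the rank-one positive semidefinite matrix $\mathbf{v}_0\mathbf{v}_0^\top$. Concretely, for $N\in[N',\infty]$ I would write
\[ A_N = A_{N'} + \Big(\tfrac{2}{N'}-\tfrac{2}{N}\Big)\mathbf{v}_0\mathbf{v}_0^\top, \]
where the coefficient $c_N := \tfrac{2}{N'}-\tfrac{2}{N}$ satisfies $c_N \ge 0$. The monotonicity estimate already established in \eqref{eq:monotone_ineq} (equivalently, Proposition \ref{prop:curv_fct_cont_limits}) then gives one of the two inequalities for free, namely $\lambda_{\min}(A_N)\ge\lambda_{\min}(A_{N'})$ for all $N\ge N'$. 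So the entire content of the lemma reduces to producing the reverse inequality $\lambda_{\min}(A_N)\le\lambda_{\min}(A_{N'})$ under the non-simplicity hypothesis.

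To obtain this reverse inequality, I would use the assumption that $\lambda_{\min}(A_{N'})$ is not simple, so that its eigenspace $E:=E_{\min}(A_{N'})$ has $\dim E\ge 2$. Since $\mathbf{v}_0$ spans a one-dimensional subspace, a dimension count shows that $E\cap\mathbf{v}_0^{\perp}$ has dimension at least $\dim E - 1\ge 1$, and hence contains a nonzero vector $u$ with $\mathbf{v}_0^\top u=0$. For such $u$ the rank-one perturbation acts trivially, since $\mathbf{v}_0\mathbf{v}_0^\top u=\mathbf{v}_0(\mathbf{v}_0^\top u)=0$, so
\[ A_N u = A_{N'}u + c_N\,\mathbf{v}_0(\mathbf{v}_0^\top u) = A_{N'}u = \lambda_{\min}(A_{N'})\,u. \]
Thus $\lambda_{\min}(A_{N'})$ is an eigenvalue of $A_N$ with eigenvector $u$, and because $\lambda_{\min}(A_N)$ is the \emph{smallest} eigenvalue of $A_N$, this yields $\lambda_{\min}(A_N)\le\lambda_{\min}(A_{N'})$.

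Combining the two inequalities gives $\lambda_{\min}(A_N)=\lambda_{\min}(A_{N'})$ for every $N\in[N',\infty]$, which is exactly the claim (the case $N'=\infty$ being trivial, since then $[N',\infty]=\{\infty\}$). The only step requiring genuine care — which I would regard as the crux rather than a serious obstacle — is the dimension-count argument producing an eigenvector $u$ of $A_{N'}$ in the minimal eigenspace that is orthogonal to $\mathbf{v}_0$; this is precisely where the non-simplicity hypothesis is indispensable, because a one-dimensional minimal eigenspace could be spanned by $\mathbf{v}_0$ itself, in which case no such $u$ exists and the perturbation would genuinely move the smallest eigenvalue. Everything else is a direct verification using only the rank-one structure of $\mathbf{v}_0\mathbf{v}_0^\top$ and the sign of $c_N$.
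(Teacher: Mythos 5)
Your proposal is correct and follows essentially the same route as the paper: both use the decomposition $A_N = A_{N'} + \bigl(\tfrac{2}{N'}-\tfrac{2}{N}\bigr)\mathbf{v}_0\mathbf{v}_0^\top$ and the existence (from non-simplicity) of a minimal eigenvector of $A_{N'}$ orthogonal to $\mathbf{v}_0$, on which the rank-one perturbation acts trivially. The only cosmetic difference is that you extract that vector by a dimension count and verify the reverse inequality directly, whereas the paper constructs it by an explicit linear combination and then appeals to the equality case of the subadditivity of $\lambda_{\min}$; your version is, if anything, slightly more self-contained.
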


\begin{proof}
Assume that $\lambda_{\min{}}(A_{N'})$ is not simple, that is, the minimal eigenspace $E_{\min{}}(A_{N'})$ has dimension at least $2$. We first argue that there exists a nonzero $w \in E_{\min{}}(A_{N'})$ such that $w\perp \mathbf{v}_0$. Consider any two linearly independent vectors $v_1 = a_1\mathbf{v}_0 + b_1w_1$ and $v_2 = a_2\mathbf{v}_0 + b_2w_2$ in $E_{\min{}}(A_{N'})$ with $w_1\perp \mathbf{v}_0$ and $w_2\perp \mathbf{v}_0$.  In case $a_1=0$ or $a_2=0$, we immediately obtain such a vector $w$. In case $a_1\not=0$ and $a_2\not=0$,  the vector $\frac{1}{a_1}v_1-\frac{1}{a_2}v_2$ represents such a vector $w$.

Since $w \perp \mathbf{v}_0$, it lies in the minimal eigenspace $E_{\min{}}(\mathbf{v}_0\mathbf{v}_0^\top)$ whose minimal eigenvalue is zero. This means $w\in E_{\min{}}(A_{N'}) \cap E_{\min{}}(\mathbf{v}_0\mathbf{v}_0^\top)$, so the inequality \eqref{eq:monotone_ineq} holds with equality, i.e.,
$\lambda_{\min{}}(A_{N})=\lambda_{\min{}}(A_{N'})$ for all $N\in [N',\infty]$.
\end{proof}

\begin{lemma} \label{lem:analytic_at_simple}
If $\lambda_{\min{}}(A_{N})$ is simple for some $N\in (0,\infty]$ , then $\K_{G,x}$ is analytic in a small neighbourhood of $N$.
\end{lemma}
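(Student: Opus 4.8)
The plan is to invoke the real-analytic implicit function theorem applied to the characteristic polynomial of $A_N$, using simplicity of the minimal eigenvalue to guarantee a non-vanishing $\lambda$-derivative. To treat the endpoint $N=\infty$ on the same footing as finite $N$, I would first reparametrise by $t := 1/N \in [0,\infty)$ and write $A(t) := A_\infty - 2t\,\mathbf{v}_0\mathbf{v}_0^\top$, which depends affinely---hence analytically---on $t$. Analyticity of $\K_{G,x}$ near $N$ then becomes analyticity of $t \mapsto \lambda_{\min{}}(A(t))$ near $t_0 := 1/N$; since $N \mapsto 1/N$ is analytic on $(0,\infty)$ this transfers back to analyticity in $N$, and for the case $N=\infty$ the statement is naturally read in the coordinate $t$ near $t_0 = 0$.

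Set $\lambda_0 := \lambda_{\min{}}(A(t_0))$ and consider $p(\lambda,t) := \det(\lambda\,{\rm Id} - A(t))$. Because the entries of $A(t)$ are affine in $t$, the function $p$ is a polynomial in $(\lambda,t)$ jointly, so it is real-analytic, and for each fixed $t$ it is monic of degree $m$ in $\lambda$. By hypothesis $\lambda_0$ is a \emph{simple} eigenvalue, hence a simple root of $p(\cdot,t_0)$, so $\partial_\lambda p(\lambda_0,t_0) \neq 0$. The analytic implicit function theorem then provides an open neighbourhood of $t_0$ and an analytic function $\lambda(t)$ with $\lambda(t_0)=\lambda_0$ and $p(\lambda(t),t)=0$ throughout; that is, $\lambda(t)$ is an eigenvalue of $A(t)$ depending analytically on $t$.

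It remains to check that this analytic branch coincides with the \emph{minimal} eigenvalue for all $t$ near $t_0$, and not merely at $t_0$. Here I would use the continuity of the entire ordered eigenvalue list recalled in the proof of Proposition \ref{prop:curv_fct_cont_limits}. Since $\lambda_0$ is simple, there is a spectral gap $\delta>0$ separating $\lambda_0$ from the other eigenvalues of $A(t_0)$; by continuity of all eigenvalues in $t$, after shrinking the neighbourhood the quantity $\lambda_{\min{}}(A(t))$ stays within $\delta/3$ of $\lambda_0$ while every other eigenvalue stays at distance more than $\delta/3$ from $\lambda_0$. Thus $\lambda_{\min{}}(A(t))$ is the unique root of $p(\cdot,t)$ in a small disc about $\lambda_0$, and by the local uniqueness in the implicit function theorem this forces $\lambda_{\min{}}(A(t)) = \lambda(t)$. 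Undoing the substitution gives $\K_{G,x}(N') = \lambda(1/N')$ analytic for $N'$ near $N$.

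The only genuinely delicate point is this last identification: the implicit function theorem by itself produces \emph{some} analytic eigenvalue branch through $(\lambda_0,t_0)$, and one must exclude the possibility that the label ``minimal'' jumps to a different branch at nearby parameters (which is exactly what happens at a non-simple eigenvalue, cf.\ Lemma \ref{lem:notsimple_eigenvalue}). Simplicity of $\lambda_0$ together with continuity of the spectrum closes this gap. Everything else---the affine, hence analytic, dependence of $A(t)$ on $t$ and the non-vanishing of $\partial_\lambda p$ at a simple root---is routine.
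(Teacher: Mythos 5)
Your proposal is correct and follows essentially the same route as the paper: reparametrise by $t=1/N$, apply the analytic implicit function theorem to the characteristic polynomial at the simple root $\lambda_0$, and then use simplicity plus continuity of the spectrum to identify the resulting analytic branch with $\lambda_{\min{}}(A(t))$ near $t_0$. Your spectral-gap argument for that last identification is in fact slightly more explicit than the paper's, which merely asserts it.
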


\begin{proof}
The idea is to prove analyticity by using the implicit function theorem. More precisely, we aim to apply \cite[Theorem 6.1.2]{Krantz-Parks}.
Consider the matrix-valued function $A(t)=A_{1/t}$, and denote $\lambda_0(t) \le \lambda_1(t) \le ... \le \lambda_{m-1}(t)$ to be all eigenvalues of $A(t)$. Let $t_0=\frac{1}{N}$ and assume that $\lambda_0(t_0)=\lambda_{\min{}}(A_{N})$ is simple. Consider the following polynomial in $t$ and $\lambda$:
\[ F(t,\lambda):=\det(A(t_0+t) -(\lambda_{0}(t_0)+\lambda){\rm Id})=\sum_{i,j} a_{i,j} t^i\lambda^j. \]
The characteristic polynomial factorization gives
\[ F(0,\lambda)=\det(A(t_0)-(\lambda_{0}(t_0)+\lambda){\rm Id})=\prod_{i=0}^{m-1} (\lambda_i(t_0) - (\lambda_0(t_0)+\lambda))=\lambda \prod_{i=1}^{m-1} (\lambda_i(t_0)-\lambda_0(t_0)-\lambda),\]
which means $a_{0,0}=0$, and $a_{0,1}\not = 0$ since $\lambda_0(t_0)\not=\lambda_i(t_0)$ for $i\ge 1$.
The analytic implicit function theorem asserts that there exists an analytic function $\lambda(t)$ around $t=0$ such that $\lambda(0)=0$ and $F(t, \lambda(t))=0$ for all $t$ near $0$, that is, $\lambda_0(t_0)+\lambda(t)$ is an eigenvalue of $A(t_0+t)$. Moreover, the assumption that $\lambda_0(t_0)$ is a simple and smallest eigenvalue of $A(t_0)$ implies that $\lambda_0(t_0)+\lambda(t)$ stays the smallest eigenvalue of $A(t_0+t)$ for $t$ near $0$.
\end{proof}

\begin{lemma} \label{lem:threshold_nonsimp}
If $\lambda_{\min{}}(A_{N_1})$ is not simple for some $N_1\in (0,\infty]$, then there exists the smallest such $N_1$, and consequently $K_{G,x}$ is analytic, strictly monotone increasing and strictly concave on $(0,N_1]$, and constant on $[N_1,\infty]$.
\end{lemma}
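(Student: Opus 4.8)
The plan is to organise the proof around the three preceding results---the continuity and limit behaviour from Proposition \ref{prop:curv_fct_cont_limits}, the ``constancy to the right'' from Lemma \ref{lem:notsimple_eigenvalue}, and the local analyticity at simple eigenvalues from Lemma \ref{lem:analytic_at_simple}---glued together by the identity theorem for real-analytic functions. First I would introduce the set $S := \{N \in (0,\infty] : \lambda_{\min{}}(A_N)\text{ is not simple}\}$, which is nonempty by hypothesis, and put $N_1 := \inf S$. I would show $N_1 > 0$ as follows: if $N_1 = 0$, there is a sequence in $S$ tending to $0$, and Lemma \ref{lem:notsimple_eigenvalue} forces $\K_{G,x}$ to be constant on $[N',\infty]$ for each such $N'$, hence constant on all of $(0,\infty]$; this contradicts $\lim_{N\to 0}\K_{G,x}(N) = -\infty$ from Proposition \ref{prop:curv_fct_cont_limits}.

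Next I would pin down the structure on each side of $N_1$. Since $N_1 = \inf S$, every $N \in (0,N_1)$ has $\lambda_{\min{}}(A_N)$ simple, so the local analytic functions produced by Lemma \ref{lem:analytic_at_simple} patch to show $\K_{G,x}$ is analytic on $(0,N_1)$. For any $N > N_1$ there is $N' \in S$ with $N' < N$, so Lemma \ref{lem:notsimple_eigenvalue} gives $\K_{G,x}(N) = \K_{G,x}(\infty)$; thus $\K_{G,x}$ is constant on $(N_1,\infty]$, and by continuity on $[N_1,\infty]$. To confirm that $N_1$ is genuinely attained in $S$, I would argue by contradiction: if $\lambda_{\min{}}(A_{N_1})$ were simple, Lemma \ref{lem:analytic_at_simple} would make $\K_{G,x}$ analytic on a two-sided neighbourhood of $N_1$; but $\K_{G,x}$ is constant on $(N_1,N_1+\delta)$, so the identity theorem would force it constant on $(0,N_1)$ too, contradicting the limit $-\infty$. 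Hence $N_1 \in S$ and it is the smallest such value.

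The remaining task is to upgrade the (non-strict) monotonicity and concavity of Proposition \ref{prop:curv_fct_cont_limits} to strict statements on $(0,N_1]$. Here the mechanism is again the real-analyticity on $(0,N_1)$: if the derivative $\K_{G,x}'$ vanished on a subinterval, or the second derivative $\K_{G,x}''$ vanished on a subinterval, then by the identity theorem $\K_{G,x}$ would be constant, respectively affine, on all of $(0,N_1)$, and either conclusion contradicts $\lim_{N\to 0}\K_{G,x}(N) = -\infty$. Therefore $\K_{G,x}' > 0$ and $\K_{G,x}'' < 0$ on $(0,N_1)$, so $\K_{G,x}$ is affine on no subinterval of $(0,N_1]$. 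For a globally concave function this non-affineness is exactly equivalent to the strict chord inequality (a concave function meeting a chord at an interior point must coincide with that chord on the whole subinterval), and this characterisation also covers the right endpoint $N_1$. This yields strict monotonicity and strict concavity on the closed interval $(0,N_1]$.

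The step I expect to be most delicate is the behaviour exactly at $N_1$. Viewed on a two-sided neighbourhood of $N_1$ the minimal eigenvalue has a genuine corner (analytic and strictly increasing from the left, constant from the right), so one must argue one-sidedly: the analytic branch of $\lambda_{\min{}}$ tracked on $(0,N_1)$ is distinct from the branch it meets at $N_1$, and to claim analyticity up to and including $N_1$ one observes that no crossing occurs inside $(0,N_1)$, so this single analytic branch---furnished by the implicit function theorem in Lemma \ref{lem:analytic_at_simple}---continues analytically through $t = 1/N_1$. Once this one-sided analytic continuation and the identity-theorem arguments are in place, the rest is routine.
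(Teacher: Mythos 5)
Your proposal is correct and follows essentially the same route as the paper: define $N_1$ as the infimum of the set where $\lambda_{\min}(A_N)$ is not simple, use Lemma \ref{lem:notsimple_eigenvalue} to get constancy on $(N_1,\infty]$ and $N_1>0$, Lemma \ref{lem:analytic_at_simple} for analyticity on $(0,N_1)$, and the identity theorem together with $\lim_{N\to 0}\K_{G,x}(N)=-\infty$ to upgrade concavity and monotonicity to their strict versions. Two minor remarks: the paper shows $N_1$ is attained more directly (simplicity of the smallest eigenvalue is an open condition in $N$, so if $\lambda_{\min}(A_{N_1})$ were simple the infimum could not equal $N_1$), whereas your identity-theorem detour also works; and your intermediate claim that $\K_{G,x}''<0$ pointwise on $(0,N_1)$ does not actually follow (the identity theorem only rules out $\K_{G,x}''$ vanishing on a subinterval, leaving isolated zeros possible), but this is harmless since ``affine on no subinterval'' is all you use, exactly as in the paper.
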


\begin{proof}
Consider the set
\[ \N_{\rm ns} := \{N\in (0,\infty]:\ \lambda_{\min{}}(A_N) \text{ is not simple}\}, \]
and denote $N_1:=\inf \N_{\rm ns}$.

We know from Lemma \ref{lem:notsimple_eigenvalue} that $\K_{G,x}$ is constant on $[N,\infty]$ for all $N\in \N_{\rm ns}$. Therefore, $\K_{G,x}$ is constant on $(N_1,\infty]$. Note that $N_1>0$; otherwise $\K_{G,x}$ is constant on the whole interval $(0,\infty]$, which contradicts to the fact from Proposition \ref{prop:curv_fct_cont_limits} that $\lim_{N\to 0} \K_{G,x}(N)=-\infty$.

If $\lambda_{\min{}}(A_{N_1})$ were simple, then $\lambda_{\min{}}(A_{N})$ would also be simple for all $N$ in a small neighbourhood of $N_1$. This contradicts to the definition of $N_1$. Therefore, $\lambda_{\min{}}(A_{N_1})$ is not simple, and $N_1=\min \N_{\rm ns}$.

Since $\lambda_{\min{}}(A_{N})$ is simple for all $N\in(0,N_1)$, we know from Lemma \ref{lem:analytic_at_simple} that $\K_{G,x}$ is analytic on $(0,N_1)$.
Recall also from Proposition \ref{prop:curv_fct_cont_limits} that $\K_{G,x}$ is concave and monotone increasing. If $\K_{G,x}$ were not strictly concave on $(0,N_1)$, this would mean $\K_{G,x}$ is linear on some interval $[a,b] \subset (0,N_1)$. Then the analyticity of $\K_{G,x}$ on $(0,N_1)$ would then imply that $\K_{G,x}$ is linear on the entire interval $(0,N_1)$, which contradicts to the fact that $\lim_{N\to 0} \K_{G,x}(N)=-\infty$. Thus $\K_{G,x}$ is indeed strictly concave on $(0,N_1)$, and consequently it is strictly monotone increasing on $(0,N_1)$. This finishes the proof of Lemma \ref{lem:threshold_nonsimp}.
\end{proof}

By combining Proposition \ref{prop:curv_fct_cont_limits} and Lemmas \ref{lem:analytic_at_simple} and \ref{lem:threshold_nonsimp}, we can conclude Theorem \ref{thm:continuous_and_threshold} with the description of the threshold $N_1\in(0,\infty]$, namely $N_1=\min \{N\in (0,\infty]:\ \lambda_{\min{}}(A_N) \text{ is not simple}\}$ (and $N=\infty$ in case this set is empty).

Let us end this section with the proof of Proposition \ref{cor: N_threshold} about the uniqueness of the threshold $N_0$ such that $\K_{G,x}(N_0)=0$, which is asserted by the intermediate value theorem for the continuous curvature function $\K_{G,x}:(0,\infty] \to \IR$.

\begin{proof}[Proof of Proposition \ref{cor: N_threshold}]
Since $\K_{G,x}(\infty)>0$ (by assumption) and $\lim_{N\to 0}\K_{G,x}(N) = -\infty$, the intermediate value theorem asserts that there exists an $N_0\in (0,\infty)$ such that $\K_{G,x}(N_0)=0$. This implies $\det A_{N_0}=0$.

Furthermore, $\K_{G,x}(\infty)>0$ means $\det A_\infty>0$ and $A_\infty$ is invertible. The matrix determinant formula then gives
\begin{align} \label{eq:matrix_det_formula}
0=\det A_{N_0} =\det(A_\infty-\frac{2}{N_0}\mathbf{v}_0\mathbf{v}_0^\top)=(1-\frac{2}{N_0}\mathbf{v}_0^\top A_\infty^{-1}\mathbf{v}_0) \det A_\infty.
\end{align}
Therefore, $N_0$ is uniquely given by
$N_0=2\mathbf{v}_0^\top A_\infty^{-1}\mathbf{v}_0$.
\end{proof}

\section{Curvature bounds and curvature sharpness} \label{sect:bounds}
\begin{proof}[Proof of Theorem \ref{thm:lower_upper_bound}]
We derive the lower curvature bound via the Rayleigh quotient as follows:
\begin{align*}
\K_{G,x}(N) = \inf_{v\not=0} \frac{v^\top (A_\infty - \frac{2}{N}\mathbf{v}_0\mathbf{v}_0^\top) v}{v^\top v}
\ge \inf_{v\not=0} \frac{v^\top A_\infty v}{v^\top v} - \frac{2}{N} \sup_{v\not=0}  \frac{v^\top \mathbf{v}_0\mathbf{v}_0^\top v}{v^\top v}
= \K_{G,x}(\infty) -\frac{2}{N}\mathbf{v}_0^\top\mathbf{v}_0,
\end{align*}
where $\mathbf{v}_0^\top\mathbf{v}_0=\sum_{y\in S_1(x)} p_{xy} = \frac{d_x}{\mu_x}$.

On the other hand, the upper curvature bound can be derived as
\begin{equation} \label{eq:upper_bound_Rayleigh}
\K_{G,x}(N) \le  \frac{\mathbf{v}_0^\top A_N \mathbf{v}_0}{\mathbf{v}_0^\top \mathbf{v}_0} = \frac{\mathbf{v}_0^\top (A_\infty - \frac{2}{N}\mathbf{v}_0\mathbf{v}_0^\top) \mathbf{v}_0}{\mathbf{v}_0^\top \mathbf{v}_0}
= \frac{\mathbf{v}_0^\top A_\infty \mathbf{v}_0}{\mathbf{v}_0^\top \mathbf{v}_0} - \frac{2}{N}\mathbf{v}_0^\top\mathbf{v}_0.
\end{equation}
Lemma \ref{lem:K0infty} in Appendix \ref{sect:appendix} confirms that
\[ \K^{0}_{\infty}(x): = \frac{\mathbf{v}_0^\top A_\infty \mathbf{v}_0}{\mathbf{v}_0^\top \mathbf{v}_0} = \frac{1}{2}\left( \frac{d_x}{\mu_x}+ 3\frac{\mu_x}{d_x} p_{xx}^{(2)} - \frac{\mu_x}{d_x} \sum_{z\in S_2(x)} p_{xz}^{(2)}\right) . \]
\end{proof}

\begin{remark}\label{remark:K0infty}
In the case of non-weighted graphs, the quantity $\K_{\infty}^0(x)$ reduces to the one in \cite[Definition 3.2]{CLP20}. Indeed, we have in that case
\begin{align*}
\K_\infty^0(x)=\frac{1}{2}\left(d_x+3-\frac{1}{d_x}\sum_{y\in S_1(x)}d_y^+\right)=2+\frac{1}{2}\left(d_x-\frac{1}{d_x}\sum_{y\in S_1(x)}d_y\right)+\frac{\sharp_{\triangle}(x)}{d_x},
\end{align*}
where $d_y^+$ is the out-degree of $y\in S_1(x)$ (i.e., the number of neighbours of $y$ in $S_2(x)$) and $\sharp_{\triangle}(x)$ denotes the number of triangles containing $x$.
\end{remark}

\section{Relations between the spectrum of the curvature matrix $A_\infty$ and the curvature function $\K_{G,x}$} \label{sect:spectral_relation}


\begin{proof}[Proof of Proposition \ref{prop:v0_eigenvalue} and Proposition \ref{prop:curv_sharp_shape}] \ \\
The vertex $x$ is $N$-curvature sharp if and only if the upper bound \eqref{eq:upper_bound_Rayleigh}: $\lambda_{\min{}}(A_N)
\le \frac{\mathbf{v}_0^\top A_N \mathbf{v}_0}{\mathbf{v}_0^\top \mathbf{v}_0}$ holds with equality, which happens if and only if $\mathbf{v}_0$ is in the minimal eigenspace $E_{\min{}}(A_N)$. In particular, $x$ is $\infty$-curvature sharp if and only if $\mathbf{v}_0 \in E_{\min{}}(A_\infty)$. This proves Proposition \ref{prop:v0_eigenvalue} (ii).

Assume $x$ is $N_1$-curvature sharp for some $N_1\in (0,\infty]$. Then $A_{N_1} \mathbf{v}_0= \lambda_{\min{}}(A_{N_1}) \mathbf{v}_0$, which implies $A_\infty \mathbf{v}_0 = (\lambda_{\min{}}(A_{N_1})+\frac{2}{N_1}\mathbf{v}_0^\top\mathbf{v}_0) \mathbf{v}_0$, that is, $\mathbf{v}_0$ is an eigenvector of $A_\infty$.

Conversely, assume $\mathbf{v}_0$ is an eigenvector of $A_\infty$, that is, $A_\infty \mathbf{v}_0 = \lambda \mathbf{v}_0$ for some $\lambda\in \IR$. Denote the spectrum of $A_\infty$ by $\sigma(A_\infty)=\{\lambda, \lambda_1,...,\lambda_{m-1}\}$ with $\lambda_1 \le ... \le \lambda_{m-1}$. Consider $A_\infty v_i=\lambda_i v_i$ where all eigenvectors $v_i$ of $A_\infty$ (different from $\mathbf{v}_0$) are chosen to be orthogonal to $\mathbf{v}_0$. We then obtain for any $N$,
\begin{align*}
A_N \mathbf{v}_0 &= (A_\infty-\frac{2}{N}\mathbf{v}_0\mathbf{v}_0^\top) \mathbf{v}_0 = (\lambda-\frac{2}{N}\mathbf{v}_0^\top\mathbf{v}_0) \mathbf{v}_0;\\
A_N v_i &= (A_\infty-\frac{2}{N}\mathbf{v}_0\mathbf{v}_0^\top) v_i=A_\infty v_i =\lambda_i v_i \qquad \forall 1\le i <m,
\end{align*}
which mean its spectrum is $\sigma(A_N)=\{\lambda-\frac{2}{N}\mathbf{v}_0^\top\mathbf{v}_0, \lambda_1,...,\lambda_{m-1}\}$.

We choose the threshold $N_1=\frac{2\mathbf{v}_0^\top\mathbf{v}_0}{\lambda - \lambda_1}$ in case $\lambda \ge \lambda_1$ (and choose $N_1=\infty$ if $\lambda < \lambda_1$), so that
\begin{equation*}
\lambda_{\min{}}(A_N)  =
\begin{cases}
\lambda-\frac{2}{N}\mathbf{v}_0^\top\mathbf{v}_0 &\text{ if } N \le N_1, \\
\lambda_1 &\text{ if } N \ge N_1.
\end{cases}
\end{equation*}
This means for all $N\le N_1$, $\mathbf{v}_0 \in E_{\min{}}(A_N)$, that is, $x$ is curvature sharp on $(0,N_1]$. This proves Proposition \ref{prop:v0_eigenvalue} (i). Furthermore, for all $N \ge N_1$, $\lambda_{\min{}}(A_N)=\lambda_1=\lambda_{\min{}}(A_\infty)$, that is, $\K_{G,x}$ is constant on $[N_1,\infty]$. This proves the two forward statements of Proposition \ref{prop:curv_sharp_shape}.

To verify the converse statement in Proposition \ref{prop:curv_sharp_shape}, suppose that $\K_{G,x}(N)=c-\frac{2}{N}\frac{d_x}{\mu_x}$ for all $N\in (N',N'')$ and hence at $N=N',N''$ by continuity of $\K_{G,x}$. We observe that
\begin{align*}
c-\frac{2}{N''}\frac{d_x}{\mu_x}
= \lambda_{\min{}}(A_{N''}) &=\lambda_{\min{}}\left(A_{N'}+\Big(\frac{2}{N'}-\frac{2}{N''}\Big)\v_0\v_0^\top\right) \\
&=\inf_{v\not=0} \left( \frac{v^\top A_{N'} v}{v^\top v} + \Big(\frac{2}{N'}-\frac{2}{N''}\Big) \frac{v^\top \v_0\v_0^\top v}{v^\top v} \right) \\
&\le \inf_{v\not=0} \frac{v^\top A_{N'} v}{v^\top v} + \Big(\frac{2}{N'}-\frac{2}{N''}\Big) \v_0^\top \v_0 \\
&=  \lambda_{\min{}}(A_{N'}) + \Big(\frac{2}{N'}-\frac{2}{N''}\Big) \frac{d_x}{\mu_x}
= c-\frac{2}{N''}\frac{d_x}{\mu_x},
\end{align*}
so the inequality holds with equality, which occurs when $\v_0\in E_{\min{}}(A_{N'})$. Consequently, it holds that $\v_0\in E_{\min{}}(A_{N''})$. So $x$ is $N''$-curvature sharp as desired.
\end{proof}

The next result is an interesting observation about the non-smallest eigenvalues of $A_N$, which is not included in the Introduction.

\begin{corollary} \label{cor:non-smallest_eigen_positive}
If $\K_{G,x}(\infty)>0$, then all of the non-smallest eigenvalues of $A_N$ are strictly positive for all dimensions $N\in (0,\infty]$.
\end{corollary}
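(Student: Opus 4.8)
The plan is to recognise that, by Theorem \ref{thm:eigenvalue_main}, the hypothesis $\K_{G,x}(\infty)>0$ is precisely the statement $\lambda_{\min}(A_\infty)>0$, i.e.\ $A_\infty \succ 0$. Writing $\mu_1 \le \mu_2 \le \cdots \le \mu_m$ for the eigenvalues of $A_\infty$, the assumption says exactly that $\mu_1 = \lambda_{\min}(A_\infty) > 0$. For $N=\infty$ there is nothing to prove since $A_\infty$ has all eigenvalues positive, so I would fix a finite $N$ and recall that $A_N = A_\infty - \frac{2}{N}\mathbf{v}_0\mathbf{v}_0^\top$ is $A_\infty$ minus a positive semidefinite rank-one matrix.

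The key observation is that this rank-one correction is invisible on the hyperplane $\mathbf{v}_0^\perp$: for every nonzero $v \perp \mathbf{v}_0$ we have $\mathbf{v}_0^\top v = 0$, so
\[
\frac{v^\top A_N v}{v^\top v} = \frac{v^\top A_\infty v - \frac{2}{N}(\mathbf{v}_0^\top v)^2}{v^\top v} = \frac{v^\top A_\infty v}{v^\top v} \ge \lambda_{\min}(A_\infty) = \mu_1 > 0 .
\]
Thus the Rayleigh quotient of $A_N$ is bounded below by $\mu_1$ on the $(m-1)$-dimensional subspace $\mathbf{v}_0^\perp$. I would then feed this into the max-min (Courant--Fischer) characterisation of the second-smallest eigenvalue,
\[
\lambda_2(A_N) = \max_{\dim W = m-1}\ \min_{0 \ne v \in W} \frac{v^\top A_N v}{v^\top v},
\]
and simply use $W = \mathbf{v}_0^\perp$ as a test subspace, which gives $\lambda_2(A_N) \ge \mu_1 > 0$. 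Since the eigenvalues are ordered $\lambda_1(A_N) \le \lambda_2(A_N) \le \cdots \le \lambda_m(A_N)$, every eigenvalue except possibly the smallest one, $\lambda_1(A_N)=\lambda_{\min}(A_N)=\K_{G,x}(N)$, is then bounded below by $\mu_1>0$, which is exactly the claimed conclusion. (Equivalently, one may invoke the interlacing theorem for a downward rank-one perturbation, $\mu_{i-1} \le \lambda_i(A_N) \le \mu_i$, and read off $\lambda_i(A_N) \ge \mu_1 > 0$ for all $i \ge 2$.)

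There is no serious obstacle here: the entire content is the vanishing of the perturbation on $\mathbf{v}_0^\perp$ combined with the variational description of eigenvalues via the Rayleigh quotient already used above. The only point requiring a moment of care is selecting the correct max-min form of Courant--Fischer for $\lambda_2$ and checking that the $(m-1)$-dimensional subspace $\mathbf{v}_0^\perp$ is admissible as a competitor there; the degenerate case $m=1$ is vacuous, and the possibility that $\lambda_{\min}(A_N)$ is non-simple causes no trouble, since the argument controls exactly those eigenvalues that are not the smallest.
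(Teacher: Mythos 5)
Your proof is correct, and it takes a genuinely different route from the paper's. The paper argues by contradiction: if some non-smallest eigenvalue $\lambda_i(A_{N'})$ were negative, then by continuity of eigenvalues in $N$ and the intermediate value theorem it would vanish at some $\hat{N}_0$; the matrix determinant formula forces $\hat{N}_0$ to coincide with the zero $N_0$ of the curvature function from Proposition \ref{cor: N_threshold}, so $\lambda_{\min{}}(A_{\hat{N}_0})=0$ would be non-simple, and Lemma \ref{lem:notsimple_eigenvalue} would then make $\K_{G,x}$ constant on $[\hat{N}_0,\infty]$, contradicting $\K_{G,x}(\hat{N}_0)=0<\K_{G,x}(\infty)$. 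You instead exploit directly that $A_N$ is a downward rank-one perturbation of $A_\infty$ and that the perturbation vanishes on the hyperplane $\mathbf{v}_0^\perp$, so Courant--Fischer (equivalently, rank-one interlacing $\mu_{i-1}\le\lambda_i(A_N)\le\mu_i$) gives $\lambda_i(A_N)\ge\lambda_{\min{}}(A_\infty)$ for all $i\ge 2$; since every non-smallest eigenvalue occurs at some position $i\ge 2$ in the ordering, the claim follows. Your argument is more elementary (no continuity-in-$N$, no determinant formula, no appeal to the constancy lemma) and yields the strictly stronger quantitative conclusion that all non-smallest eigenvalues of $A_N$ are bounded below by $\K_{G,x}(\infty)$ uniformly in $N$; what the paper's route buys is a tighter integration with the structures it has already set up, in particular identifying the only possible zero of $\det A_N$ with the threshold $N_0=2\mathbf{v}_0^\top A_\infty^{-1}\mathbf{v}_0$.
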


\begin{proof}
Let $\lambda_i(A_{N})$ denote the $i$-th smallest eigenvalue of $A_{N}$ (respecting multiplicity).
Assume for the sake of contradiction that there exist $N'\in (0,\infty)$ and $i\ge 2$ such that $\lambda_i(A_{N'}) \not=\lambda_{\min{}}(A_{N'})$ and $\lambda_i(A_{N'})<0$ . We also know from $\K_{G,x}(\infty)>0$ that $\lambda_i(A_{\infty}) >0$. Since $\lambda_i(A_{N})$ is continuous on $N$, the intermediate value theorem implies that $\lambda_i(A_{\hat{N}_0})=0$ for some $\hat{N}_0\in (N',\infty)$, and hence $\det(A_{\hat{N}_0})=0$. The matrix determinant formula $0=\det A_{\hat{N}_0} =(1-\frac{2}{\hat{N}_0}\mathbf{v}_0^\top A_\infty^{-1}\mathbf{v}_0) \det A_\infty$  with $\det A_\infty>0$ (because $\K_{G,x}(\infty)>0$) asserts that $\hat{N}_0=2\mathbf{v}_0^\top A_\infty^{-1}\mathbf{v}_0$, which is the same threshold as $N_0$ in Proposition \ref{cor: N_threshold}. In other words, $\lambda_{\min{}}(A_{\hat{N}_0})=0=\lambda_{i}(A_{\hat{N}_0})$ is not simple. By Lemma \ref{lem:threshold_nonsimp}, $\K_{G,x}$ must then be constant on $[\hat{N}_0,\infty)$, which is contradiction to the fact that $\K_{G,x}(\hat{N}_0) = 0< \K_{G,x}(\infty)$.
\end{proof}

Proposition \ref{prop:curv_sharp_shape} raises the question whether there exists a graph with a vertex $x$ which is not curvature sharp for any finite $N$ but nevertheless its curvature function is constant near infinity. The following example provides the answer.
\begin{example} \label{ex:Emin_perp_v0}
%
%

We consider the Cartesian product $P_3 \times P_2$, where $P_n$ is the path containing $n$ vertices.

\begin{figure}[h!]
\centering
\tikzstyle{every node}=[circle, draw, fill=black!20, inner sep=0pt, minimum width=4pt]
\begin{tikzpicture}[scale=1.5]

\foreach \y in {0,1}
	{\draw (0,\y) node{} -- (1,\y) node{} -- (2,\y) node{};}
\foreach \x in {0,1,2}
	{\draw (\x,0) node{} -- (\x,1) node{};}
\node at (1,0) [label=below: ${x}$, fill=black] {};
\end{tikzpicture}
\caption{Cartesian product of $P_3$ and $P_2$}
\label{fig:product_example}
\end{figure}
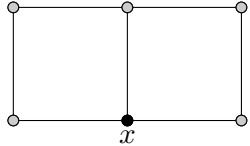

The curvature matrix at $x$ is given by \begin{align*}
A_\infty(x) =
	\left(\begin{array}{ccc}
	2 & 0 & 0 \\
	0 & 1.5 & 1  \\
	0 & 1 & 1.5
	\end{array}
	\right),
\end{align*} and it has the smallest eigenvalue of $0.5$. The vector $\v_0=(1 \ 1 \ 1)^\top$ is not an eigenvector, but it is perpendicular to the minimal eigenspace $E_{\min{}}(A_\infty(x))={\rm span}\ (0 \ 1 \ -1)^\top$.

The rank one perturbation $A_N(x)=A_\infty(x)- \frac{2}{N} \v_0\v_0^\top = A_\infty(x)- \frac{2}{N}J_{3}$ has its spectrum equal to $\sigma(A_N(x)) = \{ \frac{1}{2}, \ \frac{9}{4}-\frac{3}{N} \pm \sqrt{\frac{1}{16}-\frac{1}{2N}+\frac{9}{N^2}} \}$.
Therefore, the curvature function at $x$ is given by
\begin{equation*}
\K_{P_3\times P_2}(x) =
 \begin{cases}
\frac{9}{4}-\frac{3}{N} - \sqrt{\frac{1}{16}-\frac{1}{2N}+\frac{9}{N^2}} &\text{ if } N \in (0,\frac{10}{3}] \\
\frac{1}{2} &\text{ if } N \in [\frac{10}{3},\infty].
\end{cases}
\end{equation*}

\end{example}

\section{Curvature of Cartesian product of graphs} \label{sect:Cartesian}
Given two weighted graphs $G,G'$ and two fixed positive numbers $\alpha,\beta\in \IR^{+}$, the weighted Cartesian product $G\times_{\alpha,\beta} G'$ is defined with the following weight function and vertex measure: for $x,y\in G$ and $x',y'\in G'$,
\begin{align*}
w_{(x,x')(y,x')} &:= \alpha w_{xy} \mu_{x'},\\
w_{(x,x')(x,y')} &:= \beta w_{x'y'} \mu_{x},\\
\mu_{(x,x')} &:=\mu_x \mu_{x'}.
\end{align*}
One can translate the above definition into the transition rate $p$ as
\begin{align*}
p_{(x,x')(y,x')} &= \alpha \frac{w_{xy}\mu_{x'}}{\mu_x \mu_{x'}} = \alpha p_{xy},\\
p_{(x,x')(x,y')} &= \beta p_{x'y'},\\
\frac{d_{(x,x')}}{\mu_{(x,x')}} &= \sum_{y} p_{(x,x')(y,x')} + \sum_{y'} p_{(x,x')(x,y')} = \alpha \frac{d_{x}}{\mu_{x}}+ \beta \frac{d_{x'}}{\mu_{x'}}.
\end{align*}
Here we use the same symbols $w,\mu,p$ and $d$ for all graphs $G$, $G'$ and its product, where the associated graph can be determined from the input vertices. With this idea, we also use the notations $A_\infty(\cdot), A_N(\cdot)$ and $Q(\cdot)$. This simplifies our notations without making them ambiguous.

\begin{proof}[Proof of Theorem \ref{thm:cartesian_curv_matrix}]

Now the central vertex is $(x,x')$ with horizontal neighbours $(y,x')$ for $y\in S_1(x)$ and vertical neighbours $(x,y')$ for $y'\in S_1(x')$.
Note also that $(y,x')$ and $(x,y')$ are not adjacent but sharing one common neighbour in $S_2$, namely $(y,y')$.  On the other hand, the vertex $(y,y')$ has exactly two neighbours in $S_1$, namely $(y,x')$ and $(x,y')$. The transition rate on each edge are presented in the following scheme.

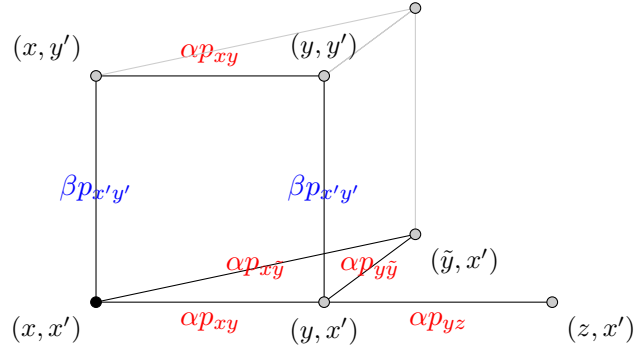
\begin{figure}[!htp]
\centering
\tikzset{vertex/.style={circle, draw, fill=black!20, inner sep=0pt, minimum width=4pt}}
\begin{tikzpicture}[scale=3.0]

\draw (0,0) -- (1,0) node[midway, below, red]{$\alpha p_{xy}$}
		 -- (1,1) node[midway, blue]{$\beta p_{x'y'}$}
		 -- (0,1) node[midway, above, red]{$\alpha p_{xy}$}
		 -- (0,0) node[midway, blue]{$\beta p_{x'y'}$};
\draw (0,0) -- (1.4,0.3) node[midway, red]{$\alpha p_{x\tilde{y}}$}
		 -- (1,0) node[midway, red]{$\alpha p_{y\tilde{y}}$}
		 -- (2,0) node[midway, below, red]{$\alpha p_{yz}$} ;
\draw[black!20] (0,1) -- (1.4,1.3) -- (1,1) -- (1.4,1.3) -- (1.4,0.3);

\node at (0,0) [vertex, label={[label distance=0mm]225: \small $(x,x')$}, fill=black] {};
\node at (1,0) [vertex, label={[label distance=0mm]270: \small $(y,x')$}] {};
\node at (0,1) [vertex, label={[label distance=0mm]135: \small $(x,y')$}] {};
\node at (1,1) [vertex, label={[label distance=0mm]90: \small $(y,y')$}] {};
\node at (2,0) [vertex, label={[label distance=0mm]315: \small $(z,x')$}] {};
\node at (1.4,0.3) [vertex, label={[label distance=0mm]315: \small $(\tilde{y},x')$}] {};
\node at (1.4,1.3) [vertex] {};

\end{tikzpicture}
\caption{The scheme showing a horizontal neighbour and a vertical neighbour of the central vertex $(x,x')$ in the Cartesian product $G \times_{\alpha,\beta} G'$ and transition rate $p$ on each edge.}
\label{fig:product_scheme}
\end{figure}

For $y\in S_1(x)$, we have from \eqref{eq:Q_ondiag} that
\begin{align*}
&4Q((x,x'))_{(y,x')(y,x')} \\
& = 2p_{(x,x')(y,x')}^2+3p_{(x,x')(y,x')}p_{(y,x')(x,x')} - \frac{d_{(x,x')}}{\mu_{(x,x')}}p_{(x,x')(y,x')}\\
&\phantom{=}+
	3p_{(x,x')(y,x')} \Bigl( \sum_{\mathclap{z\in S_2(x)}} p_{(y,x')(z,x')} +  \sum_{\mathclap{y'\in S_1(x')}} p_{(y,x')(y,y')} \Bigr) \\
& \phantom{=} + \sum_{\tilde{y}\in S_1(x)} (3p_{(x,x')(y,x')}p_{(y,x')(\tilde{y},x')} + p_{(x,x')(\tilde{y},x')}p_{(\tilde{y},x')(y,x')}) \\
& \phantom{=} - 4  \sum_{z\in S_2(x)} \frac{p_{(x,x')(y,x')}^2 p_{(y,x')(z,x')}^2}{\sum_{y\in S_1(x)}p_{(x,x')(y,x')} p_{(y,x')(z,x')}} \\
& \phantom{=}- 4\sum_{y'\in S_1(x')} \frac{p_{(x,x')(y,x')}^2 p_{(y,x')(y,y')}^2}{p_{(x,x')(y,x')} p_{(y,x')(y,y')} + p_{(x,x')(x,y')} p_{(x,y')(y,y')}  } \\
&= 2 \alpha^2 p_{xy}^2 + 3\alpha^2 p_{xy}p_{yx}- (\alpha \frac{d_{x}}{\mu_{x}}+ \beta \frac{d_{x'}}{\mu_{x'}})(\alpha p_{xy}) +
	3\alpha p_{xy}(\sum_{z\in S_2(x)} \alpha p_{yz}+ \sum_{y'\in S_1(x')} \beta p_{x'y'})\\
	&\phantom{=}+\alpha^2\sum_{\tilde{y}\in S_1(x)} (3p_{xy} p_{y\tilde{y}} + p_{x\tilde{y}} p_{\tilde{y}y})
	- 4\Bigl( \alpha^2\sum_{z\in S_2(x)} \frac{p_{xy}^2p_{yz}^2}{p_{xz}^{(2)}}+ \sum_{y'\in S_1(x')} \frac{(\alpha p_{xy})^2(\beta p_{x'y'})^2}{2\alpha\beta p_{xy} p_{x'y'}} \Bigr)\\
&= 4\alpha^2 Q(x)_{yy}.
\end{align*}

And similarly, $4Q((x,x'))_{(x,y')(x,y')} = 4\beta^2 Q(x)_{y'y'}$ for $y'\in S_1(x')$.

For $y_i\not=y_j\in S_1(x)$, we have from \eqref{eq:Q_offdiag} that
\begin{align*}
&4Q((x,x'))_{(y_i,x')(y_j,x')} \\
&= 2p_{(x,x')(y_i,x')}p_{(x,x')(y_j,x')} - 2p_{(x,x')(y_i,x')}p_{(y_i,x')(y_j,x')} - 2 p_{(x,x')(y_j,x')}p_{(y_j,x')(y_i,x')} \\
&\phantom{=} - 4 \sum_{z\in S_2(x)} \frac{ p_{(x,x')(y_i,x')} p_{(y_i,x')(z,x')} p_{(x,x')(y_j,x')} p_{(y_j,x')(z,x')} }{  \sum_{\tilde{y}\in S_1(x)} p_{(x,x')(\tilde{y},x')} p_{(\tilde{y},x')(z,x')}}\\
&=2\alpha^2 p_{xy_i}p_{xy_j} -  2\alpha^2 p_{xy_i}p_{y_iy_j} -  2\alpha^2 p_{xy_j}p_{y_jy_i} -4\sum_{z\in S_2(x)} \frac{ \alpha^4 p_{xy_i} p_{y_iz} p_{xy_j} p_{y_jz} }{  \sum_{\tilde{y}\in S_1(x)} \alpha^2 p_{x\tilde{y}} p_{\tilde{y}z}} \\
&= 4\alpha^2 Q(x)_{y_iy_j}.
\end{align*}
And similarly, $4Q((x,x'))_{(x,y'_i)(x,y'_j)} = 4\beta^2 Q(x)_{y'_iy'_j}$ for $y'_i\not=y'_j\in S_1(x')$.

For any $y\in S_1(x)$ and $y'\in S_1(x')$, we have from \eqref{eq:Q_offdiag} that
\begin{align*}
&4Q((x,x'))_{(y,x')(x,y')}\\
&= 2 p_{(x,x')(y,x')} p_{(x,x')(x,y')}- 4\frac{ p_{(x,x')(y,x')} p_{(y,x')(y,y')} p_{(x,x')(x,y')} p_{(x,y')(y,y')} }{ p_{(x,x')(y,x')} p_{(y,x')(y,y')} + p_{(x,x')(x,y')} p_{(x,y')(y,y')}} \\
&= 2 \alpha \beta p_{xy}  p_{x'y'} - 4\frac{(\alpha \beta p_{xy}  p_{x'y'})^2}{2\alpha \beta p_{xy}  p_{x'y'}}  = 0.
\end{align*}

We can conclude from the above calculation that $Q((x,x'))=\alpha^2 Q(x) \oplus \beta^2 Q(y)$. Note also that the matrix $\diag \mathbf{v}_0((x,x')) = \sqrt{\alpha} \diag\mathbf{v}_0(x) \oplus \sqrt{\beta} \diag \mathbf{v}_0(x')$. Therefore, we derive the curvature matrix as
\[
A_\infty((x,x')) = 2\diag\mathbf{v}_0((x,x'))^{-1} Q((x,x')) \diag\mathbf{v}_0((x,x')) ^{-1} = \alpha A_\infty(x) \oplus \beta A_\infty(x'),
\]
as desired.
\end{proof}

Next we prove Theorem \ref{thm:cartesian_curv_fct}, which will be rephrased in a more abstract way. This will be useful in the next section when we discuss the Ricci curvature of weighted manifolds in an analogous manner.

\begin{theorem}\label{thm:eigen_product}
For $i\in \{1,2\}$, let $A_i$ be $m_i\times m_i$ symmetric matrices and $\mathbf{v}_i$ be vectors in $ \IR^{m_i}$. Given fixed weights $\alpha,\beta>0$, let $A$ and $\mathbf{v}$ be given as
\[A=\alpha A_1\oplus \beta A_2\,\,\text{and}\,\,\mathbf{v}=\sqrt{\alpha}\mathbf{v}_1\oplus \sqrt{\beta}\mathbf{v}_2.\]
For $N\in (0,\infty]$, consider
\[A_i(N):=A_i-\frac{2}{N}\mathbf{v}_i\mathbf{v}_i^\top\,\,\text{and}\,\,A(N):=A-\frac{2}{N}\mathbf{v}\mathbf{v}^\top.\]
Then we have
\begin{equation} \label{eq:product_ineq_general}
\min \{ \alpha\lambda_1, \beta\lambda_2 \} \le \lambda_{\min{}}(A(N_1+N_2)) \le \max \{ \alpha\lambda_1, \beta\lambda_2 \},
\end{equation}
where $\lambda_i:=\lambda_{\min{}}(A_i(N_i))$.
\end{theorem}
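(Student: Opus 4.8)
The plan is to exploit the block-diagonal structure of $A(N)$ together with the variational (Rayleigh quotient) description of $\lambda_{\min{}}$, which is the central tool used throughout the paper. The key observation is that $A(N) = A - \frac{2}{N}\mathbf{v}\mathbf{v}^\top$ does \emph{not} decompose as a direct sum (the rank-one perturbation $\mathbf{v}\mathbf{v}^\top$ couples the two blocks), so the argument cannot be a naive "spectrum of a direct sum is the union of spectra." Instead, I would set $N = N_1 + N_2$ with $N_1, N_2 > 0$ chosen so that $\alpha\lambda_1 = \alpha\lambda_{\min{}}(A_1(N_1))$ and $\beta\lambda_2 = \beta\lambda_{\min{}}(A_2(N_2))$ are the quantities appearing in the bound, and split the analysis into the upper and lower inequalities separately.

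For the \textbf{upper bound}, I would use test vectors. Let $u_1 \in \IR^{m_1}$ be a minimal eigenvector of $A_1(N_1)$, so $u_1^\top A_1(N_1) u_1 = \lambda_1 u_1^\top u_1$; take the vector $u = u_1 \oplus 0 \in \IR^{m_1 + m_2}$. Then, writing $A(N) = \alpha A_1 \oplus \beta A_2 - \frac{2}{N}\mathbf{v}\mathbf{v}^\top$ and using $\mathbf{v}^\top u = \sqrt{\alpha}\,\mathbf{v}_1^\top u_1$, a direct computation should give
\begin{equation*}
\frac{u^\top A(N) u}{u^\top u} = \frac{\alpha\, u_1^\top A_1 u_1 - \frac{2\alpha}{N}(\mathbf{v}_1^\top u_1)^2}{u_1^\top u_1}.
\end{equation*}
The point is to verify that this Rayleigh quotient equals $\alpha\lambda_1$ when $N = N_1 + N_2$; this requires relating the coupling term $\frac{2\alpha}{N}(\mathbf{v}_1^\top u_1)^2$ to the term $\frac{2\alpha}{N_1}(\mathbf{v}_1^\top u_1)^2$ that defines $\lambda_1$, so one likely needs to further engineer $N_1, N_2$ (or argue by a continuity/monotonicity sweep using Proposition \ref{prop:curv_fct_cont_limits}) so that the test vector achieves exactly $\alpha\lambda_1$, and symmetrically $\beta\lambda_2$; then $\lambda_{\min{}}(A(N)) \le \min$ of the two Rayleigh quotients, yielding $\lambda_{\min{}}(A(N)) \le \max\{\alpha\lambda_1, \beta\lambda_2\}$ after matching the two indices via the star-product defining relation $f_1(N_1) = f_2(N_2)$.

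For the \textbf{lower bound}, I would take a minimal eigenvector $w = w_1 \oplus w_2$ of $A(N)$ and estimate its Rayleigh quotient from below. Splitting $w^\top A(N) w$ into the block contributions $\alpha\, w_1^\top A_1 w_1 + \beta\, w_2^\top A_2 w_2$ minus the coupling term $\frac{2}{N}(\mathbf{v}^\top w)^2 = \frac{2}{N}(\sqrt{\alpha}\,\mathbf{v}_1^\top w_1 + \sqrt{\beta}\,\mathbf{v}_2^\top w_2)^2$, the main inequality should follow from distributing the perturbation across the two blocks: I expect to bound $\frac{2}{N}(\sqrt{\alpha}\,a + \sqrt{\beta}\,b)^2 \le \frac{2\alpha}{N_1}a^2 + \frac{2\beta}{N_2}b^2$ (valid precisely because $N = N_1 + N_2$, by the Cauchy--Schwarz / convexity inequality $(a+b)^2 \le \frac{N_1+N_2}{N_1}a^2 + \frac{N_1+N_2}{N_2}b^2$ with $a = \sqrt{\alpha}\,\mathbf{v}_1^\top w_1$, $b = \sqrt{\beta}\,\mathbf{v}_2^\top w_2$). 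This reduces the coupled quadratic form to the \emph{decoupled} one $\alpha\, w_1^\top A_1(N_1) w_1 + \beta\, w_2^\top A_2(N_2) w_2 \ge \alpha\lambda_1 (w_1^\top w_1) + \beta\lambda_2 (w_2^\top w_2) \ge \min\{\alpha\lambda_1,\beta\lambda_2\}(w^\top w)$, giving the lower bound.

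The \textbf{main obstacle} is the splitting of the rank-one coupling term across the two factors: the inequality $(a+b)^2 \le \frac{N}{N_1}a^2 + \frac{N}{N_2}b^2$ with $N = N_1+N_2$ is exactly the algebraic identity that makes the additive law $N = N_1 + N_2$ of the star product emerge, and getting the constants to line up so that the decoupled forms become genuinely $A_1(N_1)$ and $A_2(N_2)$ (rather than perturbations at some other dimension) is the delicate bookkeeping. Once \eqref{eq:product_ineq_general} is established, the identity $\K_{G\times_{\alpha,\beta} G',(x,x')} = (\alpha\K_{G,x}) \ast (\beta\K_{G',x'})$ follows by combining it with Theorem \ref{thm:cartesian_curv_matrix} and the defining property of the star product, namely that $N_1, N_2$ are chosen so that $\alpha\K_{G,x}(N_1) = \beta\K_{G',x'}(N_2)$, which forces both the min and the max in \eqref{eq:product_ineq_general} to coincide with this common value.
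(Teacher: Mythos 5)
Your lower bound is correct and is, in substance, the paper's own argument: the Cauchy--Schwarz splitting $(a+b)^2\le\frac{N_1+N_2}{N_1}a^2+\frac{N_1+N_2}{N_2}b^2$ is algebraically identical to the paper's observation that the coupling matrix $J$ in the decomposition $A(N_1+N_2)=\bigl(\alpha A_1(N_1)\oplus\beta A_2(N_2)\bigr)+\frac{2}{N_1+N_2}J$ satisfies $w^\top J w=\bigl(\sqrt{\alpha N_2/N_1}\,w_1^\top\mathbf{v}_1-\sqrt{\beta N_1/N_2}\,w_2^\top\mathbf{v}_2\bigr)^2\ge 0$; both yield $w^\top A(N_1+N_2)w\ge\alpha\,w_1^\top A_1(N_1)w_1+\beta\,w_2^\top A_2(N_2)w_2$ and hence the left inequality in \eqref{eq:product_ineq_general}.

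The genuine gap is in the upper bound. With your test vector $u=u_1\oplus 0$ the Rayleigh quotient is not $\alpha\lambda_1$; a direct computation gives
\begin{equation*}
\frac{u^\top A(N_1+N_2)u}{u^\top u}=\alpha\lambda_1+\alpha\Bigl(\frac{2}{N_1}-\frac{2}{N_1+N_2}\Bigr)\frac{(\mathbf{v}_1^\top u_1)^2}{u_1^\top u_1}\;\ge\;\alpha\lambda_1,
\end{equation*}
with strict inequality whenever $\mathbf{v}_1^\top u_1\neq 0$, so single-block test vectors overshoot and cannot prove $\lambda_{\min{}}(A(N_1+N_2))\le\max\{\alpha\lambda_1,\beta\lambda_2\}$. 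Your proposed remedies do not close this: $N_1$ and $N_2$ are \emph{given} in the statement, so you cannot re-engineer them, and invoking the star-product matching $f_1(N_1)=f_2(N_2)$ is circular, since that identity is \emph{deduced from} inequality \eqref{eq:product_ineq_general} via \cite[Proposition 7.3]{CLP20}, not available beforehand. The paper's fix is a genuinely mixed test vector $\mathbf{w}=c_1\mathbf{w}_1\oplus c_2\mathbf{w}_2$, where $\mathbf{w}_i$ is a unit minimal eigenvector of $A_i(N_i)$ and $c_1=\sqrt{\beta N_1/N_2}\,\mathbf{w}_2^\top\mathbf{v}_2$, $c_2=\sqrt{\alpha N_2/N_1}\,\mathbf{w}_1^\top\mathbf{v}_1$ are chosen precisely so that the square term $\bigl(c_1\sqrt{\alpha N_2/N_1}\,\mathbf{w}_1^\top\mathbf{v}_1-c_2\sqrt{\beta N_1/N_2}\,\mathbf{w}_2^\top\mathbf{v}_2\bigr)^2$ contributed by $J$ vanishes identically; the Rayleigh quotient then equals the convex combination $\frac{\alpha c_1^2\lambda_1+\beta c_2^2\lambda_2}{c_1^2+c_2^2}\le\max\{\alpha\lambda_1,\beta\lambda_2\}$. (In the degenerate case $c_1=c_2=0$ one has $\mathbf{w}_1^\top\mathbf{v}_1=\mathbf{w}_2^\top\mathbf{v}_2=0$, and there your single-block vector does work because the correction term above vanishes.)
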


\begin{proof}
Let us first consider the case $N_1,N_2\in (0,\infty)$. We have the matrix $\mathbf{v}\mathbf{v}^\top= \begin{pmatrix} \alpha \v_1 \v_1^\top & \sqrt{\alpha\beta}\v_1\v_2^\top \\ \sqrt{\alpha\beta}\v_2 \v_1^\top& \beta \v_2 \v_2^\top \end{pmatrix}$. It follows that
\begin{align*}
A(N_1+N_2) = \begin{pmatrix} \alpha A_1(N_1) & \\ & \beta A_2(N_2) \end{pmatrix}
+ \frac{2}{N_1+N_2} \underbrace{ \begin{pmatrix} \alpha \frac{N_2}{N_1} \v_1\v_1^\top & -\sqrt{\alpha\beta}\v_1\v_2^\top \\ -\sqrt{\alpha\beta}\v_2\v_1^\top& \beta \frac{N_1}{N_2}\v_2\v_2^\top \end{pmatrix} }_{=: J}.
\end{align*}
We want to verify that $J\succeq 0$, which will then imply the left inequality in \eqref{eq:product_ineq_general}.

For any vector $w=\begin{pmatrix} w_1 \\ w_2 \end{pmatrix}$ with $w_i\in \IR^{m_i}$, we have
\begin{align*}
w^\top J w
&= \alpha \frac{N_2}{N_1} w_1^\top \v_1\v_1^\top w_1+ \beta \frac{N_1}{N_2} w_2^\top \v_2\v_2^\top w_2 - 2 \sqrt{\alpha\beta}w_1^\top \v_1\v_2^\top w_2\\
&= \Bigl( \sqrt{\alpha \frac{N_2}{N_1}} w_1^\top \v_1 - \sqrt{\beta \frac{N_1}{N_2}} w_2^\top \v_2 \Bigr)^2 \ge 0.
\end{align*}
Thus $J\succeq 0$. Next we prove the right inequality in \eqref{eq:product_ineq_general}. For $i\in \{1,2\}$, we choose a unit eigenvector $\w_i$ such that $A_i \w_i = \lambda_i \w_i$ where $\lambda_i=\lambda_{\min{}}(A_i(N_i))$, and let $\w:=\begin{pmatrix} c_1\w_1 \\ c_2\w_2 \end{pmatrix}$ with arbitrary constants $c_i\not=0$. It follows from the Rayleigh quotient description that
\begin{multline*}
\lambda_{\min{}}(A(N_1+N_2))
\le \frac{\w^\top \bigl(\alpha A_1(N_1) \oplus \beta A_2(N_2) \bigr) \w + \frac{2}{N_1+N_2} \w^\top J  \w}{\w^\top \w} \\
= \frac{1}{c_1^2+c_2^2} \left(\alpha c_1^2  \lambda_1+ \beta c_2^2 \lambda_2 + \frac{2}{N_1+N_2} \Bigl( c_1\sqrt{\alpha \frac{N_2}{N_1}} \w_1^\top \v_1 - c_2\sqrt{\beta \frac{N_1}{N_2}} \w_2^\top \v_2 \Bigr)^2  \right).
\end{multline*}
We may choose $c_1=\sqrt{\beta \frac{N_1}{N_2}} \w_2^\top {v_2}$ and $c_2=\sqrt{\alpha \frac{N_2}{N_1}} \w_1^\top {v_1}$ so that the square term above becomes zero. As a result,
\begin{align*}
\lambda_{\min{}}(A(N_1+N_2)) \le \frac{\alpha c_1^2  \lambda_1+ \beta c_2^2 \lambda_2 }{c_1^2+c_2^2} \le \max \{ \alpha \lambda_1, \beta \lambda_2\},
\end{align*}
which finishes the proof of \eqref{eq:product_ineq_general}.
The case that $N_1$ or $N_2$ equals $\infty$ is not hard to prove by modifying the above argument.
\end{proof}

Now, Theorem \ref{thm:cartesian_curv_fct} follows from Theorem \ref{thm:eigen_product} and the following general fact \cite[Proposition 7.3]{CLP20} about star product.

\begin{proposition}[\cite{CLP20}] Let $f_1,f_2:(0,\infty]\to \mathbb{R}$ be continuous monotone non-decreasing functions with $\lim_{N\to 0}f_i(N)=-\infty, i=1,2$. Then a function $F:(0,\infty]\to \mathbb{R}$ satisfies $F=f_1\ast f_2$ if and only if it holds for any $N_1,N_2\in (0,\infty)$ that
\[\min\{f_1(N_1),f_2(N_2)\}\leq F(N_1+N_2)\leq \max\{f_1(N_1), f_2(N_2)\},\]
and $F(\infty)=\lim_{N\to \infty}F(N)$.
\end{proposition}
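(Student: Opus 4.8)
The plan is to exploit the defining property of the star product directly: for each finite $t$, the value $(f_1 \ast f_2)(t)$ is the common value $f_1(t_1) = f_2(t_2)$ at the unique \emph{balanced splitting} $t = t_1 + t_2$. First I would recall why this balanced splitting exists and is unique. For fixed $t \in (0,\infty)$, the map $s \mapsto f_1(s) - f_2(t-s)$ is continuous and strictly increasing on $(0,t)$, tends to $-\infty$ as $s \to 0$ (since $f_1(s) \to -\infty$ while $f_2(t-s) \to f_2(t)$) and to $+\infty$ as $s \to t$; by the intermediate value theorem it has a unique zero $t_1 \in (0,t)$, and I set $t_2 := t - t_1 \in (0,t)$. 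In particular both coordinates of the balanced splitting lie strictly inside $(0,\infty)$, so they are admissible choices of $N_1, N_2$ in the asserted inequality. The value of $f_1 \ast f_2$ at $\infty$ is, by the convention attached to $\ast$, its limit $\lim_{t\to\infty}(f_1\ast f_2)(t)$.

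For the forward implication I would assume $F = f_1 \ast f_2$ and fix arbitrary $N_1, N_2 \in (0,\infty)$. Writing $t := N_1 + N_2$ and letting $(t_1, t_2)$ be the balanced splitting of $t$, I compare the two splittings of $t$. If $N_1 \le t_1$ then necessarily $N_2 \ge t_2$, so monotonicity of $f_1$ and $f_2$ gives $f_1(N_1) \le f_1(t_1) = F(t)$ and $f_2(N_2) \ge f_2(t_2) = F(t)$; hence $\min\{f_1(N_1), f_2(N_2)\} = f_1(N_1) \le F(t) \le f_2(N_2) = \max\{f_1(N_1), f_2(N_2)\}$. The case $N_1 \ge t_1$ is symmetric. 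The equality $F(\infty) = \lim_{N\to\infty} F(N)$ then holds because the star product is, by its defining convention, continuous at $\infty$.

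For the backward implication I would assume the sandwich inequality holds for all $N_1, N_2 \in (0,\infty)$ together with $F(\infty) = \lim_{N\to\infty} F(N)$. Fixing a finite $t$ and applying the inequality to the balanced splitting $(N_1, N_2) = (t_1, t_2)$ of $t$, the two bounds coincide, since $f_1(t_1) = f_2(t_2) = (f_1\ast f_2)(t)$ forces $\min$ and $\max$ to agree; this pins down $F(t) = (f_1\ast f_2)(t)$. Thus $F$ and $f_1\ast f_2$ agree on $(0,\infty)$, and taking $N\to\infty$ together with the hypothesis on $F(\infty)$ and the continuity of $\ast$ at $\infty$ yields $F(\infty) = \lim_{N\to\infty} F(N) = \lim_{N\to\infty}(f_1\ast f_2)(N) = (f_1\ast f_2)(\infty)$, so the two functions agree on all of $(0,\infty]$.

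The monotonicity comparisons are entirely routine; the one point that genuinely needs care is the behavior at $N=\infty$. I expect the main (minor) obstacle to be pinning down the value of $f_1 \ast f_2$ at $\infty$ and checking it equals $\lim_{t\to\infty}(f_1\ast f_2)(t)$, since the displayed definition of $\ast$ only treats finite arguments. Here I would use that the $f_i$ are monotone with finite limits at $\infty$: as $t\to\infty$ the larger of $t_1, t_2$ must diverge, and the common value $f_1(t_1) = f_2(t_2)$ increases to $\min\{f_1(\infty), f_2(\infty)\}$, which identifies the limit and confirms the continuity of $\ast$ at $\infty$ invoked in both directions.
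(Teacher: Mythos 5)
The paper does not actually prove this proposition: it is imported verbatim as \cite[Proposition 7.3]{CLP20} and used as a black box to deduce Theorem \ref{thm:cartesian_curv_fct} from Theorem \ref{thm:eigen_product}, so there is no in-paper argument to compare yours against. Judged on its own, your proof is correct and is the natural one: existence of a balanced splitting $t=t_1+t_2$ with $f_1(t_1)=f_2(t_2)$ via the intermediate value theorem, the monotonicity sandwich for the forward direction, and evaluation at the balanced splitting to collapse $\min$ and $\max$ for the converse; your treatment of the endpoint $N=\infty$, identifying $(f_1\ast f_2)(\infty)$ with $\min\{f_1(\infty),f_2(\infty)\}=\lim_{t\to\infty}(f_1\ast f_2)(t)$, is also the right way to close the one genuinely delicate point. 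One small inaccuracy: the proposition assumes the $f_i$ are only monotone \emph{non-decreasing}, so $s\mapsto f_1(s)-f_2(t-s)$ need not be \emph{strictly} increasing and the zero $t_1$ need not be unique. The zero set is then a closed subinterval on which $f_1(s)$ is non-decreasing and $f_2(t-s)$ is non-increasing while the two agree, forcing both to be constant there; hence the common value $f_1(t_1)=f_2(t_2)$ is still well defined and your argument goes through unchanged after replacing ``unique zero'' by ``well-defined common value.''
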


\section{The case of weighted manifolds} \label{sect:weighted_manifolds}

In \cite[Section 1.6]{CLP20}, the authors briefly draw a comparison between the Bakry-\'Emery curvature functions of graphs and that of weighted Riemannian manifolds. Here, we discuss this comparison further by investigating the analogous result that the optimal lower Ricci curvature bound at a point on a weighted Riemannian manifold is also the minimal eigenvalue of a rank one perturbation of a curvature matrix.

A \emph{weighted Riemannian manifold} is a triple
$(M^n,g,e^{-V}d\mathrm{vol}_g)$, where $(M^n,g)$ is an $n$-dimensional
Riemannian manifold, $d\mathrm{vol}_g$ is the Riemannian
volume element, and $V$ is a smooth real valued function on $M^n$. The\emph{ $N$-Bakry-\'Emery Ricci tensor }of
$(M^n,g,e^{-V}d\mathrm{vol}_g)$ is defined to be
\begin{equation}\label{eq:RicciTensor}
\Ric_{N,V}:=\mathrm{Ric}+\mathrm{Hess}\,V-\frac{\grad V\otimes \grad V}{N-n},
\end{equation}
where $\mathrm{Ric}$ is the Ricci curvature tensor of $(M^n,g)$, $\mathrm{Hess}\,V$ is the Hessian of $V$, and $\grad V$ is the gradient of $V$ (\cite{Bakry94,BE85}). Using the $V$-Laplacian $\Delta_V:=\Delta_g-g(\grad V,\grad\cdot)$, where $\Delta_g$ is the Laplace-Beltrami operator on $(M^n,g)$, one can define the Bakry-\'Emery curvature-dimension inequality $CD(\K,N)$ (at any point $x\in M$) as in Definition \ref{defn:BEcurvature}. Then $CD(\K,N),N\in (n,\infty]$ (at a given point $x\in M$) holds if and only if $\Ric_{N,V} \geq \K$ (at $x\in M$) (see \cite[pp. 93--94]{Bakry94}).

\begin{definition}\label{defn:BEcurvatureMfld}
Let $(M^n,g,e^{-V}d\mathrm{vol}_g)$ be a weighted Riemannian manifold. For a given $N\in (n,\infty]$, the Bakry-\'Emery curvature $\K(M,V,x;N)$ at a point $x\in M$  is defined to be the largest $\K$ such that $CD(\K,N)$ holds at $x$. The function $\K_{M,V,x}: (0,\infty]\to \IR$ given by $\K_{M,V,x}(N):=\K(M,V,x;N+n)$ is called the Bakry-\'Emery curvature function of $(M^n,g,e^{-V}d\mathrm{vol}_g)$ at $x$.
\end{definition}

\begin{remark}\label{remark:curvature_function_mfld}
Recall $n$ is the dimension of the underlying Riemannian manifold. The purpose to define the curvature function $\K_{M,V,x}$ on the interval $(0,\infty]$ instead of on $(n,\infty]$ is to make it compatible with the graph case.
\end{remark}

When $V$ is constant, that is, when the curvature-dimension inequality is based on the Laplace-Beltrami operator $\Delta_g$, the dimension parameter $N$ in \eqref{eq:RicciTensor} can be equal to $n$, and the function $\K_{M,V,x}$ is a constant function on $[0,\infty]$ (see \cite[pp. 93--94]{Bakry94}, \cite[Appendix C.6]{BGL}). When $V$ is not constant, $\K_{M,V,x}(N)$ tends to $-\infty$ as $N$ tends to $0$.

Next we investigate the shape of the Bakry-\'Emery curvature function $\K_{M,V,x}$ at $x$ on a weighted Riemannian manifold $(M^n,g,e^{-V}d\mathrm{vol}_g)$. If $\grad V(x)=0$, then this function $\K_{M,V,x}$ is constant. In the sequel, we consider the case that $\grad V(x)\neq 0$.
Since $\Ric_{N,V}$ is a symmetric $(0,2)$-tensor, there exists a linear transformation $\A_{N-n}: T_xM \to T_xM$ from the tangent space $T_xM$ of $M$ at $x$ to itself, such that
\[\Ric_{N,V}(v,v)=g(\A_{N-n} v, v),\,\,\text{for any }\, v\in T_xM.\]
Therefore, the optimal lower Ricci curvature bound at $x$ can be expressed as the minimal eigenvalue: \[\K_{M,V,x}(N):=\inf_{v\in S_xM} \Ric_{N+n,V}(v,v) = \lambda_{\min{}} (\A_N),\,\,\text{for any}\,\,N\in (0,\infty],\] where $S_xM$ stands for the space of unit tangent vectors at $x$. For any $v,w\in T_xM$, the tensor $\Ric_{N+n,V}, N\in(0,\infty]$ can be written independently of the choice of an orthonormal basis $\{e_i\}_{i=1}^n$ of the tangent space $T_xM$ as
\begin{align*}
\Ric_{N+n,V}(v,w)&=\mathrm{Ric}(v,w)+\mathrm{Hess}V(v,w)-\frac{v(V)\cdot w(V)}{N}\\
&=\sum_{i=1}^n g(R(v,e_i)e_i,w)+g(\nabla_v\grad V,w)-\frac{1}{N}g(g(\grad V,v)\grad V,w),
\end{align*}
where $\nabla_v \cdot$ is the covariant derivative along $v$, and $R(\cdot,\cdot)\cdot$ is the Riemann curvature tensor.
Let us define linear transformations $\A_\infty, \B: T_xM\to T_xM$ as follows: for any $v\in T_xM$,
\begin{align*}
\A_\infty v &:=\sum_{i=1}^nR(v,e_i)e_i+\nabla_v\grad V,\\
\B v&:=\frac{1}{2}g(\grad V,v)\grad V.
\end{align*}
Therefore, the linear transformation $\A_N: T_xM\to T_xM$ satisfies
\begin{align} \label{eq:manifold}
\A_N = \A_\infty  - \frac{2}{N} \B.
\end{align}
Recall that $T_xM$ equipped with the inner product $g$ is an $n$-dimensional Euclidean vector space. Let $A_\infty, A_N$ be the matrix representation of $\A_\infty, \A_N$ with respect to an orthonormal basis $\{e_i\}_{i=1}^n$ of $T_xM$. Let $\mathbf{v}_0$ be the $n$-dimensional coordinate vector of $\frac{1}{\sqrt{2}}\grad V (x)$ with respect to $\{e_i\}_{i=1}^n$. Then we have a matrix version of \eqref{eq:manifold}:
\begin{equation}
A_N=A_\infty-\frac{2}{N}\mathbf{v}_0\mathbf{v}_0^\top.
\end{equation}
Notice that both $A_\infty$ and $A_N$ are symmetric $n\times n$ matrices. We call $A_\infty$ the \emph{the curvature matrix} at $x$ (with respect to the orthonormal basis $\{e_i\}_{i=1}^n$) of the weighted manifold $(M,g,e^{-V}d{\vol}_{g})$. The matrix $A_N$ is a rank one perturbation of the curvature matrix $A_\infty$. The Bakry-\'Emery curvature function satisfies
\begin{equation}
\K_{M,V,x}(N)=\lambda_{\min{}}(A_N)=\lambda_{\min{}}(A_\infty-\frac{2}{N}\v_0\v_0^\top).
\end{equation}


Therefore, we reduce the study of the Bakry-\'Emery curvature functions of weighted Riemannian manifolds to a matrix eigenvalue problem of the same type as in the graph case.

Then, it is direct to check the results (Theorem \ref{thm:continuous_and_threshold}, Proposition \ref{cor: N_threshold}, Theorem \ref{thm:lower_upper_bound}, and Propositions \ref{prop:curv_sharp_shape} and \ref{prop:v0_eigenvalue}) describing the shape of curvature functions of graphs also holds for the curvature functions of weighted manifolds.

In particular, we mention the quantity $\K_\infty^0(x)$ in the weighted manifold case
\begin{align*}
\K^0_\infty(x):=&\frac{\mathbf{v}_0^\top A_\infty\mathbf{v}_0}{\mathbf{v}_0^\top\mathbf{v}_0}=\frac{g(\A_\infty\grad V, \grad V)}{g(\grad V, \grad V)}\\
=&\Ric_x\left(\frac{\grad V}{\|\grad V\|}, \frac{\grad V}{\|\grad V\|}\right)+\frac{\grad V(x)}{\|\grad V(x)\|}\left(\|\grad V\|\right).
\end{align*}
Then we have
\begin{equation} \label{eq:upper_bound_manifold}
 \K_{M,V,x}(N) \le \frac{\mathbf{v}_0^\top A_N\mathbf{v}_0}{\mathbf{v}_0^\top\mathbf{v}_0}=\K^{0}_{\infty}(x) - \frac{1}{N}\|\grad V(x)\|^2.
\end{equation}
We say that $x$ is \emph{$N$-curvature sharp} if \eqref{eq:upper_bound_manifold} holds with equality. Then, for example, one can conclude similarly to Proposition \ref{prop:v0_eigenvalue} that $x$ is $\infty$-curvature sharp if and only if $\grad V(x)$ is an eigenvector corresponding to the minimal eigenvalue of $\A_\infty$.

Now we discuss the curvature functions of the Cartesian product of weighted Riemannian manifolds. Given two weighted manifolds $(M_i^{n_i}, g_i, e^{-V_i} d{\vol}_{g_i})$, $i \in \{1,2\}$, the Cartesian product $(M, g, e^{-V} d{\vol}_g) = (M_1\times M_2, g_1\oplus g_2, e^{-V_1\oplus V_2} d{\vol}_{g_1\oplus g_2})$ has a canonical identification of the tangent space $T_{(x_1,x_2)}M \simeq T_{x_1}M_1 \oplus T_{x_2}M_2$. We observe that $\A_\infty, \B$ of the product is naturally decomposed into the corresponding $\A_\infty,\B$ in each factor, that is,
\begin{align*}
\A_\infty^{M}(v_1 \oplus v_2)
&= \sum_{i=1}^{n_1} R(v_1,e_i)e_i \oplus \sum_{j=1}^{n_2} R(v_2,e_i)e_i+ \nabla_{v_1}\grad V_1\oplus \nabla_{v_2}\grad V_2  \\
&= \A_\infty^{M_1}(v_1) \oplus \A_\infty^{M_2}(v_2),
\end{align*}
and $\B(v_1 \oplus v_2) = g(\grad V_1 \oplus \grad V_2, v_1\oplus v_2 ) (\grad V_1 \oplus \grad V_2)$, for any $v_i\in T_{x_i}M_i$.
In the matrix form, we have
\begin{align*}
A_\infty ^M=A_\infty^{M_1}\oplus A_\infty^{M_2},\,\,\text{and}\,\,\mathbf{v}_0^M=\mathbf{v}_0^{M_1}\oplus\mathbf{v}_0^{M_2}.
\end{align*}

Theorem \ref{thm:eigen_product} is then applicable for manifolds and yields the following theorem.
\begin{theorem} \label{thm:cartesian_curv_fct_mfld}
The curvature function of the Cartesian product \[(M, g, e^{-V} d{\vol}_g) = (M_1\times M_2, g_1\oplus g_2, e^{-V_1\oplus V_2} d{\vol}_{g_1\oplus g_2})\] satisfies the following inequalities:
\[ \min \{ \lambda_{\min{}}(\A_{N_1}^{M_1}), \lambda_{\min{}}(\A_{N_2}^{M_2}) \} \le  \lambda_{\min{}}(\A_{N_1+N_2}^M) \le \max \{ \lambda_{\min{}}(\A_{N_1}^{M_1}), \lambda_{\min{}}(\A_{N_2}^{M_2}) \}, \]
and consequently, $\K_{M,V,(x_1,x_2)} = \K_{M_1, V_1, x_1} \ast  \K_{M_2, V_2, x_2}$.
\end{theorem}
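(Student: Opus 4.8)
The plan is to reduce the statement entirely to the abstract eigenvalue inequality of Theorem \ref{thm:eigen_product} combined with the characterisation of the star product recorded in the cited \cite[Proposition 7.3]{CLP20}. Since the Cartesian product of Riemannian manifolds carries no weight parameters, I would invoke Theorem \ref{thm:eigen_product} in the unweighted case $\alpha=\beta=1$.

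First I would set up the dictionary between the manifold data and the hypotheses of Theorem \ref{thm:eigen_product}. Taking $A_i:=A_\infty^{M_i}$ and $\mathbf{v}_i:=\mathbf{v}_0^{M_i}$ (the coordinate vector of $\frac{1}{\sqrt{2}}\grad V_i(x_i)$), the decompositions $A_\infty^M=A_\infty^{M_1}\oplus A_\infty^{M_2}$ and $\mathbf{v}_0^M=\mathbf{v}_0^{M_1}\oplus\mathbf{v}_0^{M_2}$ established just above the theorem are precisely the hypotheses $A=A_1\oplus A_2$ and $\mathbf{v}=\mathbf{v}_1\oplus\mathbf{v}_2$ with $\alpha=\beta=1$. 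Under this identification the perturbed matrices coincide with the Bakry-\'Emery matrices of each factor: $A_i(N_i)=A_\infty^{M_i}-\frac{2}{N_i}\mathbf{v}_0^{M_i}(\mathbf{v}_0^{M_i})^\top$ has smallest eigenvalue $\lambda_{\min{}}(\A_{N_i}^{M_i})=\K_{M_i,V_i,x_i}(N_i)$, while $A(N_1+N_2)$ has smallest eigenvalue $\lambda_{\min{}}(\A_{N_1+N_2}^M)=\K_{M,V,(x_1,x_2)}(N_1+N_2)$. Feeding these into \eqref{eq:product_ineq_general} yields exactly the displayed sandwich inequality, which is the first assertion of the theorem.

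To upgrade this inequality to the star-product identity, I would verify that the factor functions meet the hypotheses of the cited proposition: $f_i:=\K_{M_i,V_i,x_i}$ should be continuous, monotone non-decreasing, with $\lim_{N\to 0}f_i(N)=-\infty$, and $F:=\K_{M,V,(x_1,x_2)}$ should satisfy $F(\infty)=\lim_{N\to\infty}F(N)$. Continuity and monotonicity are the manifold analogue of Theorem \ref{thm:continuous_and_threshold}, which transfers verbatim from the graph case via the Rayleigh-quotient argument; the divergence $\lim_{N\to 0}f_i(N)=-\infty$ is the manifold version of the corresponding limit in Proposition \ref{prop:curv_fct_cont_limits}; and $F(\infty)=\lim_{N\to\infty}F(N)$ is continuity at infinity, coming from $\lambda_{\min{}}(A_N^M)\to\lambda_{\min{}}(A_\infty^M)$. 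With the sandwich inequality holding for all $N_1,N_2\in(0,\infty)$, the cited proposition then delivers $F=f_1\ast f_2$, namely $\K_{M,V,(x_1,x_2)}=\K_{M_1,V_1,x_1}\ast\K_{M_2,V_2,x_2}$.

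The one genuinely delicate point, rather than any computation, is the degenerate case $\grad V_i(x_i)=0$ for some factor: there $\mathbf{v}_0^{M_i}=0$ makes $A_i(N_i)$ independent of $N_i$ and the curvature function constant, so $\lim_{N\to 0}f_i(N)=-\infty$ fails and the star-product characterisation does not apply literally. I would handle this case separately (or simply restrict attention to the generic situation $\grad V_i(x_i)\neq 0$), noting that the underlying inequality from Theorem \ref{thm:eigen_product} remains valid regardless, since its proof only requires the positivity of the matrix $J$ and never uses $\mathbf{v}_i\neq 0$; only the translation into star-product language needs the non-degeneracy.
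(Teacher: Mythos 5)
Your proposal follows the same route as the paper: the paper's proof is essentially the one-line observation that Theorem \ref{thm:eigen_product} (with $\alpha=\beta=1$) applies once the decompositions $A_\infty^M=A_\infty^{M_1}\oplus A_\infty^{M_2}$ and $\mathbf{v}_0^M=\mathbf{v}_0^{M_1}\oplus\mathbf{v}_0^{M_2}$ are in place, followed by the star-product characterisation from \cite[Proposition 7.3]{CLP20}. Your additional care about the degenerate case $\grad V_i(x_i)=0$ is consistent with the paper's standing assumption (made earlier in the section) that $\grad V(x)\neq 0$, so the proposal is correct and matches the paper's argument.
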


We conclude this section with the following example of a weighted Riemannian manifold with $\infty$-curvature sharp points.
\begin{example} [weighted $2$-sphere] \label{ex:2-sphere}

Let $M = S^2(r)$ be the two-dimensional sphere of radius $r$ with coordinates $\x(\theta,\phi)=( r\cos \theta \cos \phi, r\sin \theta \cos \phi, r\sin \phi)$ for $\theta \in (0,2\pi)$ and $\phi\in (-\pi/2, \pi/2)$, and the corresponding metric $g=r^2(\cos^2(\phi) d\theta\otimes d\theta+d\phi\otimes d\phi)$. Let $V:M\to \IR$ be a smooth height function, i.e., $V(\x(\theta,\phi))=h(\phi)$ for some smooth function $h$. Consider the weighted manifold $(M, g, e^{-V} d{\vol}_g)$ and a point $p\in M$ with $\grad V(p)\neq 0$.

The tangent space $T_pM$ is spanned by $\grad V(p)$ and the tangent vector $\x_{\theta}(p) \in T_pM$, which satisfy $g(\x_{\theta}(p),\grad V(p))=0$. We first check that both the tangent vector $\grad V(p)$ and $\x_{\theta}(p)$ are in fact eigenvectors of $\A_N$ (based at $p$).

Consider the geodesic $\alpha:=\x(\theta_0,\cdot):(-\pi/2,\pi/2) \to S^2(r)$ which passes through $p=\alpha(t)=\x(\theta_0,t)$. The tangent vector $\grad V(p)$ is parallel to $\alpha'(t)$, so we may write $\grad V(p)=k(t) \alpha'(t)$ for some smooth function $k:(-\pi/2,\pi/2) \to \IR$. It follows that at $p\in M$
\begin{align*}
\nabla_{\grad V} \grad V &= \nabla_{k\alpha'}k\alpha' = k\nabla_{\alpha'}k\alpha' = k^2\underbrace{\nabla_{\alpha'}\alpha'}_{=0}+ kk'\alpha'=k'\grad V,
\end{align*}
and hence,
\begin{align*}
\A_N \grad V
&= \sum_{i=1}^2 R(\grad V,e_i)e_i+\nabla_{\grad V }\grad V - \frac{g(\grad V , \grad V )}{N}\grad V \\
&= \left( \frac{1}{r^2} + k' - \frac{k^2 r^2}{N} \right) \grad V.
\end{align*}
On the other hand, we have $g_p(\nabla_{\x_{\theta}} \grad V, \grad V)=0$, and
\begin{align*}
g_p\left(\nabla_{\x_{\theta}}\grad V, \frac{\x_{\theta}}{\|\x_{\theta}\|} \right)=-\|x_{\theta}\| g_p \left( \grad V, \nabla_{\frac{\x_{\theta}}{\|\x_{\theta}\|}} \frac{\x_{\theta}}{\|\x_{\theta}\|} \right) = - \|x_{\theta}\|g_p\left(\grad V, k_g \frac{\x_{\phi}}{\|\x_{\phi}\|}\right),
\end{align*}
where $k_g=\frac{1}{r} \tan(t)$ is the geodesic curvature of the parallel circles with the unit tangent vector $\frac{\x_{\theta}}{\|\x_{\theta}\|}(p)$, and $\frac{\x_{\phi}}{\|\x_{\phi}\|}(p)=\frac{\alpha'(t)}{\|\alpha'(t)\|}=\frac{\alpha'(t)}{r}$. It follows that
\begin{align*}
\nabla_{\x_{\theta}(p)} \grad V = - g \left(\grad V(p), \frac{\alpha'(t)}{r} \right) \frac{1}{r}\tan(t)\x_{\theta}(p) = - k(t) \tan(t) \x_{\theta}(p).
\end{align*}
Therefore, we have
\begin{align*}
\A_N \x_{\theta}(p) &= \sum_{i=1}^2 R(\x_{\theta}, e_i)e_i+\nabla_{\x_{\theta} }\grad V - \frac{g(\grad V(p), \x_{\theta} )}{N}\grad V(p) \\
&= \left( \frac{1}{r^2} - k(t) \tan(t) \right) \x_{\theta}(p).
\end{align*}
It means that both $\grad V(p)$ and $\x_{\theta}(p)$ are eigenvectors of $\A_N$. Then the Bakry-\'Emery curvature and the generalised scalar curvature at $p$ are given by
\begin{align*}
\K_{M,V,p}(N) &= \min \left\{ \frac{1}{r^2} + k'(t) - \frac{k(t)^2 r^2}{N} , \frac{1}{r^2} - k(t) \tan(t) \right\}, \\
S_{M,V,p}(N) &= \frac{2}{r^2} - k(t)\tan(t)+ k'(t) - \frac{k(t)^2 r^2}{N}.
\end{align*}

Recall that the point $p=\alpha(t)$ is $\infty$-curvature sharp if and only if $\grad V(p)$ corresponds to the minimal eigenvalue of $\A_\infty$, which occurs precisely when $k'(t) \le - \tan(t) k(t)$.
In the special case when $k:(-\pi/2,\pi/2) \to \IR$ is  even, either the point $p=\alpha(t)$ or its mirror $p'=\alpha(-t)$ (or both) is $\infty$-curvature sharp. In particular, if $k(\cdot)=c\cos(\cdot)$ on the whole interval $(-\pi/2,\pi/2)$ for some $c\neq 0$ (which means $V(x,y,z)=az+b$ with $a=cr$ and $b\in \mathbb{R}$), then we have $k' = - \tan(\cdot) k$ and hence $p$ is $\infty$-curvature sharp. In fact, every point of $M$ except for the south and north poles (i.e., when $\phi=-\pi/2, \pi/2$) is $\infty$-curvature sharp. At the south and north poles, the curvature functions are constant. Moreover, this choice of $k$ provides a non-constant potential function $V$ for the round sphere as a gradient Ricci soliton.
\end{example}

A complete Riemannian manifold $(M,g)$ is called a gradient Ricci soliton with a potential function $V$ if $\Ric_{V,\infty} = \lambda g$ for some constant $\lambda$ (see, e.g., \cite[Definition 1.2.3]{Topping}).
\begin{theorem} \label{thm:soliton}
Every gradient Ricci soliton $(M,g)$ with a potential function $V$ leads to a weighted Riemannian manifold $(M,g,e^{-V}d{\vol}_g)$ which is $\infty$-curvature sharp at every point $x\in M$ with $\grad V (x)\not=0$.
\end{theorem}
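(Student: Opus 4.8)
The plan is to show that the soliton condition forces the curvature endomorphism $\A_\infty$ to be a scalar multiple of the identity at every point, after which $\infty$-curvature sharpness is immediate from the eigenvector criterion established earlier in this section.

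First I would identify $\A_\infty$ with the symmetric endomorphism associated to the bilinear form $\Ric_{\infty,V}=\Ric+\Hess V$. From the definition $\A_\infty v=\sum_{i=1}^n R(v,e_i)e_i+\nabla_v\grad V$ together with $\Hess V(v,w)=g(\nabla_v\grad V,w)$, one checks directly that $g(\A_\infty v,w)=\Ric(v,w)+\Hess V(v,w)$ for all $v,w\in T_xM$; this is precisely the $N\to\infty$ limit of the expression for $\Ric_{N+n,V}$ recorded above, consistent with \eqref{eq:RicciTensor}.

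Next I would invoke the gradient Ricci soliton equation. By definition $\Ric_{V,\infty}=\lambda g$ for a constant $\lambda$, which reads $\Ric+\Hess V=\lambda g$ as symmetric $(0,2)$-tensors. Combining this with the identification above yields $g(\A_\infty v,w)=\lambda\,g(v,w)$ for all $v,w\in T_xM$, and since $g$ is nondegenerate this forces $\A_\infty=\lambda\,\mathrm{Id}$ on the whole tangent space $T_xM$. Because $\A_\infty$ is then a homothety, its spectrum is the single value $\lambda$ and its minimal eigenspace $E_{\min{}}(\A_\infty)$ equals all of $T_xM$. In particular, at any $x$ with $\grad V(x)\neq 0$ the vector $\v_0=\tfrac{1}{\sqrt 2}\grad V(x)$ is a nonzero eigenvector lying in $E_{\min{}}(\A_\infty)$, so by the manifold analogue of Proposition \ref{prop:v0_eigenvalue}(ii) — that $x$ is $\infty$-curvature sharp if and only if $\grad V(x)$ corresponds to the minimal eigenvalue of $\A_\infty$ — the point $x$ is $\infty$-curvature sharp.

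There is no genuine analytic obstacle here; the whole content is the observation that a soliton makes $\A_\infty$ a multiple of the identity. The only points requiring care are conventional: verifying the identification $g(\A_\infty\,\cdot,\cdot)=\Ric_{\infty,V}(\cdot,\cdot)$ with the sign and normalisation conventions of \eqref{eq:RicciTensor} and of the definition of $\A_\infty$, and noting that the hypothesis $\grad V(x)\neq 0$ is exactly what guarantees that $\v_0\propto\grad V$ is nonzero so that the curvature-sharpness inequality \eqref{eq:upper_bound_manifold} is meaningful.
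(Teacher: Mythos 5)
Your proposal is correct and follows essentially the same route as the paper: the soliton equation $\Ric+\Hess V=\lambda g$ makes $\A_\infty(x)=\lambda\,\mathrm{Id}$, so $\grad V(x)\neq 0$ lies in the minimal eigenspace and Proposition \ref{prop:v0_eigenvalue}(ii) (in its manifold form) gives $\infty$-curvature sharpness. You simply spell out the identification $g(\A_\infty\cdot,\cdot)=\Ric_{\infty,V}(\cdot,\cdot)$ that the paper leaves implicit.
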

\begin{proof}
Being a gradient Ricci soliton means, for every point $x\in M$, $\A_\infty(x)=\lambda {\rm Id}$. In particular, if $\grad V (x)$ is nonzero, then it is an eigenvector corresponding to the smallest eigenvalue of $\A_\infty(x)$, and therefore $x$ is $\infty$-curvature sharp by Theorem \ref{prop:v0_eigenvalue}(ii).
\end{proof}

\section{Geometric structure of $B_2(x)$ and curvature properties} \label{sect:geom_property}

In this section, we present the proofs of the three results (Proposition \ref{prop:scalar_curv}, and Theorems \ref{thm:curv_sharp_criterion} and \ref{thm:graph_modify_conjecture}) about the curvature at $x$ which are related to the geometric structure of the $B_2(x)$.

\begin{proof}[Proof of Propositions \ref{prop:scalar_curv}]
The computations for the matrix $A_\infty$ and \[S_{G,x}(N)=\trace(A_\infty - \frac{2}{N}\v_0\v_0^\top)=S_{G,x}(\infty)-\frac{2}{N}\frac{d_x}{\mu_x}\] are given in Appendix \eqref{eq:curMatrixLaplacian} and \eqref{eq:scalar_compute}. In the particular case of non-weighted graphs, the terms in \eqref{eq:curMatrixLaplacian} and \eqref{eq:scalar_compute} are simplified by $\mu_x=1$ and $p_{uv}\in \{0,1\}$, which directly gives the desired result.
\end{proof}

\begin{proof}[Proof of Theorem \ref{thm:curv_sharp_criterion}]
In view of Proposition \ref{prop:v0_eigenvalue}, we need to show that $\v_0$ is an eigenvector of $A_\infty$ under the $S_1$-in and $S_1$-out regularity assumption: $p^{-}(y):=p_{yx}$ and $p^{+}(y):=\sum_{z\in S_2(x)} p_{yz}$ are independent of $y\in S_1(x)$.

The vector $\v_0=\begin{pmatrix}\sqrt{p_{xy_1}} & \sqrt{p_{xy_2}}  & \cdots & \sqrt{p_{xy_m}} \end{pmatrix}^\top$ is an eigenvector of $A_\infty$ if and only if $\lambda \v_0 = A_\infty \v_0 = 2\diag(\v_0)^{-1} Q \diag(\v_0)^{-1}$ for some $\lambda\in \IR$, or equivalently,
\[2Q \mathbbm{1}_m=\lambda \begin{pmatrix} p_{xy_1} & p_{xy_2}  & \cdots & p_{xy_m} \end{pmatrix}^\top,\]
that is, $\frac{1}{p_{xy_i}}\sum_{j=1}^{m} Q_{y_i y_j} = \frac{1}{2}\lambda$ is independent of $i\in [m]:=\{1,2,\ldots,m\}$.

A direct calculation using the formula \eqref{eq:Gamma2S1S1Laplacian} yields, for any $i\in [m]$,
\begin{align*}
\frac{1}{p_{xy_i}}\sum_{j=1}^{m} Q_{y_i y_j}
&= \frac{1}{4}\frac{d_x}{\mu_x} +\frac{3}{4} p_{y_ix}- \frac{1}{4}\sum_{z\in S_2(x)} p_{y_iz} +
\sum_{j=1}^{m} (\frac{1}{4} p_{y_i y_j} -\frac{1}{4}\frac{p_{xy_j}p_{y_jy_i}}{p_{xy_i}})  \\
&= \frac{1}{4}\frac{d_x}{\mu_x} +\frac{3}{4} p_{y_ix} - \frac{1}{4}\sum_{z\in S_2(x)} p_{y_iz} +\frac{1}{4} \sum_{j=1}^{m} p_{y_i y_j} (\underbrace{1-\frac{p_{y_jx}}{p_{y_ix}}}_{=0}),
\end{align*}
which is independent of $i$, given that $x$ is $S_1$-in and $S_1$-out regular.
\end{proof}

\begin{proof}[Proof of Theorem \ref{thm:graph_modify_conjecture}]
We denote by $\widetilde{Q}$, $\widetilde{A}_\infty$ and $\widetilde{A}_N$ the corresponding matrices $Q$, $A_\infty$ and $A_N$ centered at the vertex $x$ of the modified graph $\widetilde{G}=(V,\tilde{w}, \mu)$. We aim to prove that $\K_{\widetilde{G},x}(N) \ge \K_{G,x}(N)$, that is, $\lambda_{\min{}}(\widetilde{A}_N) \ge \lambda_{\min{}}(A_N)$. It suffices to show that $\widetilde{A}_N-A_N$ is positive semidefinite, since it would then imply that $\lambda_{\min{}}(\widetilde{A}_N)\ge  \lambda_{\min{}}(\widetilde{A}_N-A_N)+ \lambda_{\min{}}(A_N) \ge \lambda_{\min{}}(A_N)$.

Note that the vector $\mathbf{v}_0 = (\sqrt{p_{xy_1}} \ \sqrt{p_{xy_2}} \ ... \ \sqrt{p_{xy_m}})^\top$ is unchanged under this graph modification, so we have
$\widetilde{A}_N-A_N = 2\diag(\mathbf{v}_0)^{-1}(\widetilde{Q}-Q)\diag(\mathbf{v}_0)^{-1}$. To prove that $\widetilde{A}_N-A_N \succeq 0$ is equivalent to showing that $\widetilde{Q}-Q \succeq 0$.

\textbf{Operation (O1):}
The modification $\tilde{w}_{yy'}=w_{yy'}+C_1$ for a constant $C_1>0$ means $\tilde{p}_{yy'} - p_{yy'} = \frac{C_1}{\mu_y}$ and $\tilde{p}_{y'y} - p_{y'y} = \frac{C_1}{\mu_{y'}}$. We then derive from the formulae \eqref{eq:Q_ondiag} and \eqref{eq:Q_offdiag} that the matrix $\widetilde{Q}-Q$ have four nontrivial entries:
\begin{align*}
(\widetilde{Q}-Q)_{yy} &= \frac{1}{4}\left(3p_{xy}(\tilde{p}_{yy'} - p_{yy'})+ p_{xy'}(\tilde{p}_{y'y} - p_{y'y})\right) \\
&= \frac{C_1}{4}\left(3\frac{p_{xy}}{\mu_{y}}+ \frac{p_{xy'}}{\mu_{y'}}\right) = \frac{C_1}{4\mu_x}(3p_{yx}+ p_{y'x}),
\end{align*}
and similarly, $(\widetilde{Q}-Q)_{y'y'}=\frac{C_1}{4\mu_x}(p_{yx}+ 3p_{y'x})$ and $(\widetilde{Q}-Q)_{yy'}=(\widetilde{Q}-Q)_{y'y}=-\frac{C_1}{2\mu_x}(p_{yx}+ p_{y'x})$.

Consequently, the matrix $\widetilde{Q}-Q$ has two nontrivial eigenvalues, corresponding to those of the following $2\times 2$ matrix
\begin{align*}
	\left(\begin{array}{cc}
		  (\widetilde{Q}-Q)_{yy} & (\widetilde{Q}-Q)_{yy'} \\
		  (\widetilde{Q}-Q)_{y'y} & (\widetilde{Q}-Q)_{y'y'}
	\end{array}
	\right) =
	\frac{C}{4\mu_x}\left(\begin{array}{cc}
		  3p_{yx} + p_{y'x} & -2p_{yx} -2p_{y'x} \\
		  -2p_{yx} -2p_{y'x}& p_{yx}+ 3p_{y'x}
	\end{array}
	\right).
\end{align*}
This matrix has eigenvalues $p_{yx} + p_{y'x} \pm \sqrt{2p_{yx}^2+2p_{y'x}^2}$, and it becomes positive semidefinite when we assume $p_{yx} = p_{y'x}$.

\textbf{Operation (O2):}
Note that the edge-weight modification $\tilde{w}_{yy'}=w_{yy'}+C_2 w_{yz_0}w_{z_0y'}$ for all different $y,y'\in S_1(x)$ means
$\tilde{p}_{yy'} - p_{yy'}=C_2p_{yz_0}p_{z_0y'}\mu_{z_0}$.

For $y_i,y_j\in S_1(x)$ such that $y_i\neq y_j$, the formula \eqref{eq:Q_offdiag} gives
\begin{align} \label{eq:Q_diff_offdiag}
(\widetilde{Q}-Q)_{y_i y_j}
&= -\frac{1}{2}p_{xy_i}(\tilde{p}_{y_iy_j} - p_{y_iy_j} ) -\frac{1}{2}p_{xy_j}(\tilde{p}_{y_jy_i} - p_{y_jy_i} ) + \frac{p_{xy_i} p_{y_iz_0}p_{xy_j} p_{y_jz_0}}{p_{xz_0}^{(2)}} \\
&= -\frac{1}{2}p_{xy_i}\cdot C_2p_{y_iz_0} p_{z_0y_j}\mu_{z_0} -\frac{1}{2}p_{xy_j}\cdot C_2p_{y_jz_0} p_{z_0y_i}\mu_{z_0} + \frac{p_{xy_i} p_{y_iz_0}p_{xy_j} p_{y_jz_0}}{p_{xz_0}^{(2)}} \nonumber\\
&= -C_2p_{xy_i}p_{y_iz_0}p_{z_0y_j}\mu_{z_0} + \frac{p_{xy_i} p_{y_iz_0}p_{xy_j} p_{y_jz_0}}{p_{xz_0}^{(2)}} \nonumber\\
&= - p_{xy_i}p_{y_iz_0}p_{z_0y_j} \left(C_2\mu_{z_0}-\frac{p_{xy_j} p_{y_jz_0}}{p_{z_0y_j}p_{xz_0}^{(2)}}\right) \nonumber,
\end{align}
where the third equation is due to $p_{xy_i}p_{y_iz_0}p_{z_0y_j} = p_{xy_j}p_{y_jz_0}p_{z_0y_i}$ which can be checked by
\begin{align*}
\frac{p_{xy_i}p_{y_iz_0}p_{z_0y_j}}{p_{xy_j}p_{y_jz_0}p_{z_0y_i}}
= \frac{w_{xy_i}w_{y_iz_0} w_{z_0y_j}}{\mu_x\mu_{y_i}\mu_{z_0}} \cdot \frac{\mu_x\mu_{y_j}\mu_{z_0}}{w_{xy_j}w_{y_jz_0} w_{z_0y_i}}=\frac{w_{xy_i} \mu_{y_j}}{\mu_{y_i}w_{xy_j}}=\frac{p_{y_ix}}{p_{y_jx}}=\frac{p^{-}(y)}{p^{-}(y)}=1.
\end{align*}

For $y_i\in S_1(x)$, the formula \eqref{eq:Q_ondiag} gives
\begin{align} \label{eq:Q_diff_ondiag}
(\widetilde{Q}-Q)_{y_i y_i}
&= -\frac{3}{4}p_{xy_i}p_{y_iz_0}+\frac{1}{4}\sum_{y_j\not=y_i}\left(3p_{xy_i}(\tilde{p}_{y_iy_j} - p_{y_iy_j} ) + p_{xy_j}(\tilde{p}_{y_jy_i}-p_{y_jy_i})\right)+\frac{p_{xy_i}^2 p_{y_iz_0}^2}{p_{xz_0}^{(2)}} \\
&= -\frac{3}{4}p_{xy_i}p_{y_iz_0}+\sum_{y_j\not=y_i}\left( \frac{3C_2\mu_{z_0}}{4}p_{xy_i} p_{y_iz_0} p_{z_0y_j}+\frac{C_2\mu_{z_0}}{4}p_{xy_j}p_{y_jz_0} p_{z_0y_i}\right)+\frac{p_{xy_i}^2 p_{y_iz_0}^2}{p_{xz_0}^{(2)}} \nonumber\\
&= -\frac{3}{4}p_{xy_i}p_{y_iz_0} + C_2\mu_{z_0}\sum_{y_j\not=y_i}p_{xy_i} p_{y_iz_0} p_{z_0y_j} +\frac{p_{xy_i}^2 p_{y_iz_0}^2}{p_{xz_0}^{(2)}} \nonumber\\
&=p_{xy_i}p_{y_iz_0} \left(-\frac{3}{4}+C_2\mu_{z_0}\sum_{y_j\not=y_i}p_{z_0y_j} + \frac{p_{xy_i} p_{y_iz_0}}{p_{xz_0}^{(2)}}\right). \nonumber
\end{align}
Combining \eqref{eq:Q_diff_ondiag} and \eqref{eq:Q_diff_offdiag}, we derive the sum of entries in $i$-th row as
\begin{align*}
(\widetilde{Q}-Q)_{y_i y_i}+\sum_{j\not=i}(\widetilde{Q}-Q)_{y_i y_j}
&= p_{xy_i}p_{y_iz_0} \left( -\frac{3}{4} + \frac{p_{xy_i} p_{y_iz_0}}{p_{xz_0}^{(2)}} + \sum_{j\not=i}\frac{p_{xy_j} p_{y_jz_0}}{p_{xz_0}^{(2)}}\right)\\
&= p_{xy_i}p_{y_iz_0} \left( -\frac{3}{4} +\frac{1}{p_{xz_0}^{(2)}}\sum_{y\in S_1(x)} p_{xy} p_{yz_0} \right)\\
&= \frac{1}{4}p_{xy_i}p_{y_iz_0} >0.
\end{align*}
(Note that the terms involving $C_2$ are cancelled out in the above expression.)

Under the assumption that $\displaystyle C_2\mu_{z_0} \ge \frac{p_{xy_j} p_{y_jz_0}}{p_{z_0y_j}p_{xz_0}^{(2)}}$ for all $j\not=i$, we can guarantee in \eqref{eq:Q_diff_offdiag} that $(\widetilde{Q}-Q)_{y_i y_j}\le 0$. It then follows that
$$(\widetilde{Q}-Q)_{y_i y_i}> - \sum_{j\not=i}(\widetilde{Q}-Q)_{y_i y_j} = \sum_{j\not=i}\left|(\widetilde{Q}-Q)_{y_i y_j}\right|,$$ which shows $\widetilde{Q}-Q$ is diagonally dominant and hence $\widetilde{Q}-Q\succeq 0$.

Finally, we remark that the assumption $\displaystyle C_2\mu_{z_0} \ge \frac{p_{xy_j} p_{y_jz_0}}{p_{z_0y_j}p_{xz_0}^{(2)}}$ can be re-written as the assumption given in Theorem \ref{thm:graph_modify_conjecture}, namely $\displaystyle C_2 \ge \frac{p^{-}(y)}{\mu_x p_{xz_0}^{(2)}}$ due to the following identity:
\begin{align*}
\frac{p_{xy_j} p_{y_jz_0}}{p_{z_0y_j}p_{xz_0}^{(2)}}\cdot\frac{1}{\mu_{z_0}}
= \frac{w_{xy_j} w_{y_jz_0}}{\mu_x\mu_{y_j}w_{z_0y_j}p_{xz_0}^{(2)}}=\frac{p_{y_jx}}{\mu_x p_{xz_0}^{(2)}}=\frac{p^{-}(y)}{\mu_x p_{xz_0}^{(2)}}.
\end{align*}
\end{proof}
\appendix
\section{Explicit Structure of relevant matrices} \label{sect:appendix}

In this section we collect the explicit expressions of matrices $\left(\Delta(x)\Delta(x)^\top\right)_{\hat{1}}$, $\Gamma(x)_{\hat{1}}$, $\Gamma_2(x)_{\hat{1}}$, $Q(x)$ and $A_\infty(x)$, all of which are important ingredients to our curvature calculation in Theorem \ref{thm:eigenvalue_main}. These expressions will be given in \eqref{eq:DeltaDeltaT}, \eqref{eq:Gamma=diag}, \eqref{eq:Gamma_2_yy}-\eqref{eq:Gamma_2_zz_off}, \eqref{eq:Q_ondiag}-\eqref{eq:Q_offdiag} and \eqref{eq:A_infty_entries}, respectively.

We fix the central vertex $x\in V$ and let $m=|S_1(x)|$ and $n=|S_2(x)|$ be the size of $1$-sphere and $2$-sphere around $x$, respectively. The vertices in $S_1(x)$ and $S_2(x)$ are indexed by
\begin{align*}
S_1(x) = \{y_1, y_2,....,y_m\}; \qquad
S_2(x) = \{z_1, z_2,....,z_n\}.
\end{align*}

The linear operator $\Delta(\cdot)(x)$ and the bilinear forms $\Gamma(\cdot, \cdot)(x), \Gamma_2(\cdot, \cdot)(x)$ can be represented by a vector $\Delta(x)$ and matrices $\Gamma(x), \Gamma_2(x)$ as follows:
\begin{align*}
	\Delta f(x) &= \Delta(x)^\top \vec{f},\\
	\Gamma(f,g)(x) &=\vec{f}^\top \Gamma(x) \vec{g}, \\
	\Gamma_2(f,g)(x) &=\vec{f}^\top \Gamma_2(x) \vec{g}.
\end{align*}
In the first two equations, $\vec{f}$ and $\vec{g}$ are vector representations indexed by vertices in $B_1(x)$ as
\begin{equation*}
	\vec{f}=\left(
	\begin{array}{cccc}
		f(x) & f(y_1) & \cdots & f(y_m) \\
	\end{array}
	\right)^\top,
\end{equation*}
and similarly for $\vec{g}$. In the last equation, $\vec{f}$ and $\vec{g}$ are vector representations indexed by vertices in $B_2(x)$ as
\begin{equation*}
	\vec{f}=\left(
	\begin{array}{ccccccc}
		f(x) & f(y_1) & \cdots & f(y_m) & f(z_1) & \cdots & f(z_n) \\
	\end{array}
	\right)^\top,
\end{equation*}
and similarly for $\vec{g}$.

More explicitly, the defining equation $\Delta f(x) = \sum_{y\in S_1(x)} p_{xy} (f(y)-f(x))$ is translated to
\begin{equation*}
	\Delta(x)=\left(
	\begin{array}{ccccc}
		-\frac{d_x}{\mu_x} & p_{xy_1} & p_{xy_2} & \cdots & p_{xy_m} \\
	\end{array}
	\right)^\top,
\end{equation*}
and $\Delta(x)_{S_1}=\left(
	\begin{array}{cccc}
		p_{xy_1} & p_{xy_2} & \cdots & p_{xy_m} \\
	\end{array}
	\right)^\top$
when restricted to the vertices in $S_1(x)$.

Hence we derive that
\begin{equation}\label{eq:DeltaDeltaT}
\left(\Delta(x)\Delta(x)^\top\right)_{\hat{1}}=\Delta(x)_{S_1}\Delta(x)_{S_1}^\top=\left(
                                                                          \begin{array}{cccc}
                                                                            p_{xy_1}^2 & p_{xy_1} p_{xy_2} & \cdots &  p_{xy_1} p_{xy_m} \\
                                                                            p_{xy_2}p_{xy_1} &  p_{xy_2}^2 & \cdots &  p_{xy_2} p_{xy_m} \\
                                                                            \vdots & \vdots & \ddots & \vdots \\
                                                                             p_{xy_m} p_{xy_1} &  p_{xy_m} p_{xy_2} & \cdots &  p_{xy_m}^2 \\
                                                                          \end{array}
                                                                        \right).
\end{equation}

The defining equation $2\Gamma(f,g) = \Delta(f\cdot g) - f\cdot \Delta g - \Delta f \cdot g$ means
\begin{align*}
2\Gamma (f,g)(x)
&= \sum_{y\in S_1(x)}  p_{xy}[(f(y)g(y)-f(x)g(x))- f(x)(g(y)-g(x)) - (f(y)-f(x))g(x)] \\
&= \sum_{y\in S_1(x)}  p_{xy}(f(y)-f(x))(g(y)-g(x)) \\
&= \sum_{y\in S_1(x)} p_{xy}[f(x)g(x)-f(x)g(y)-f(y)g(x)+f(y)g(y)],
\end{align*}
which can be translated to
\begin{equation*}
	\Gamma(x)=\frac{1}{2}\left(
	\begin{array}{ccccc}
		\frac{d_x}{\mu_x} & -p_{xy_1} & -p_{xy_2} & \cdots & -p_{xy_m} \\
		-p_{xy_1} & p_{xy_1} & & & \\
		-p_{xy_2} & & p_{xy_2} & & \\
		\vdots & & & \ddots & \\
		-p_{xy_m} & & & & p_{xy_m}
	\end{array}
	\right).
\end{equation*} In particular, after removing the first row and column corresponding to the vertex $x$, we simply have
\begin{equation} \label{eq:Gamma=diag}
	\Gamma(x)_{\hat{1}}=\Gamma(x)_{S_1,S_1}=\frac{1}{2}\mathrm{diag}\left(
	\begin{array}{cccc}
		p_{xy_1} & p_{xy_2} & \cdots & p_{xy_m} \\
	\end{array}
	\right)= \frac{1}{2}\mathrm{diag}\left(\Delta(x)_{S_1}\right).
\end{equation}

Now we discuss the structure of the matrix $\Gamma_2(x)$. After removing the first row and column corresponding to $x$, the matrix $\Gamma_2(x)_{\hat{1}}$ has the following block structure in $S_1(x)\sqcup S_2(x)$:
\begin{equation} \label{eq:Gamma_2_1hat}
	\Gamma_2(x)_{\hat{1}}=\Gamma_2(x)_{S_1\cup S_2, S_1\cup S_2}=\left(
	\begin{array}{cc}
		\Gamma_2(x)_{S_1,S_1} & \Gamma_2(x)_{S_1,S_2} \\
		\Gamma_2(x)_{S_2,S_1} & \Gamma_2(x)_{S_2,S_2} \\
	\end{array}
	\right).
\end{equation}
The defining equation $2\Gamma_2(f,g) = \Delta(\Gamma(f,g)) - \Gamma(f,\Delta g) - \Gamma(g,\Delta f)$ yields
\begin{align*}
4\Gamma_2(f,g)(x)=&\sum_{y\in S_1(x)}p_{xy}\left[2\Gamma(f,g)(y)-(f(y)-f(x))\Delta g(y)-(g(y)-g(x))\Delta f(y)\right]\\
&-\frac{2d_x}{\mu_x}\Gamma(f,g)(x)+2\Delta f(x)\Delta g(x).
\end{align*}
Let us denote by $I(\cdot,\cdot)(x)$ the bilinear form defined via
\[I(f,g)(x):=\sum_{y\in S_1(x)}p_{xy}\left[2\Gamma(f,g)(y)-(f(y)-f(x))\Delta g(y)-(g(y)-g(x))\Delta f(y)\right],\]
and the matrix representing it by $I(x)$. Then we have
\begin{equation*}
4\Gamma_2(x)=I(x)-\frac{2d_x}{\mu_x}\Gamma(x)+2\Delta(x)\Delta(x)^\top,
\end{equation*}
and hence
\begin{equation}\label{eq:Gamma2GammaDelta}
4\Gamma_2(x)_{\hat{1}}=I(x)_{\hat{1}}-\frac{2d_x}{\mu_x}\Gamma(x)_{\hat{1}}+2\left(\Delta(x)\Delta(x)^\top\right)_{\hat{1}}.
\end{equation}
In order to derive $I(x)_{\hat{1}}$, we only need to compute the expression of $I(f,g)(x)$ for functions $f$ and $g$ satisfying $f(x)=g(x)=0$:
\begin{align*}
&I(f,g)(x)\\
=&\sum_{y\in S_1(x)}p_{xy}\left[2\Gamma(f,g)(y)-f(y)\Delta g(y)-g(y)\Delta f(y)\right]\\
=&\sum_{y\in S_1(x)}\sum_{z\in S_1(y)}p_{xy}p_{yz}\left[(f(z)-f(y))(g(z)-g(y))-f(y)(g(z)-g(y))-g(y)(f(z)-f(y))\right]\\
=&\sum_{y\in S_1(x)}\sum_{z\in S_1(y)}p_{xy}p_{yz}\left[f(z)g(z)-2f(z)g(y)-2f(y)g(z)+3f(y)g(y)\right]\\
=&\sum_{y\in S_1(x)}\sum_{z\in S_2(x)}p_{xy}p_{yz}\left[f(z)g(z)-2f(z)g(y)-2f(y)g(z)+3f(y)g(y)\right]\\
& +\sum_{y\in S_1(x)}\sum_{y'\in S_1(x)}p_{xy}p_{yy'}\left[f(y')g(y')-2f(y')g(y)-2f(y)g(y')+3f(y)g(y)\right]\\
& +\sum_{y\in S_1(x)}p_{xy}p_{yx}\cdot 3f(y)g(y)\\
=&\sum_{y\in S_1(x)}\left[3p_{xy}p_{yx}+3p_{xy}\sum_{z\in S_2(x)}p_{yz}+3p_{xy}\sum_{y'\in S_1(x)}p_{yy'}+\sum_{y'\in S_1(x)}p_{xy'}p_{y'y}\right]f(y)g(y)\\
&-\sum_{y\in S_1(x)}\sum_{y'\in S_1(x)}2p_{xy}p_{yy'}(f(y')g(y)+f(y)g(y'))\\
&-\sum_{y\in S_1(x)}\sum_{z\in S_2(x)}2p_{xy}p_{yz}(f(y)g(z)+g(y)f(z))+\sum_{z\in S_2(x)}p_{xz}^{(2)}f(z)g(z).
\end{align*}
In the above, we use the notation
\[p_{xz}^{(2)}:=\sum_{y\in S_1(x)}p_{xy}p_{yz}.\]
Therefore we have the expression of the matrix $I(x)_{\hat{1}}$ as below.
For any $y\in S_1(x)$,
\begin{equation*}
I(x)_{yy}=3p_{xy}p_{yx}+3p_{xy}\sum_{z\in S_2(x)}p_{yz}+3p_{xy}\sum_{y'\in S_1(x)}p_{yy'}+\sum_{y'\in S_1(x)}p_{xy'}p_{y'y}.
\end{equation*}
For any $y_i,y_j\in S_1(x)$ such that $y_i\neq y_j$,
\begin{equation*}
I(x)_{y_iy_j}=-2p_{xy_i}p_{y_iy_j}-2p_{xy_j}p_{y_jy_i}.
\end{equation*}
For any $y\in S_1(x)$ and $z\in S_2(x)$,
\begin{equation*}
I(x)_{yz}=I(x)_{zy}=-2p_{xy}p_{yz}, \,\,I(x)_{zz}=p_{xz}^{(2)}.
\end{equation*}
For any $z_i,z_j\in S_2(x)$ such that $z_i\neq z_j$,
\begin{equation*}
I(x)_{z_iz_j}=0.
\end{equation*}
Combining the above expressions for $I(x)_{\hat{1}}$ with (\ref{eq:DeltaDeltaT}),(\ref{eq:Gamma=diag}) and (\ref{eq:Gamma2GammaDelta}) yields the expressions for $\Gamma_2(x)_{\hat{1}}$ as below.

For any $y\in S_1(x)$,
\begin{align} \label{eq:Gamma_2_yy}
(4\Gamma_2(x))_{yy} = &2p_{xy}^{2}+3p_{xy}p_{yx}-\frac{d_x}{\mu_x}p_{xy} +3p_{xy}\sum_{z\in S_{2}(x)}p_{yz} \\
& +\sum_{y'\in S_1(x)}\left( 3p_{xy}p_{yy'} + p_{xy'}p_{y'y} \right). \nonumber
\end{align}
For any $y_i,y_j\in S_1(x)$ such that $y_i\neq y_j$,
\begin{align} \label{eq:Gamma_2_yy_off}
(4\Gamma_2(x))_{y_iy_j}=2 p_{xy_i}p_{xy_j} -2p_{xy_i}p_{y_iy_j} -2p_{xy_j}p_{y_jy_i},
\end{align}
and, for any $z\in S_2(x)$,
\begin{equation} \label{eq:Gamma_2_yz}
(4\Gamma_2(x))_{yz} = (4\Gamma_2(x))_{zy}=-2p_{xy}p_{yz};
\end{equation}
\begin{equation} \label{eq:Gamma_2_zz}
(4\Gamma_2(x))_{zz} = p^{(2)}_{xz},
\end{equation}
and for any $z_i,z_j\in S_2(x)$ such that $z_i\neq z_j$,
\begin{equation} \label{eq:Gamma_2_zz_off}
(4\Gamma_2(x))_{z_iz_j}=0.
\end{equation}

The Schur's complement $$Q(x):=\Gamma_2(x)_{\hat{1}}/ \Gamma_2(x)_{S_2,S_2} =\Gamma_2(x)_{S_1,S_1}-\Gamma_2(x)_{S_1,S_2} \Gamma_2(x)_{S_2,S_2}^{-1}\Gamma_2(x)_{S_2,S_1}$$ is the result of folding the matrix in \eqref{eq:Gamma_2_1hat} into the upper-left block.

For all $y_i,y_j\in S_1(x)$ (with possibly $i=j$), the $(y_i,y_j)$-entry of $\Gamma_2(x)_{S_1,S_2}\Gamma_2(x)_{S_2,S_2}^{-1}\Gamma_2(x)_{S_2,S_1}$ can be computed from \eqref{eq:Gamma_2_yz}, \eqref{eq:Gamma_2_zz} and \eqref{eq:Gamma_2_zz_off} as
\begin{align}\label{eq:Schur2}
\left(\Gamma_2(x)_{S_1,S_2}\Gamma_2(x)_{S_2,S_2}^{-1}\Gamma_2(x)_{S_2,S_1}\right)_{y_i y_j}
= \sum_{z\in S_2(x)} \frac{p_{xy_i}p_{y_i z} \cdot p_{xy_j}p_{y_j z}}{p^{(2)}_{xz}}.
\end{align}

Combining the above equation with \eqref{eq:Gamma_2_yy} and \eqref{eq:Gamma_2_yy_off}, we obtain the entries of $Q(x)$ as follows.

For $y\in S_1(x)$,
\begin{eqnarray} \label{eq:Q_ondiag}
	Q(x)_{yy} &=& \frac{1}{2}p_{xy}^{2}+\frac{3}{4}p_{xy}p_{yx} - \frac{1}{4}\frac{d_x}{\mu_x}p_{xy} +\frac{3}{4}p_{xy}\sum_{z\in S_{2}(x)}p_{yz} \\
	&&+\frac{1}{4}\sum_{y'\in S_1(x)}\left( 3p_{xy}p_{yy'} + p_{xy'}p_{y'y} \right) - \sum_{z\in S_2(x)} \frac{p_{xy}^2p_{y z}^2}{p^{(2)}_{xz}} \nonumber.
\end{eqnarray}

For $y_i,y_j\in S_1(x)$ such that $y_i\neq y_j$,
\begin{eqnarray} \label{eq:Q_offdiag}
	Q(x)_{y_i y_j} &=& \frac{1}{2} p_{xy_i}p_{xy_j} -\frac{1}{2}p_{xy_i}p_{y_iy_j} -\frac{1}{2}p_{xy_j}p_{y_jy_i} - \sum_{z\in S_2(x)} \frac{p_{xy_i}p_{y_i z}p_{xy_j}p_{y_j z}}{p^{(2)}_{xz}}.
\end{eqnarray}

The curvature matrix $A_\infty(x)=2\diag(\mathbf{v}_0)^{-1}Q(x)\diag(\mathbf{v}_0)^{-1}$ with \[\mathbf{v}_0(x) := (\sqrt{p_{xy_1}} \ \sqrt{p_{xy_2}} \ ... \ \sqrt{p_{xy_m}})^\top\] has its entries equal to
\begin{equation} \label{eq:A_infty_entries}
A_\infty(x)_{y_iy_j}= \frac{2}{\sqrt{p_{xy_i}p_{xy_j}}} Q(x)_{y_i y_j}
\end{equation}
for all $y_i,y_j\in S_1(x)$ (with possibly $i=j$).

The generalised scalar curvature $S_{G,x}(N)=\trace (A_\infty - \frac{2}{N}\v_0\v_0^\top)=S_{G,x}(\infty)-\frac{2}{N}\frac{d_x}{\mu_x}$ can then be computed from \eqref{eq:Q_ondiag} as below:
\begin{align} \label{eq:scalar_compute}
S_{G,x}(\infty)
&= \left(1-\frac{m}{2}\right)\frac{d_x}{\mu_x} + \frac{3}{2}\sum_{y\in S_1(x)}p_{yx} + \frac{3}{2}\sum_{y\in S_1(x)}\sum_{z\in S_2(x)}p_{yz} \nonumber \\
&\phantom{=} + \frac{1}{2}\sum_{y\in S_1(x)}\sum_{y'\in S_1(x)}\left(3p_{yy'}+\frac{p_{xy'}p_{y'y}}{p_{xy}}\right) - 2\sum_{y\in S_1(x)}\sum_{z\in S_2(x)} p_{xy}\frac{p_{yz}^2}{p_{xz}^{(2)}}.
\end{align}

Next we analyse the structure of the matrix $Q(x)$ via certain Laplacians, extending results in \cite[Section 8]{CLP20}. Let $\Delta_{S_1(x)}$ be the Laplacian of the weighted graph with the vertex set $\{y_1,y_2,\ldots,y_m\}$, the vertex measure $\mu\equiv 1$, and the symmetric edge-weight function given by
\begin{equation*}
w^{S_1(x)}_{y_iy_j}:=\frac{1}{2}p_{xy_i}p_{y_iy_j}+\frac{1}{2}p_{xy_j}p_{y_jy_i}.
\end{equation*}
That is, for any function $f:\{y_1,y_2,\ldots,y_m\}\to \mathbb{R}$, we have
\begin{equation*}
\Delta_{S_1(x)}f(y_i)=\sum_{j\in [m]}w^{S_1(x)}_{y_iy_j}(f(y_j)-f(y_i)),
\end{equation*}
where we use the notation $[m]:=\{1,2,\ldots,m\}$. We observe that
\begin{equation*}\label{eq:DegS1}
\sum_{j\in [m]}w^{S_1(x)}_{y_iy_j}=\frac{1}{2}p_{xy_i}\sum_{y_j\in S_1(x)}p_{y_iy_j}+\frac{1}{2}\sum_{y_j\in S_1(x)}p_{xy_j}p_{y_jy_i}.
\end{equation*}
We then derive from \eqref{eq:Gamma_2_yy} and \eqref{eq:Gamma_2_yy_off} that
\begin{equation}\label{eq:Gamma2S1S1Laplacian}
\Gamma_2(x)_{S_1,S_1}=-\Delta_{S_1(x)}+\frac{1}{2}\left(\Delta(x)\Delta(x)^\top\right)_{\hat{1}}-\frac{1}{4}\frac{d_x}{\mu_x}\diag(\Delta(x)_{S_1}) +\diag(\mathbf{w}_1(x)),
\end{equation}
where $\Delta_{S_1(x)}$ stands here for the corresponding Laplacian matrix and $\mathbf{w}_1(x)$ denotes the $m$-dimensional vector with the $i$-th entry given by
\begin{equation*}
\frac{3}{4}p_{xy_i}(p^-(y_i)+p^+(y_i))+\frac{1}{4}\sum_{y'\in S_1(x)}(p_{xy_i}p_{y_iy'}-p_{xy'}p_{y'y_i}).
\end{equation*}
In the above we use the notations $p^-(y)=p_{yx}$ and $p^+(y)=\sum_{z\in S_2(x)}p_{yz}$ for $y\in S_1(x)$.

Let $\Delta_{S_1'(x)}$ be the Laplacian on the weighted graph with the vertex set $\{y_1,y_2,\ldots,y_m\}$, the vertex measure $\mu\equiv 1$, and the symmetric edge-weight function given by
\begin{equation*}
w^{S_1'(x)}_{y_iy_j}:=\sum_{z\in S_2(x)}\frac{p_{xy_i}p_{y_iz}p_{xy_j}p_{y_jz}}{p_{xz}^{(2)}}\,\,\text{for }\,\,i\neq j,\,\,\text{and }\,\,0\,\,\text{otherwise}.
\end{equation*}
As an operator, we have for any function $f:\{y_1,y_2,\ldots,y_m\}\to \mathbb{R}$,
\begin{equation*}
\Delta_{S_1'(x)}f(y_i)=\sum_{j\in [m]}w^{S_1'(x)}_{y_iy_j}(f(y_j)-f(y_i)).
\end{equation*}
Observe that
\begin{equation*}
\sum_{j\in [m]}w^{S_1'(x)}_{y_iy_j}=p_{xy_i}p^+(y_i)-\sum_{z\in S_2(x)}\frac{p_{xy_i}^2p_{y_iz}^2}{p_{xz}^{(2)}}.
\end{equation*}
We then derive form \eqref{eq:Schur2} that
\begin{equation}\label{eq:Schur2Laplacian}
\Gamma_2(x)_{S_1,S_2}\Gamma_2(x)^{-1}_{S_2,S_2}\Gamma_2(x)_{S_2,S_1}=\Delta_{S_1'(x)}+\diag((p_{xy_1}p^+(y_1)\,\cdots \,p_{xy_m}p^+(y_m))^\top).
\end{equation}
Combing \eqref{eq:Gamma2S1S1Laplacian} and \eqref{eq:Schur2Laplacian}, we arrive at
\begin{equation}\label{eq:QLaplacian}
Q(x)=-\Delta_{S_1''(x)}+\frac{1}{2}\left(\Delta(x)\Delta(x)^\top\right)_{\hat{1}} -  \frac{1}{4}\frac{d_x}{\mu_x}\diag(\Delta(x)_{S_1})
+\diag(\mathbf{w}(x)),
\end{equation}
where $\Delta_{S_1''(x)}:=\Delta_{S_1(x)}+\Delta_{S_1'(x)}$ and $\mathbf{w}(x)$ is the $m$-dimensional vector with the $i$-th entry given by
\begin{equation*}
\frac{3}{4}p_{xy_i}p^-(y_i)-\frac{1}{4}p_{xy_i}p^+(y_i)+\frac{1}{4}\sum_{y'\in S_1(x)}(p_{xy_i}p_{y_iy'}-p_{xy'}p_{y'y_i}).
\end{equation*}


In terms of the Laplacian $\Delta_{S_1''(x)}$, we have the following identity from \eqref{eq:QLaplacian}
\begin{align}\label{eq:curMatrixLaplacian}
A_{\infty}(x)=&-2\diag(\mathbf{v}_0)^{-1}\Delta_{S_1''(x)}\diag(\mathbf{v}_0)^{-1}+\mathbf{v}_0\mathbf{v}_0^\top-\frac{1}{2}\frac{d_x}{\mu_x}{\rm Id}\\
&+\frac{1}{2}\diag\left[\left(
               \begin{array}{c}
                 3p^-(y_1)-p^+(y_1)+\sum_{y'\in S_1(x)}\frac{p_{xy_1}p_{y_1y'}-p_{xy'}p_{y'y_1}}{p_{xy_1}} \\
                  \vdots   \\
                 3p^-(y_m)-p^+(y_m)+\sum_{y'\in S_1(x)}\frac{p_{xy_m}p_{y_my'}-p_{xy'}p_{y'y_m}}{p_{xy_m}}\\
               \end{array}
             \right)\right].
 \nonumber
\end{align}

We conclude this Appendix with the following Lemma.
\begin{lemma}\label{lem:K0infty}
Let $G=(V,w,\mu)$ be a weighted graph. Then we have for any $x\in V$,
\[ \frac{\mathbf{v}_0(x)^\top A_\infty(x) \mathbf{v}_0(x)}{\mathbf{v}_0(x)^\top \mathbf{v}_0(x)} = \frac{1}{2}\left( \frac{d_x}{\mu_x}+ 3\frac{\mu_x}{d_x} p_{xx}^{(2)} - \frac{\mu_x}{d_x} \sum_{z\in S_2(x)} p_{xz}^{(2)}\right) =:\K^{0}_{\infty}(x). \]
\end{lemma}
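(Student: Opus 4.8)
The plan is to reduce the quadratic form $\mathbf{v}_0^\top A_\infty \mathbf{v}_0$ to the sum of \emph{all} entries of $Q(x)$, and then to evaluate that entry-sum directly from the explicit formulas \eqref{eq:Q_ondiag} and \eqref{eq:Q_offdiag}. The first step exploits the definition $A_\infty(x) = 2\diag(\mathbf{v}_0)^{-1} Q(x) \diag(\mathbf{v}_0)^{-1}$ together with the observation that $\diag(\mathbf{v}_0)^{-1}\mathbf{v}_0 = \mathbbm{1}$, the all-ones vector (since the $i$-th entry of $\mathbf{v}_0$ is $\sqrt{p_{xy_i}}$). Hence
\[ \mathbf{v}_0^\top A_\infty \mathbf{v}_0 = 2\,\mathbbm{1}^\top Q(x)\, \mathbbm{1} = 2\sum_{i,j} Q(x)_{y_iy_j}, \]
while $\mathbf{v}_0^\top \mathbf{v}_0 = \sum_i p_{xy_i} = d_x/\mu_x$. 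The lemma thus reduces to showing $\sum_{i,j} Q(x)_{y_iy_j} = \tfrac14(d_x/\mu_x)^2 + \tfrac34 p_{xx}^{(2)} - \tfrac14\sum_{z\in S_2(x)} p_{xz}^{(2)}$, after which multiplication by $2\mu_x/d_x$ recovers $\K^0_\infty(x)$.

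For the main computation I would split $\sum_{i,j} Q_{y_iy_j}$ into the diagonal contribution (from \eqref{eq:Q_ondiag}) and the off-diagonal contribution (from \eqref{eq:Q_offdiag}), and group the resulting terms by type. The ``degree'' terms combine cleanly: $\tfrac12\sum_i p_{xy_i}^2 + \tfrac12\sum_{i\neq j}p_{xy_i}p_{xy_j} = \tfrac12(\sum_i p_{xy_i})^2 = \tfrac12(d_x/\mu_x)^2$, which together with the term $-\tfrac14\tfrac{d_x}{\mu_x}\sum_i p_{xy_i}$ yields $\tfrac14(d_x/\mu_x)^2$; the term $\tfrac34\sum_i p_{xy_i}p_{y_ix}$ is exactly $\tfrac34 p_{xx}^{(2)}$; and $\tfrac34\sum_i p_{xy_i}\sum_{z\in S_2}p_{y_iz}$ equals $\tfrac34\sum_{z\in S_2}p_{xz}^{(2)}$, using $p_{xz}^{(2)}=\sum_{y\in S_1(x)}p_{xy}p_{yz}$ for $z\in S_2(x)$.

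The second, and most satisfying, grouping is the cancellation of all purely $S_1$-internal terms. The diagonal formula contributes $\tfrac14\sum_i\sum_{y'}(3p_{xy_i}p_{y_iy'} + p_{xy'}p_{y'y_i})$, which (since $p_{y_iy_i}=0$, and after relabeling the dummy index in the second summand) equals $\sum_{i\neq j}p_{xy_i}p_{y_iy_j}$; the off-diagonal formula contributes $-\tfrac12\sum_{i\neq j}(p_{xy_i}p_{y_iy_j}+p_{xy_j}p_{y_jy_i}) = -\sum_{i\neq j}p_{xy_i}p_{y_iy_j}$, so these two sum to zero. The third grouping handles the Schur-complement terms: the diagonal piece $-\sum_i\sum_z \tfrac{p_{xy_i}^2 p_{y_iz}^2}{p_{xz}^{(2)}}$ and the off-diagonal piece $-\sum_{i\neq j}\sum_z \tfrac{p_{xy_i}p_{y_iz}p_{xy_j}p_{y_jz}}{p_{xz}^{(2)}}$ combine into
\[ -\sum_{z\in S_2(x)}\frac{1}{p_{xz}^{(2)}}\Bigl(\sum_i p_{xy_i}p_{y_iz}\Bigr)^2 = -\sum_{z\in S_2(x)}p_{xz}^{(2)}, \]
recognizing the perfect square and again that $p_{xz}^{(2)}=\sum_i p_{xy_i}p_{y_iz}$.

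Assembling the three groupings gives $\sum_{i,j}Q_{y_iy_j} = \tfrac14(d_x/\mu_x)^2 + \tfrac34 p_{xx}^{(2)} + (\tfrac34-1)\sum_{z\in S_2}p_{xz}^{(2)}$, and multiplying by $2\mu_x/d_x$ produces the stated $\K^0_\infty(x)$. The only genuine obstacle is bookkeeping: tracking which sums run over $S_1\times S_1$ versus $S_1\times S_2$, and correctly using symmetry under swapping dummy indices to realize the within-sphere cancellation. The two conceptual tricks --- the perfect-square factorization collapsing the Schur terms and the exact cancellation of the $S_1$-internal terms --- are precisely what make an otherwise unwieldy sum telescope to three clean terms.
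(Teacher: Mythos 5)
Your proof is correct and takes essentially the same route as the paper: both reduce the quotient to $2\sum_{i,j}Q(x)_{y_iy_j}$ divided by $d_x/\mu_x$ and then evaluate that entry-sum, with all your groupings (the degree terms collapsing to $\tfrac14(d_x/\mu_x)^2$, the exact cancellation of the $S_1$-internal terms, and the perfect-square collapse of the Schur terms to $-\sum_z p_{xz}^{(2)}$) checking out. The only difference is organizational: the paper routes the same cancellations through the Laplacian decomposition \eqref{eq:QLaplacian}, using that the Laplacian entries and the antisymmetric sum $\sum_i\sum_{y'}(p_{xy_i}p_{y_iy'}-p_{xy'}p_{y'y_i})$ both vanish, whereas you work directly from \eqref{eq:Q_ondiag} and \eqref{eq:Q_offdiag}.
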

\begin{proof}
By \eqref{eq:A_infty_entries}, we obtain
\begin{equation*}
\mathbf{v}_0(x)^\top A_\infty(x) \mathbf{v}_0(x)=\sum_{i,j}\sqrt{p_{xy_i}p_{xy_j}}A_\infty(x)_{y_iy_j}=2\sum_{i,j}Q(x)_{y_iy_j}.
\end{equation*}
We will continue the calculation by applying \eqref{eq:QLaplacian}. We observe the following facts:
\begin{equation*}
\sum_{i,j}\Delta_{S_1''}(x)_{y_iy_j}=0 \,\,\,\text{and}\,\,\,\sum_{i}\sum_{y'\in S_1(x)}(p_{xy_i}p_{y_iy'}-p_{xy'}p_{y'y_i})=0.
\end{equation*}
Furthermore, we derive from \eqref{eq:DeltaDeltaT} that $\sum_{i,j}\left(\left(\Delta(x)\Delta(x)^\top\right)_{\hat{1}}\right)_{y_iy_j}=\left(\frac{d_x}{\mu_x}\right)^2$.
Therefore, applying \eqref{eq:QLaplacian} yields that
\begin{align*}
2\sum_{i,j}Q(x)_{y_iy_j}=&\frac{1}{2}\left(\frac{d_x}{\mu_x}\right)^2+\frac{1}{2}\sum_i\left(3p_{xy_i}p^-(y_i)-p_{xy_i}p^+(y_i)\right) \\
=&\frac{1}{2}\left(\frac{d_x}{\mu_x}\right)^2+\frac{1}{2}\left(3p_{xx}^{(2)}-\sum_{z\in S_2(x)}p_{xz}^{(2)}\right).
\end{align*}
Recalling that $\mathbf{v}_0(x)^\top \mathbf{v}_0(x)=\frac{d_x}{\mu_x}$, we finish the proof of this lemma.
\end{proof}

\section*{Acknowledgement}
Shiping Liu is supported by the National Key R and D Program of China 2020YFA0713100 and the National Natural Science Foundation of China (No. 12031017). Supanat Kamtue is supported by the Thai Institute for Promotion of Teaching Science and Technology.


\begin{thebibliography}{99}
\bibitem{Bakry94} D. Bakry, L'hypercontractivit\'{e} et son utilisation en th\'{e}orie des semigroupes, In: Lectures on Probability Theory, Lecture Notes in Math. 1581, P. Bernard (Eds.), Springer, Berlin, Heidelberg, 1994, pp. 1--114.
\bibitem{BE85} D. Bakry and M. \'Emery, Diffusions hypercontractives, In: S\'eminaire de
  probabilit\'es, XIX, 1983/84, Lecture Notes in Math. 1123,
  J. Az\'{e}ma and M. Yor (Eds.), Springer, Berlin, 1985,
  pp. 177--206.
\bibitem{BGL} D. Bakry, I. Gentil and M. Ledoux, Analysis and geometry of Markov diffusion operators, Grundlehren der Mathematischen Wissenschaften 348, Springer, 2014.
\bibitem{BHLLMY13} F. Bauer, P. Horn, Y. Lin, G. Lippner, D. Mangoubi, S.-T. Yau, Li-Yau inequality on graphs, J. Differential Geom. 99 (2015), no. 3, pp. 359--405.
\bibitem{BV04} S. Boyd and L. Vandenberghe, Convex Optimization, First edition. Cambridge University Press, 2004.
\bibitem{Car86} D. Carlson, What are Schur Complements, Anyway? Linear Algebra Appl. 74 (1986), 257--275.
\bibitem{CLY14} F. R. K. Chung, Y. Lin, S.-T. Yau, Harnack inequalities for graphs with non-negative Ricci curvature, J. Math. Anal. Appl. 415 (2014), 25--32.
\bibitem{CKLLS20} D. Cushing, R. Kangaslampi, V. Lipi\"ainen, S. Liu and G. W. Stagg, The Graph Curvature Calculator and the curvatures of cubic graphs, Exp. Math. (2019), 1--13, doi.org/10.1080/10586458.2019.1660740.
\bibitem{CKKLP20} D. Cushing, S. Kamtue, R. Kangaslampi, S. Liu and N. Peyerimhoff, Curvatures, graph products and Ricci flatness, J. Graph Theory 96 (2021), no. 4, 522--553.
\bibitem{CKPW20} D. Cushing, S. Kamtue, N. Peyerimhoff and L. Watson May, Quartic graphs which are Bakry-\'{E}mery curvature sharp, Discrete Math. 343 (2020), no. 3, 111767.
\bibitem{CLMP20} D. Cushing, S. Liu, F. M\"unch and N. Peyerimhoff, Curvature calculations for antitrees, in: Analysis and geometry on graphs and manifolds, London Mathematical Society Lecture Notes Series, 461, M. Keller, D. Lenz, and R. Wojciechowski (Eds.), Cambridge University Press, 2020, pp. 21--54.
\bibitem{CLP20} D. Cushing, S. Liu and N. Peyerimhoff, Bakry-\'{E}mery curvature functions on graphs, Canad. J. Math. 72 (2020), no. 1, 89--143.
\bibitem{DKZ17} D. Dier, M. Kassmann and R. Zacher, Discrete versions of the Li-Yau gradent estimate, arXiv: 1701:04807.
\bibitem{Elworthy91} K. D. Elworthy, Manifolds and graphs with mostly positive curvatures, in Stochastic analysis and applications (Lisbon, 1989), Progr. Probab., 26, A. B. Cruzeiro and J. C. Zambrini (Eds.), Birkh\"{a}user, 1991, pp. 96--110.

\bibitem{FS18} M. Fathi and Y. Shu, Curvature and transport inequalities for Markov chains in discrete spaces, Bernoulli 24 (2018), no. 1, 672--698.
\bibitem{Gal19} J. Gallier, The Schur complement and symmetric positive semidefinte (and definite) matrices, August 24, 2019. https://www.cis.upenn.edu/~jean/schur-comp.pdf
\bibitem{GL17} C. Gong and Y. Lin, Equivalent properties for CD inequalities on graphs with unbounded Laplacians, Chinese Ann. Math. Ser. B. 38 (2017), 1059--1070.
\bibitem{GLLY19} C. Gong, Y. Lin, Shuang Liu and S.-T. Yau, Li-Yau inequality for unbounded Laplacian on graphs, Adv. Math. 357 (2019), 106822.
\bibitem{HLLY19} P. Horn, Y. Lin, Shuang Liu and S.-T. Yau, Volume doubling, Poincar\'e inequality and Gaussian heat kernel estimate for nonnegative curvature graphs, J. Reine Angew. Math. 757 (2019), 89--130.
\bibitem{Hua19} B. Hua, Liouville theorem for bounded harmonic functions on manifolds and graphs satisfying non-negative curvature dimension condition, Calc. Var. Partial Differential Equations 58 (2019), no. 2, Article 42.
\bibitem{HL17} B. Hua and Y. Lin, Stochastic completeness for graphs with curvature dimension conditions, Adv. Math. 306 (2017), 279--302.
\bibitem{HL19} B. Hua and Y. Lin, Graphs with large girth and nonnegative curvature dimension condition, Comm. Anal. Geom. 27 (2019), no. 3, 619--638.
\bibitem{HM17} B. Hua and F. M\"unch, Ricci curvature on birth-death processes, arXiv: 1712.01494.
\bibitem{HMW19} B. Hua, F. M\"unch, and R. K. Wojciechowski, Coverings and the heat equation on graphs: stochastic incompleteness, the Feller property, and uniform transience, Trans. Amer. Math. Soc. 372 (2019), no. 7, 5123--5151.
\bibitem{Jost} J. Jost, Riemannian geometry and geometric analysis, Seventh edition. Universitext, Springer, 2017.
\bibitem{JL14} J. Jost and S. Liu, Ollivier's Ricci curvature, local clustering and curvature-dimension inequalities on graphs, Discrete  Comput. Geom. 51 (2014), no. 2, 300--322.
\bibitem{KM18} M. Keller and F. M\"unch, Gradient estimates, Bakry-Emery Ricci curvature and ellipticity for unbounded graph Laplacians, arXiv:1807.10181.
\bibitem{KMY17} M. Kempton, F. M\"unch, and S.-T. Yau, Relationships between cycle spaces, gain graphs, graph coverings, fundamental groups, path homology, and graph curvature, arXiv:1710.01264.
\bibitem{KKRT16} B. Klartag, G. Kozma, P. Ralli, and P. Tetali, Discrete curvature and abelian groups, Canad. J. Math. 68 (2016), 655--674.
\bibitem{Krantz-Parks} S. G. Krantz and H. R. Parks, The implicit function theorem, History, theory, and applications, Modern Birkh\"{a}user Classics, Reprint of the 2003 edition, Birkh\"{a}user/Springer, New York, 2013.
\bibitem{LM18} S. Lakzian and Z. Mcguirk, Global Poincar\'e inequality on graphs via conical curvature-dimension conditions, Anal. Geom. Metr. Spaces 6 (2018), no. 1, 32--47.
\bibitem{LL15} Y. Lin and Shuang Liu, Equivalent properties of CD inequality on graph, Acta Math. Sin. Chinese Ser. 61 (2018), no. 3, 431--440.
\bibitem{LY10} Y. Lin and S.-T. Yau, Ricci curvature and eigenvalue estimate on locally finite graphs, Math. Res. Lett. 17 (2010), no. 2, 343--356.
\bibitem{LMP18} S. Liu, F. M\"unch, and N. Peyerimhoff, Bakry-Emery curvature and diameter bounds on graphs, Calc. Var. Partial Differential Equations 57 (2018), no. 2, Article 67.
\bibitem{LMP17} S. Liu, F. M\"unch, and N. Peyerimhoff, Rigidity properties of the hypercube via Bakry-Émery curvature, arXiv:1705.06789.
\bibitem{LMP19} S. Liu, F. M\"unch, and N. Peyerimhoff, Curvature and higher order Buser inequalities for the graph connection Laplacian, SIAM J. Discrete Math. 33 (2019), no.1, 257--305.
\bibitem{LMPR19} S. Liu, F. M\"unch, N. Peyerimhoff and C. Rose, Distance bounds for graphs with some negative Bakry-Émery curvature, Anal. Geom. Metr. Space 7 (2019), no. 1, 1--14.
\bibitem{LP18} S. Liu and N. Peyerimhoff, Eigenvalue ratios of nonnegatively curved graphs, Comb. Probab. Comput. 27 (2018), no. 5, 829--850.
\bibitem{L19} Shuang Liu, Buser's inequality on infinite graphs, J. Math. Anal. Appl., 475 (2019), no. 2, 1416--1426.
\bibitem{Marden} M. Marden, Geometry of polynomials, Second edition. Mathematical Surveys, No. 3, American Mathematical Society, Providence, R.I, 1966.
\bibitem{MST20} J. M. Maz\'{o}n, M. Solera, and J. Toledo, The heat flow on metric random walk spaces, J. Math. Anal. Appl. 483 (2020), no. 2, 123645.
\bibitem{Mun17} F. M\"unch, Remarks on curvature dimension conditions on graphs, Calc. Var. Partial Differential Equations 56 (2017), no. 1, Article 11.
\bibitem{Mun18} F. M\"{u}nch, Li-Yau inequality on finite graphs via non-linear curvature dimension conditions, J. Math. Pures Appl. 120 (2018), pp. 130--164.
\bibitem{Mun19} F. M\"{u}nch, Li-Yau inequality under $CD(0, n)$ on graphs, arXiv:1909:10242.
\bibitem{MR20} F. M\"{u}nch and C. Rose, Spectrally positive Bakry-\'Emery Ricci curvature on graphs, J. Math. Pures Appl. 143 (2020), 334--344.
\bibitem{Petersen} P. Petersen, Riemannian geometry, Third edition. Graduate Texts in Mathematics 171, Springer, 2016.
\bibitem{Salez211} J. Salez, Sparse expanders have negative curvature, arXiv:2101.08242.
\bibitem{Salez212} J. Salez, Cutoff for non-negatively curved Markov chains, arXiv:2102.05597.
\bibitem{Schmuckenschlager98} M. Schmuckenschl\"{a}ger, Curvature of nonlocal Markov generators, in Convex geometric analysis (Berkeley, CA, 1996), Math. Sci. Res. Inst. Publ., 34, K. Ball and V. Milman (Eds.), Cambridge University Press, 1999, pp. 189--197.
\bibitem{Siconolfi-proceeding} V. Siconolfi, Coxeter groups, graphs and Ricci curvature, Proceedings of the 32nd Conference on Formal Power Series and Algebraic Combinatorics, S\'eminaire Lotharingien de Combinatoire {\bf 84B} (2020). Article \#67, 12 pp.
\bibitem{Siconolfi} V. Siconolfi, Ricci curvature, graphs and eigenvalues, arXiv:2102.10134.
\bibitem{SWZ19} A. Spener, F. Weber, and R. Zacher, Curvature-dimension inequalities for non-local operators in the discrete setting, Calc. Var. Partial Differential Equations 58 (2019), no. 5, Article 171.
\bibitem{Topping} P. Topping, Lectures on the {R}icci flow, London Mathematical Society Lecture Notes Series, 325, Cambridge University Press, 2006.
\bibitem{Villani} C. Villani, Optimal Transport: old and new, Grundlehren der Mathematischen Wissenschaften, vol. 338, Springer-Verlag, Berlin, 2009.


%
%
%

%
%
%
%
%

%
%
%
%
%
%
%

%

%

%
%

%
%

%
%

%
%
%
%
%
%
%

%
%



\end{thebibliography}
\end{document}